\theoremstyle{plain}
\newtheorem{thm}{Theorem}[section]
\newtheorem{lem}[thm]{Lemma}
\newtheorem{prop}[thm]{Proposition}
\newtheorem{cor}[thm]{Corollary}
\newtheorem{remark}[thm]{Remark}
\numberwithin{equation}{section}
\definecolor{tl}{rgb}{0.7,0.1,0.2}
\def \NN {\mathbb{N}}
\def \RR {\mathbb{R}}
\def \BB {\mathbb{B}}
\def \s {\mathbb{S}}
\begin{document}

\title[On the Green function and Poisson integrals]{On the Green function and Poisson integrals of the Dunkl Laplacian}

\subjclass[2010]{Primary 31B05, 31B25, 60J50; Secondary 42B30, 51F15.}
\keywords{Dunkl Laplacian, Green function, Newton kernel, Poisson kernel, Hardy-stein identity}

\author{Piotr Graczyk}
\address{Piotr Graczyk, LAREMA, Universit\'e d'Angers, 2 Bd Lavoisier, 49045 Angers Cedex 1, France}
\email{graczyk@math.univ-angers.fr}

\author{Tomasz Luks}
\address{Tomasz Luks, Institut f\"ur Mathematik, Universit\"at Paderborn, Warburger Strasse 100, D-33098 Paderborn, Germany}
\email{tluks@math.uni-paderborn.de}

\author{Margit R\"osler}
\address{Margit R\"osler, Institut f\"ur Mathematik, Universit\"at Paderborn, Warburger Strasse 100, D-33098 Paderborn, Germany}
\email{roesler@math.upb.de}

\maketitle
\selectlanguage{english}

\begin{abstract}
We prove the existence and study properties of the Green function of the unit ball for the Dunkl Laplacian $\Delta_k$ in $\RR^d$. As applications we derive the Poisson-Jensen formula for $\Delta_k$-subharmonic functions and Hardy-Stein identities for the Poisson integrals of $\Delta_k$. We also obtain sharp estimates of the Newton potential kernel, Green function and Poisson kernel in the rank one case in $\RR^d$. These estimates contrast sharply with the well-known results in the potential theory of the classical Laplacian.
\end{abstract}
\selectlanguage{english}

\section{Introduction}

Dunkl operators are differential reflection operators associated with finite reflection groups which generalize the usual partial derivatives as well as the invariant differential operators of Riemannian symmetric spaces. They play an  important role in harmonic analysis and the study of special functions of several variables. Among other applications, Dunkl operators are employed in the description of quantum integrable models of Calogero-Moser type, see e.g. \cite{vDV}. Also, there are stochastic processes associated with  Dunkl Laplacians which generalize Dyson's Brownian motion model, see e.g. \cite{GY, VR}. Recently, the potential theory of the Dunkl Laplacian $\Delta_k$ has found increasing attention in view of many interesting open problems and the need of developing new techniques, as many standard methods known from the case of diffusion operators do not apply, see, e.g., \cite{GaRe1,MaYo1,Re}. In the present paper we study the properties of one of the fundamental objects in the potential theory of $\Delta_k$: the Green function $G_k(x,y)$ of the unit ball $\BB$ in $\RR^d$. The behavior and estimates of this function and its generalizations for bounded smooth domains were intensively studied in the case of the classical Laplacian \cite{Bog,Wid,Z1,Z2}, more general diffusion operators \cite{An,Ar,CrZ,GrWi,KiSo,LSW}, as well as nonlocal operators \cite{BJ,CheS,GrzR,J,Ku}.

Our first result, Theorem \ref{thm:GreenBall}, establishes the existence and an integral formula for $G_k(x,y)$. A more convenient two-sided bound of $G_k(x,y)$ is given in Theorem \ref{thm:GreenEst}. We also prove a standard relation between $G_k(x,y)$ and the Poisson kernel $P_k(x,y)$ of $\BB$ for $\Delta_k$, see Proposition~\ref{thm:GreenPoisson}. As applications of Theorem~\ref{thm:GreenBall} we obtain the Poisson-Jensen formula for $\Delta_k$-subharmonic functions and Hardy-Stein identities for $\Delta_k$-harmonic functions on $\BB$, see Theorem~\ref{thm:JensPois} and Theorem~\ref{thm:HardyStein}. This leads to an equivalent characterization of the Hardy spaces of $\Delta_k$ on $\BB$ in the spirit of \cite{BDL}. We remark that the general integral representation \eqref{GreenBallDiff} of $G_k(x,y)$ and the estimate of Theorem \ref{thm:GreenEst} involve the representing measure for the intertwining operator whose structure depends strongly on the underlying root system. Note that explicit formulas for the representing measure are known only in a few particular cases, and the question whether it always admits a Lebesgue density is a challenging open problem. However, the available results together with Theorem \ref{thm:GreenEst} allow us to 
derive explicit two-sided bounds of the Newton kernel $N_k(x,y)$, the Green function $G_k(x,y)$ 
and the Poisson kernel $P_k(x,y)$ for $\Delta_k$ in the rank one case in $\RR^d$, see Theorem \ref{thm:NewtonEst}, Theorem \ref{thm:GreenEstR1}, and Corollary \ref{thm:Poisson}. The obtained estimates contrast sharply with the classical results in the potential theory of the Laplacian 
$\Delta$ or more general diffusion operators. The main novelties in the present setting are additional singularities of $N_k(x,y)$ and $G_k(x,y)$ in $x=gy$ in dimensions higher than 3 ($g$ is in the associated reflection group $W$) and the dependence of the estimate of $N_k(x,y)$, $G_k(x,y)$ and $P_k(x,y)$ on the distance to the boundary of the Weyl chamber. This makes the obtained asymptotics more complex than in the case of diffusion operators, in particular these for the Green function $G_k(x,y)$. Deriving analogous two-sided bounds in the setting of any other root system is an interesting open problem, and available informations about the representing measure for the intertwining operator are in this case essential. We should note that the existence of singularities of the Newton kernel $N_k(\cdot,y)$ on the orbit $W.y$ has recently been discussed in the case of an orthogonal root system, see \cite[Proposition 2.59]{Re}. 

The paper is organized as follows. In Section 2 we give basic definitions and list some useful facts in the theory of Dunkl operators. In Section 3 we prove the existence and study properties of $G_k(x,y$). In Section 4 we prove the Poisson-Jensen formula and Hardy-Stein identites. In Section 5 we derive sharp estimates of $N_k(x,y)$, $G_k(x,y)$ and $P_k(x,y)$ in the rank one case in $\RR^d$.

\section{Preliminaries}

For details on the following, see \cite{Du0}, \cite{Du1}, \cite{Ro} and, for a general overview, \cite{DuX} or \cite{Ro3}. 
Let $R$ be a  root system in $\mathbb R^d$ (equipped with the usual scalar product and Euclidean norm $|\cdot|$), and let $W$ be the associated finite reflection
group. The root system $R$ needs not be crystallographic and  $W$ is not required to be effective, i.e.
$\text{span}_\mathbb R R$ may be a proper subspace of $\mathbb R^d$. The dimension of 
$\text{span}_\mathbb R R$ is called the rank of $R$.  An important example is $R=A_{d-1} = \{ \pm(e_i-e_j): 1\leq i < j \leq d\}\subset \mathbb R^d$ with $W=S_d$, the symmetric group in $d$ elements. 
We  fix a nonnegative multiplicity function $k$ on $R$, i.e. $k: R\to [0, \infty)$ is $W$-invariant. 
The (rational) Dunkl operators associated with $R$ and $k$ are given by
\[ T_\xi f(x)  = \partial_\xi f(x)  + \sum_{\alpha\in R_+} 
   k(\alpha)\,\langle\alpha, \xi\rangle\, 
    \frac{f(x) - f(\sigma_\alpha x)}{\langle\alpha, x\rangle},
 \quad\xi\in \mathbb R^d,\] 
where $R_+$ denotes an (arbitrary) positive subsystem of $R$. For fixed $R$ and $k$, these operators commute. Moreover, there is a unique linear isomorphism $V_k$ on the space of polynomial functions in $d$ variables, called the intertwining operator, which preserves the degree of homogeneity, is normalized by $V_k(1)=1$  and intertwines  the Dunkl operators with the usual partial derivatives:
\[ T_\xi V_k = V_k \partial_\xi \quad \text{ for all } \xi \in \mathbb R^d.\]
The Dunkl Laplacian is defined by 
$$ \Delta_k := \sum_{i=1}^ d T_{\xi_i}^2 $$
with an (arbitrary) orthonormal basis $(\xi_i)_{ 1\leq i \leq d}$ of $\mathbb R^d$.  In explicit form,
$$
\Delta_k f(x)=\Delta f(x)+\sum_{\alpha\in R_+}k(\alpha)\left(\frac{\langle\nabla f(x),\alpha\rangle}{\langle\alpha,x\rangle}-\frac{f(x)-f(\sigma_\alpha(x))}{\langle\alpha,x\rangle^2}\right),
$$
where $\Delta$ is the usual Laplacian on $\mathbb R^d$. 
For $x\in \RR^d$   denote  by $C(x)$ the convex hull of the Weyl group
orbit $W.x$ of $x$ in $\RR^d.$
The intertwining operator $V_k$ has the integral representation
\begin{equation}\label{intertwiner}
V_kf(x)=\int_{C(x)} f(z) d\mu_x^k(z),
\end{equation}
where $\mu_x^k$ is a probability measure on $C(x).$ 
The measures $\mu_x^k$ satisfy 
\[\mu_{rx}^k(A) = \mu_x^k(r^{-1}A) \]
for all $r>0$ and Borel sets $A\subseteq\mathbb R^d$. 
In \cite{T}, it was  
deduced from formula \eqref{intertwiner} that $V_k$ 
 establishes a homeomorphism of $C^\infty(\mathbb R^d)$ with its usual 
Fr\'{e}chet 
space topology.

In the rank one case $R = \{\pm 1\}\subset \mathbb R$, the representation \eqref{intertwiner} is explicitly known 
(\cite[Theorem 5.1]{Du1}); it is given by
\begin{equation}\label{V_dimension_one} 
V_kf(x) = c_k \int_{-1}^1 f(tx)(1-t)^{k-1}(1+t)^k\, dt\quad \text{ with } 
c_k = \frac{ \Gamma(k+1/2)}{\sqrt{\pi}\,\Gamma(k)}. \end{equation}

We shall employ the Dunkl-type generalized translation on $C^\infty(\mathbb R^d)$
which was defined in \cite{T} by
\[ \tau_yf(x) := V_k^xV_k^y (V_k^{-1}f)(x+y),\,\, x,y \in \mathbb R^d.\]
Here the superscript denotes the relevant variable. This translation satisfies 
$\,\tau_yf(x) = \tau_xf(y),$ and we shall use the notation
$f(x*_k y):= \tau_yf(x).$

\begin{lem} \begin{enumerate}\itemsep=-1pt
               \item[(i)] 
The representing measures $\mu_x^k$ satisfy
$\mu_{-x}^k(-A) = \mu_x^k(A).$ 
 \item[(ii)] Let $f\in C^\infty(\mathbb R^d)$ and write $f^-(x):= f(-x).$ Then 
 $f(-x*_k -y) = f^-(x*_ky).$
 \end{enumerate}
\end{lem}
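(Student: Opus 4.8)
The plan is to prove (i) first, since (ii) will follow from it by a direct computation with the definition of the generalized translation. For part (i), the natural approach is to exploit the uniqueness of the intertwining operator $V_k$. Recall that $V_k$ is characterized on polynomials by being degree-preserving, normalized by $V_k(1)=1$, and intertwining: $T_\xi V_k = V_k \partial_\xi$ for all $\xi$. I would introduce the "reflected" operator $\widetilde V f(x) := (V_k f^-)(-x)$, where $f^-(x) = f(-x)$, and check that $\widetilde V$ satisfies these same three defining properties. Normalization and degree-preservation are immediate. For the intertwining property, the key observation is that the Dunkl operators transform under $x \mapsto -x$ in a controlled way: from the explicit formula for $T_\xi$, one sees that $(T_\xi g)(-x) = -(T_\xi(g^-))(-x)\cdot(-1)$… more precisely, writing $S g(x) := g(-x)$, one has $T_\xi S = -S T_\xi$ (because $\partial_\xi$ anticommutes with $S$, and the difference quotient term $\langle\alpha,\xi\rangle\,(g(x)-g(\sigma_\alpha x))/\langle\alpha,x\rangle$ also picks up exactly one sign under $x\mapsto -x$, since $\sigma_\alpha$ commutes with $-\mathrm{id}$ and both numerator-difference and denominator change sign). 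Similarly $\partial_\xi S = -S\partial_\xi$. Combining these, one computes $T_\xi \widetilde V f = T_\xi S V_k f^- = -S T_\xi V_k f^- = -S V_k \partial_\xi f^- = -S V_k S S \partial_\xi f^- = \widetilde V (S\partial_\xi S) f = \widetilde V \partial_\xi f$, using $S\partial_\xi S = -\partial_\xi$. Hence $\widetilde V = V_k$ by uniqueness, i.e. $(V_k f^-)(-x) = V_k f(x)$ for all polynomials $f$, which by density/continuity extends to $f\in C^\infty$. Plugging in the integral representation \eqref{intertwiner}, $(V_kf^-)(-x) = \int_{C(-x)} f^-(z)\,d\mu_{-x}^k(z) = \int_{C(-x)} f(-z)\,d\mu_{-x}^k(z) = \int_{C(x)} f(w)\,d\mu_{-x}^k(-w)$ after the change of variables $w=-z$ (note $C(-x) = -C(x)$ since $W.(-x) = -W.x$), and this must equal $V_kf(x) = \int_{C(x)} f(w)\,d\mu_x^k(w)$ for all such $f$; comparing gives $\mu_{-x}^k(-A) = \mu_x^k(A)$.

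For part (ii), I would simply unwind the definition. By definition $f(-x*_k -y) = \tau_{-y}f(-x) = V_k^x V_k^y (V_k^{-1}f)\big|_{(\text{vars }{=}-x,-y)}(-x-y) = (V_k^u V_k^v g)(u,v)\big|_{u=-x,v=-y}$ evaluated at $u+v = -(x+y)$, where $g := V_k^{-1}f$. Using the reflection identity from part (i) in each variable separately—$(V_k^u h)(-u) = (V_k^u h^-)(u)$—one transports the two outer $V_k$'s past the sign change, at the cost of replacing $g(u+v)$ by $g(-(u+v))$, i.e. by $g^-$ evaluated at $x+y$. Then one identifies $(V_k^{-1}f)^- = V_k^{-1}(f^-)$, which again follows from the reflection identity for $V_k$ (apply $S$ on both sides of $(V_kf^-)(-x)=V_kf(x)$ and rearrange). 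Assembling the pieces yields $f(-x*_k-y) = V_k^xV_k^y\big(V_k^{-1}(f^-)\big)(x+y) = \tau_y f^-(x) = f^-(x*_k y)$.

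The main obstacle I anticipate is purely bookkeeping: carefully tracking the substitution $x\mapsto -x$ through the nonlocal difference-quotient term of $T_\xi$ and through the nested superscripted operators $V_k^x, V_k^y$ in the definition of $\tau_y$. One must be careful that the reflection identity $(V_kh)(-x) = (V_kh^-)(x)$ is applied in the correct variable and that the argument $x+y$ of $V_k^{-1}f$ is handled consistently; the identity $C(-x)=-C(x)$ and the $W$-invariance of $k$ (so that the reflected data define the same Dunkl operators) should be noted explicitly. Beyond that, the extension from polynomials to $C^\infty(\mathbb R^d)$ is justified by the fact (cited from \cite{T} in the excerpt) that $V_k$ is a homeomorphism of $C^\infty(\mathbb R^d)$, so density of polynomials suffices.
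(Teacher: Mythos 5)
Your proposal is correct and follows essentially the same route as the paper: both argue via the characterizing properties of the intertwining operator to obtain $V_k(f^-) = (V_k f)^-$, and then read off (i) from the integral representation and (ii) from the definition of $\tau_y$. The paper's proof is simply a three-sentence compression of exactly this argument (it records the observation $T_\xi(f^-) = (T_{-\xi}f)^-$, which is your $T_\xi S = -S T_\xi$, and then invokes the characterization of $V_k$), so you have supplied the details it omits rather than a different idea.
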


\begin{proof}
 It is immediate that the Dunkl operators satisfy $T_\xi(f^-) = (T_{-\xi}f)^-\,.$ 
 By the characterization  of $V_k$, it follows that $V_k(f^-) = (V_k f)^-\,.$  This implies both assertions.
\end{proof}

Of particular importance in our context will be translates of functions  $f$ on $\mathbb R^d$ which are radial,
that is $f(x)= \widetilde f(|x|)$ with $\widetilde f: [0,\infty) \to \mathbb C$. 
We recall from \cite{Ro2} that for each $x,y\in \mathbb R^d$ there exists a unique compactly supported
radial probability measure $\rho_{x,y}^k$ on $\mathbb R^d$ such that 
\begin{equation}\label{radialintegral} f(x*_ky) = \int_{\mathbb R^d} fd\rho_{x,y}^k\end{equation}
for all  $f\in C^\infty(\mathbb R^d).$
This can be written explicitly as
\begin{equation}\label{transexplicit} f(x*_ky) = 
\int_{C(y)}\widetilde f\bigl(\sqrt{|x|^2 + |y|^2 + 2 \langle x,z\rangle}\,\bigr) d\mu_y^k(z).\end{equation}
Notice that Dunkl translates of non-negative, smooth  radial functions are again non-negative.
Formula \eqref{radialintegral} allows to extend the generalized translation to measurable radial functions which are
either complex-valued and bounded or have values in $[0, \infty].$ 
We maintain the notations $\tau_yf(x)$ and $f(x*_k y)$  for functions from these classes.
In particular, for radial $f$  we have 
\begin{equation}\label{radialminus}
f(-x *_k -y ) = f(x*_ky).
\end{equation}
We put 
\[\gamma := \sum_{\alpha\in R_+}k_\alpha\,\]
and define the weight function $\omega_k$ on $\mathbb R^d$ by
$$
\omega_k(x):=\prod_{\alpha\in R_{+}}\left|\langle\alpha,x\rangle\right|^{2k_{\alpha}}.
$$ 

Let $\BB = \{ x\in \mathbb R^d: |x| < 1\}$ denote the open unit ball 
in $\mathbb R^d$ and let $\s=\partial\BB$ denote the unit sphere.
The Poisson kernel $P_k(x,y)$  of  $\BB$  for the Dunkl
Laplacian $\Delta_k$ was defined in \cite{Du1} as a reproducing kernel
for $\Delta_k$-harmonic polynomials. It can be written as
\begin{equation}\label{PoissonDunkl}
P_k(x,y)=V_k\left[\frac{1-|x|^2}{(1-2\langle x,\cdot\rangle +|x|^2)^{\gamma+
d/2}}\right] (y),\,\, x\in\BB, y\in\s.
\end{equation}
In view of identity \eqref{radialminus} with $f(x) = |x|^{-2\gamma -d}$, we obtain
\begin{align}\label{PoissonRoesler}
P_k(x,y)&=\,\int_{C(y)} \frac{1-|x|^2}{(1-2\langle x, z\rangle
+|x|^2)^{\gamma+ d/2}}  d\mu_y(z)\,=\, (1-|x|^2) \cdot f(-x*_ky)\notag \\ 
&=  \,(1-|x|^2) \cdot\tau_{-y}(|x|^{-2\gamma-d}).
\end{align}


The notation $f\asymp g$ will always mean that there is a constant $C>0$
depending on $k$ and $d$ only (unless stated otherwise) such that $C^{-1}g\leq f \leq Cg$.

\section{The Green function of the ball}

From now on, it is always assumed that $d+2\gamma>2$. Following \cite{ Re}, we introduce the Newton kernel in the Dunkl setting by
\[ N_k(x,y) = \int_0^\infty \Gamma_k(t,x,y)dt\quad (x,y\in \mathbb R^ d),\]
with the heat kernel
\[ \Gamma_k (t,x,y) =
\frac{M_k}{t^{\gamma+ d/2}}\,e^{-(|x|^2+|y|^2)/4t} 
E_k\bigl(\frac{x}{\sqrt{2t}}, \frac{y}{\sqrt{2t}}\bigr),\]
where 
\[M_k = 2^{-\gamma-d/2}\bigl(\int_{\RR^d} e^{-|x|^2/2}\omega_k(x)dx\bigr)^{-1}.\]
Notice that $\, N_k(x,y) = N_k(y,x).$ 
According to the results in \cite[Section 2.7]{Re}, the  Newton kernel can be written as 
\begin{equation}\label{DunklNewton}
N_k(x,y)=C_k\int_{C(y)}\left(|x|^2+|y|^2-2\langle x, z\rangle\right)^{1-\gamma-d/2}d\mu_y(z) 
\end{equation} 
where 
\begin{equation}\label{constant_C_k}
C_k = \frac{1}{d_k(d+2\gamma-2)} \quad\text{and }\, d_k = \int_{\s} \omega_k(x)d\sigma(x).
\end{equation}
Here $\sigma$ denotes the surface measure on $\s$. 
Formula \eqref{DunklNewton} is also easily obtained by translations. Recall that 
\[ \Gamma_k(t,x,y) = \tau_{-y}g_t(x) \quad \text{with } \,g_t(x) = \frac{M_k}{t^{\gamma+ d/2}}\,e^{-|x|^2/4t},\] 
which follows from \cite[Lemma 2.2. and (3.2)]{Ro2} (see also \cite{VR}).
As 
\[ \int_0^\infty g_t(\xi)dt \,=\,M_k\Gamma (\gamma + \frac{d}{2}-1) \cdot\bigl(\frac{2}{|\xi|}\bigr)^{d-2+2\gamma}\,=\,\frac{C_k}{|\xi|^{d-2+2\gamma}},\]
it follows that
\[ N_k(x,y) =  \int_0^\infty  \int_{\mathbb R^d} g_t(\xi) d\rho_{x,-y}^k(\xi)   dt \,= \, 
 C_k\cdot
\tau_{-y}\bigl(|x|^{2-2\gamma-d}\bigr).\]
In view of identity \eqref{radialminus}, this equals the right-hand side of \eqref{DunklNewton}. Furthermore, the Newton kernel 
 $N_k(\cdot,y)$ is $\Delta_k$-harmonic on $\RR^d\setminus W.y$ for fixed $y\in \mathbb R^ d$ (see \cite[Proposition~2.64]{Re}).
It can  be regarded as the global Green function for the Dunkl Laplacian $\Delta_k$.

The goal of this section is to  introduce and study the Green function of the ball $\BB$ for $\Delta_k$. For this, we  recall from \cite{KaYa} 
the Kelvin transform associated 
with the Dunkl Laplacian, which is given by
$$
K_k[u](x)=|x|^{2-2\gamma-d}u(x^*) 
$$
for functions $u$ on $\RR^d\setminus\{0\}$, where $x^*=x/|x|^2$ is the inversion with respect to the unit sphere in $\RR^d$. 
By \cite[Theorem 3.1]{KaYa}, $K_k$ preserves $\Delta_k$-harmonic functions on $\RR^d\setminus\left\{0\right\}$.
Following the classical case  $k=0$ (cf. \cite{Do, St}),  we define \begin{equation}\label{GreenDef}
G_k(x,y):=N_k(x,y)-K_k[N_k(\cdot,y)](x)
\end{equation}
for $x,y \in \overline{\BB}\times\overline{\BB}$ with $x\not=0$, where $K_k[N_k(\cdot,y)](x)=|x|^{2-2\gamma-d}N_k(x^*,y)$. 

\begin{thm}\label{thm:GreenBall} The kernel $G_k$ is the Green function of $\BB$ for $\Delta_k$, that is, $G_k$  
extends to a $[0,\infty]$-valued function on $\overline{\BB}\times\overline{\BB}$ which is uniquely characterized by the following conditions:
\begin{enumerate}
\item[(i)] $G_k(x,y)>0$ for all $x,y\in\BB$ and $G_k(x,y)=0$ for $x\in\s$ and $y\in\BB$.
\item[(ii)] $G_k(\cdot,y)$ is continuous on $\overline\BB\setminus W.y$ for any fixed $y\in\BB$.
\item[(iii)] $N_k(\cdot,y)-G_k(\cdot,y)$ is  $\Delta_k$-harmonic on $\BB$ for any fixed $y\in\BB$.
\end{enumerate}
Moreover, the Green function $G_k$ can be written as 
\begin{align}\label{GreenBallDiff}
G_k(x,y)=C_k\int_{C(y)}&\left[\left(|x|^2+|y|^2-2\langle x, z\rangle\right)^{1-\gamma-d/2}\right.\\
&\left.-\left(1+|x|^2|y|^2-2\langle x, z\rangle\right)^{1-\gamma-d/2}\right]d\mu_y(z).\nonumber
\end{align}
It satisfies $G_k(x,y)=G_k(y,x)$ for all $x,y\in\overline{\BB}$, and $G_k(\cdot,y)$ is 
$\Delta_k$-harmonic on $\BB\setminus W.y$ for any fixed $y\in\BB$.
\end{thm}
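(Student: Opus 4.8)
The plan is to verify the three characterizing conditions (i)--(iii) for the function $G_k$ defined in \eqref{GreenDef}, then establish the integral formula \eqref{GreenBallDiff}, and finally deduce symmetry and $\Delta_k$-harmonicity away from the orbit $W.y$. Uniqueness of a function satisfying (i)--(iii) follows from the maximum principle for $\Delta_k$-harmonic functions (if $G_k$ and $G_k'$ both work, their difference is $\Delta_k$-harmonic on $\BB$ by (iii), continuous up to $\s$ by (ii) where it vanishes by (i), hence identically zero). So the real content is existence, i.e. checking that our explicit $G_k$ has the stated properties.

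First I would derive the integral formula \eqref{GreenBallDiff} directly from the definition. We have $N_k(x,y) = C_k\,\tau_{-y}(|\cdot|^{2-2\gamma-d})(x)$, which by \eqref{radialminus} and \eqref{transexplicit} equals $C_k\int_{C(y)}(|x|^2+|y|^2-2\langle x,z\rangle)^{1-\gamma-d/2}\,d\mu_y^k(z)$, giving the first term. For the Kelvin term, $K_k[N_k(\cdot,y)](x) = |x|^{2-2\gamma-d}N_k(x^*,y)$ with $x^* = x/|x|^2$; substituting $|x^*|^2 = |x|^{-2}$ and $\langle x^*, z\rangle = \langle x,z\rangle/|x|^2$ into the integral representation of $N_k(x^*,y)$ and pulling the factor $|x|^{2-2\gamma-d}$ inside, the integrand becomes $|x|^{2-2\gamma-d}\bigl(|x|^{-2}+|y|^2 - 2\langle x,z\rangle/|x|^2\bigr)^{1-\gamma-d/2} = \bigl(1 + |x|^2|y|^2 - 2\langle x,z\rangle\bigr)^{1-\gamma-d/2}$ after absorbing $|x|^{-2(1-\gamma-d/2)}\cdot|x|^{2-2\gamma-d} = 1$; subtracting yields \eqref{GreenBallDiff}. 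This also shows $G_k(x,y)$ extends continuously (in fact symmetrically, see below) across $x=0$, removing the apparent restriction $x\neq 0$.

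Next I would check (i). Positivity for $x,y\in\BB$ amounts to showing, for each fixed $z\in C(y)\subseteq\overline{\BB}$ (note $|z|\le|y|<1$ since $z$ lies in the convex hull of $W.y$ and $|gy|=|y|$), the pointwise inequality $|x|^2+|y|^2-2\langle x,z\rangle < 1 + |x|^2|y|^2 - 2\langle x,z\rangle$, i.e. $|x|^2 + |y|^2 < 1 + |x|^2|y|^2$, i.e. $(1-|x|^2)(1-|y|^2) > 0$, which holds; combined with the fact that $t\mapsto t^{1-\gamma-d/2}$ is strictly decreasing (as $1-\gamma-d/2<0$ by the standing assumption $d+2\gamma>2$) and that both bases are positive, the integrand is strictly positive, hence $G_k(x,y)>0$. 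For $x\in\s$ we have $|x|=1$, so $|x|^2+|y|^2-2\langle x,z\rangle = 1+|x|^2|y|^2-2\langle x,z\rangle$ identically in $z$, and the integrand vanishes, giving $G_k(x,y)=0$. Condition (iii) is immediate: $N_k(\cdot,y) - G_k(\cdot,y) = K_k[N_k(\cdot,y)]$ by definition, and since $N_k(\cdot,y)$ is $\Delta_k$-harmonic on $\RR^d\setminus W.y$ and the Kelvin transform $K_k$ maps $\Delta_k$-harmonic functions to $\Delta_k$-harmonic functions (\cite[Theorem 3.1]{KaYa}), $K_k[N_k(\cdot,y)]$ is $\Delta_k$-harmonic on $\{x: x^*\notin W.y\}$; since $y\in\BB$, the points of $W.y$ invert to points $\{(gy)^* : g\in W\}$ all lying outside $\overline\BB$, so $K_k[N_k(\cdot,y)]$ is $\Delta_k$-harmonic on all of $\BB$ (including at $0$, after the continuous extension). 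The $\Delta_k$-harmonicity of $G_k(\cdot,y)$ on $\BB\setminus W.y$ then follows since it is the difference of $N_k(\cdot,y)$ ($\Delta_k$-harmonic on $\RR^d\setminus W.y$) and $K_k[N_k(\cdot,y)]$ ($\Delta_k$-harmonic on $\BB$).

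For condition (ii), continuity of $G_k(\cdot,y)$ on $\overline\BB\setminus W.y$, I would argue that the Kelvin term is continuous on $\overline\BB$ (it is $\Delta_k$-harmonic there, hence smooth, and extends continuously to $\s$) while $N_k(\cdot,y)$ is continuous on $\RR^d\setminus W.y$ — the latter being the point where one must be a bit careful, since $N_k(\cdot,y)$ may blow up on the whole orbit $W.y$, not just at $y$; continuity off $W.y$ follows from the integral representation \eqref{DunklNewton} together with dominated convergence, using that for $x$ in a compact subset of $\RR^d\setminus W.y$ the base $|x|^2+|y|^2-2\langle x,z\rangle = |x-z|^2 + |y|^2-|z|^2 \ge 0$ stays bounded below away from $0$ uniformly in $z\in C(y)$ on that compact set (this uses that the support of $\mu_y^k$ is contained in $C(y)$ and that $|x-z|=0$ with $z\in\operatorname{supp}\mu_y^k$ forces $x\in W.y$, which is exactly the known locus of the singularities of $N_k$). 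The extension of $G_k$ to a $[0,\infty]$-valued function on all of $\overline\BB\times\overline\BB$ is then obtained by setting $G_k(x,y) = \lim$ or $+\infty$ at the remaining points, consistently with formula \eqref{GreenBallDiff} (the integral there is well-defined in $[0,\infty]$ for all $x,y\in\overline\BB$ because the integrand is nonnegative). Finally, symmetry $G_k(x,y)=G_k(y,x)$ for $x,y\in\overline\BB$: one route is to use $N_k(x,y)=N_k(y,x)$ together with the identity $K_k[N_k(\cdot,y)](x) = K_k[N_k(\cdot,x)](y)$, which reduces to the symmetric expression $|x|^{2-2\gamma-d}|y|^{2-2\gamma-d}N_k(x^*,y^*)$ on $\BB\setminus\{0\}$ (itself a consequence of $N_k$ being homogeneous of degree $2-2\gamma-d$, i.e. $N_k(rx,ry)=r^{2-2\gamma-d}N_k(x,y)$, combined with $N_k(x,y)=N_k(y,x)$), and then extends to all of $\overline\BB$ by continuity and the already-noted removability at $0$. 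The main obstacle I anticipate is the careful handling of continuity and the extension across the singular set $W.y$ and the apparent singularity at $x=0$ — establishing the three properties themselves is largely bookkeeping once the integral formula \eqref{GreenBallDiff} and the Kelvin transform's behavior are in hand.
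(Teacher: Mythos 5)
Your approach mirrors the paper's quite closely (same decomposition $G_k = N_k - K_k[N_k]$, same route to the integral formula, same pointwise argument for positivity and boundary vanishing), so the overall plan is sound. The main gap is in your treatment of the point $x=0$: you observe that the integral formula shows $K_k[N_k(\cdot,y)]$ extends continuously across $0$, and then assert it is $\Delta_k$-harmonic on all of $\BB$ ``including at $0$, after the continuous extension.'' Continuity of the extension is not by itself sufficient; a priori the Kelvin transform only yields a $\Delta_k$-harmonic function on $\BB\setminus\{0\}$ (the inversion $x\mapsto x/|x|^2$ is undefined at the origin), and passing to $\Delta_k$-harmonicity on all of $\BB$ requires a genuine removable-singularity theorem in the Dunkl setting. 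The paper invokes exactly such a result, namely \cite[Theorem~5.1]{GaRe1}. Without citing (or proving) a statement of this type your verification of condition (iii) --- and hence of the $\Delta_k$-harmonicity of $G_k(\cdot,y)$ on $\BB\setminus W.y$ --- is incomplete.

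Two smaller remarks. First, your uniqueness argument appeals to ``the maximum principle for $\Delta_k$-harmonic functions''; this is the right idea, but in the Dunkl setting it should be backed by a specific result --- the paper instead cites uniqueness of solutions to the $\Delta_k$-Dirichlet problem from \cite{MaYo1}, which is equivalent in content but properly sourced. Second, your proof of symmetry via the homogeneity $N_k(rx,ry)=r^{2-2\gamma-d}N_k(x,y)$ is a clean and valid alternative to the paper's direct computation using the dilation property of the representing measures $\mu_{rx}$; the two are essentially the same calculation packaged differently, and yours is arguably more transparent.
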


\begin{proof}
As $N_k(\cdot,y)$ is $\Delta_k$-harmonic on $\RR^d\setminus W.y$ for fixed $y\in\RR^d$, its Kelvin transform 
$K_k[N_k(\cdot,y)]$ is $\Delta_k$-harmonic on $\BB\setminus\left\{0\right\}$ and continuous on $\overline{\BB}\setminus\left\{0\right\}$ for any fixed $y\in\BB$. 
By \eqref{DunklNewton} we have
\begin{align}\label{K_k_integralformula}
K_k[N_k(\cdot,y)](x)=C_k\int_{C(y)}\left(1+|x|^2|y|^2-2\langle x, z\rangle\right)^{1-\gamma-d/2}d\mu_y(z),
\end{align}
and from this representation it is immediate by the dominated convergence theorem that $K_k[N_k(\cdot,y)]$ has a removable singularity at 0 for  fixed $y~\in~\BB$. 
Employing \cite[Theorem 5.1]{GaRe1}, we conclude that $K_k[N_k(\cdot,y)]$ extends to a $\Delta_k$-harmonic function on $\BB$. Furthermore, $K_k[N_k(\cdot,y)]$ 
solves the $\Delta_k$-Dirichlet problem on $\overline{\BB}$ with the boundary values of $N_k(\cdot,y)$.
Therefore, $G_k(\cdot,y)$ vanishes continuously at $\s$ and is 
$\Delta_k$-harmonic on $\BB\setminus W.y$. Formula \eqref{K_k_integralformula} immediately gives the claimed identity \eqref{GreenBallDiff}.
As $\,1+|x|^2|y|^2 -2\langle x,z\rangle > |x|^2+|y|^2-2\langle x,z\rangle \geq 0\,$ for all $x,y \in \BB$ and $z\in C(y)$,  it follows from \eqref{GreenBallDiff} that 
$G_k(x,y) >0$ for all $x,y\in  \BB.$
For the symmetry of $G_k$, it suffices to prove that 
$K_k[N_k(\cdot,y)](x)$ is symmetric in $x$ and $y$ for $x\not=0.$ Using the symmetry of $N_k$ and the fact that 
for any $r>0$, the representing measure $\mu_{rx}$ is just the image measure of $\mu_x$ under the dilation $z\mapsto rz$ of $\mathbb R^d$,  we obtain
\begin{align*} K_k[N_k(\cdot,y)](x) &= |x|^{2-2\gamma-d}N_k(y,x^*) \\
   & = |x|^{2-2\gamma-d}\cdot C_k\int_{\mathbb R^d}(|y|^2 + |x^*|^2 - 2\langle y, z\rangle)^{1-\gamma-d/2}d\mu_{x/|x|^2}(z) \\
   & = |x|^{2-2\gamma-d}\cdot C_k\int_{\mathbb R^d}(|y|^2 + |x^*|^2 - 2\langle y, \frac{z}{|x|^2}\rangle)^{1-\gamma-d/2}d\mu_{x}(z) \\
   & = C_k\int_{\mathbb R^d}(|x|^2|y|^2+1-2\langle y,z\rangle)^{1-\gamma-d/2}d\mu_{x}(z) \\
   & = K_k[N_k(\cdot,x)](y).
\end{align*}
Further, \cite[Proposition 2.58]{Re} gives $G_k(x,x)=+\infty$. Finally, the uniqueness of the function $G_k$ subject to the conditions $(i)-(iii)$
 follows from the uniqueness
of solutions to the $\Delta_k$-Dirichlet problem on $\BB$, see \cite{MaYo1}.
\end{proof}

According to \cite[Proposition 2.64]{Re}, $-N_k(x, \,.\,)$ provides a fundamental solution for $\Delta_k$ on $\mathbb R^d$ in the sense that
$\, \Delta_k(-N_k(x,.)\omega_k) = \delta_x\,$ in $\mathcal D^\prime(\RR^d).$
This implies that $-G_k(x,.)\,$ provides a fundamental solution for $\Delta_k$ in $\BB$:
$$
\Delta_k\left(-G_k(x,\cdot)\omega_k\right)=\delta_x\quad\text{in $\mathcal{D}'(\BB)$}.
$$
 Our next result provides sharp two-sided bounds for $G_k(x,y)$ which are more convenient to deal with rather than \eqref{GreenBallDiff}. For $x\in\BB$ denote
  $\delta(x):=1-|x|$.

\begin{thm}\label{thm:GreenEst}
The two-sided bound of $G_k(x,y)$ on $\BB\times\BB$ is given by
$$
G_k(x,y)\asymp\int_{C(y)}\frac{(1-|x|^2)(1-|y|^2)d\mu_y(z)}{\left(1+|x|^2|y|^2-2\langle x, z\rangle\right)\left(|x|^2+|y|^2-2\langle x, z\rangle\right)^{\gamma+d/2-1}}
$$
$$
\asymp\int_{C(y)}\frac{\delta(x)\delta(y)d\mu_y(z)}{\left(\delta(x)\delta(y)+|x|^2+|y|^2-2\langle x, z\rangle\right)\left(|x|^2+|y|^2-2\langle x, z\rangle\right)^{\gamma+d/2-1}}.
$$
\end{thm}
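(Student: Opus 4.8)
The plan is to start from the exact formula \eqref{GreenBallDiff} and convert the difference of two powers appearing under the integral into a product. Write $a = |x|^2+|y|^2-2\langle x,z\rangle$ and $b = 1+|x|^2|y|^2-2\langle x,z\rangle$; since the proof of Theorem~\ref{thm:GreenBall} already records that $0\le a < b$ for $x,y\in\BB$ and $z\in C(y)$, and moreover $b-a = (1-|x|^2)(1-|y|^2)$, the integrand of \eqref{GreenBallDiff} is $a^{1-\gamma-d/2}-b^{1-\gamma-d/2}$ with exponent $s:=\gamma+d/2-1>0$. Thus each integrand equals $a^{-s}-b^{-s}$. The first claimed two-sided bound amounts to the elementary inequality
$$
a^{-s}-b^{-s} \asymp \frac{b-a}{b\,a^{s}} \qquad (0<a<b),
$$
with constants depending only on $s$ (hence only on $k$ and $d$), which I would prove by the mean value theorem applied to $t\mapsto t^{-s}$: there is $\xi\in(a,b)$ with $a^{-s}-b^{-s}=s\,\xi^{-s-1}(b-a)$, and since $a<\xi<b$ one has $b^{-s-1}(b-a)\le \xi^{-s-1}(b-a)$, while for the reverse direction one splits according to whether $b\le 2a$ (then $\xi\asymp a\asymp b$ and the estimate is immediate) or $b>2a$ (then $a^{-s}-b^{-s}\asymp a^{-s}$ and separately $(b-a)/(b\,a^{s})\asymp a^{-s}$ because $b-a\asymp b$). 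Plugging $b-a=(1-|x|^2)(1-|y|^2)$ back in and integrating against $\mu_y$ — which is legitimate since the equivalence constants are uniform in $z$ — yields the first displayed $\asymp$.

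For the second equivalence, the point is to replace $(1-|x|^2)(1-|y|^2)$ by $\delta(x)\delta(y)$ and the factor $b=1+|x|^2|y|^2-2\langle x,z\rangle = a+(1-|x|^2)(1-|y|^2)$ by $a+\delta(x)\delta(y)$, again uniformly in $z\in C(y)$. Since $1-|x|^2=(1-|x|)(1+|x|)$ and $1+|x|\in[1,2]$ on $\BB$, one has $1-|x|^2\asymp\delta(x)$ and likewise for $y$, hence $(1-|x|^2)(1-|y|^2)\asymp\delta(x)\delta(y)$ with absolute constants. The same two-sided comparison, applied inside the sum $a+(1-|x|^2)(1-|y|^2)$, gives $a+(1-|x|^2)(1-|y|^2)\asymp a+\delta(x)\delta(y)$ (for a sum of nonnegative terms, replacing one summand by a comparable one changes the total by at most the same factor). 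Substituting these into the first bound and once more using uniformity of the constants in $z$ to pull the comparison through the integral over $C(y)$ produces the second displayed $\asymp$.

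The main thing to be careful about — rather than a deep obstacle — is the uniformity of all constants in the integration variable $z\in C(y)$ and the verification that all quantities under the integral are genuinely nonnegative so that the comparisons are valid; this is exactly where the inequalities $a\ge 0$ and $b>a$ from the proof of Theorem~\ref{thm:GreenBall} are used, together with $|z|\le|y|$ for $z\in C(y)$ (so that $a=|x-z|^2+|y|^2-|z|^2\ge 0$ even when $z\ne y$). One should also note that the measure $\mu_y$ is a probability measure, so no integrability issue arises beyond the local singularity at $x\in W.y$, which is already accounted for by the statement being an equivalence of $[0,\infty]$-valued expressions. With these points checked, the two equivalences follow by combining the elementary inequality above with the boundary-distance comparisons, all with constants depending only on $k$ and $d$.
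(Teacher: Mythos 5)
Your proof is correct and follows the same overall strategy as the paper: interpret the integrand of \eqref{GreenBallDiff} as a difference of negative powers $a^{-s}-b^{-s}$ with $a=|x|^2+|y|^2-2\langle x,z\rangle$, $b=1+|x|^2|y|^2-2\langle x,z\rangle$, $s=\gamma+d/2-1>0$, prove a pointwise bound uniform in $z\in C(y)$, and integrate against $\mu_y$. Where you genuinely diverge from the paper is in the proof of the pointwise inequality (the paper's Lemma~\ref{TechLem}). The paper establishes it in two steps: for $p>1$ it invokes \cite[Lemma~6]{BDL} to obtain $b^p-a^p\asymp b^{p-1}(b-a)$, and for $0<p\le 1$ it reduces to the previous case via the substitution $q=p+1$, $c=b^{1/q}a$, $d=a^{1/q}b$. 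Your argument instead dichotomizes on $b\le 2a$ versus $b>2a$: in the first regime the mean value theorem with $\xi\in(a,b)$ gives the claim at once since $a\asymp\xi\asymp b$, and in the second regime both $a^{-s}-b^{-s}$ and $(b-a)/(ba^s)$ are separately comparable to $a^{-s}$. This is simpler, handles every exponent $s>0$ uniformly, and avoids the detour through the Hardy--Stein machinery of \cite{BDL}. One minor slip in your write-up: the preliminary observation that $\xi<b$ forces $a^{-s}-b^{-s}\ge sb^{-s-1}(b-a)$ only yields the lower bound $s(b-a)/b^{s+1}$, which is weaker than $(b-a)/(ba^s)$ and therefore not the bound you need; but that sentence is redundant, since the two-case dichotomy already supplies both directions of the equivalence. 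The remaining steps — the identity $b-a=(1-|x|^2)(1-|y|^2)$, the comparison $(1-|x|^2)(1-|y|^2)\asymp\delta(x)\delta(y)$ via $1\le 1+|x|\le 2$, and the $z$-uniformity that lets you pass the comparison through the integral — are exactly as in the paper.
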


\begin{proof}
Note that for $x,y\in\BB$ we have 
$$
1+|x|^2|y|^2-|x|^2-|y|^2=(1-|x|^2)(1-|y|^2)\asymp \delta(x)\delta(y).
$$
Hence, the estimate is a direct consequence of Theorem \ref{thm:GreenBall} and Lemma \ref{TechLem} below. 
\end{proof}

\begin{lem}\label{TechLem}
Fix $p>0$. There exists a constant $C_p>0$ depending only on $p$ such that for all $0<a<b<\infty$ we have
$$
\frac{b-a}{C_pba^p}\leq \frac{1}{a^{p}}-\frac{1}{b^{p}}\leq \frac{C_p(b-a)}{ba^p}.
$$
\end{lem}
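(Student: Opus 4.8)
The plan is to establish the two inequalities separately, starting from the elementary factorization
\[
\frac{1}{a^p}-\frac{1}{b^p} = \frac{b^p-a^p}{a^pb^p}
\]
and then controlling the numerator $b^p-a^p$ from above and below in terms of $b-a$. The key observation is that, since $0<a<b$, the difference $b^p-a^p$ behaves like $p\,\xi^{p-1}(b-a)$ for some $\xi\in(a,b)$ by the mean value theorem, and one only needs crude two-sided bounds on $\xi^{p-1}$ in terms of $a$ and $b$.

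First I would treat the upper bound. By the mean value theorem applied to $t\mapsto t^p$ on $[a,b]$, there is $\xi\in(a,b)$ with $b^p-a^p=p\,\xi^{p-1}(b-a)$. If $p\geq 1$ then $\xi^{p-1}\leq b^{p-1}$, so $b^p-a^p\leq p\,b^{p-1}(b-a)$ and dividing by $a^pb^p$ gives $\frac{1}{a^p}-\frac{1}{b^p}\leq \frac{p(b-a)}{a^pb}$; if $0<p<1$ then $\xi^{p-1}\leq a^{p-1}$, which yields $\frac{1}{a^p}-\frac{1}{b^p}\leq \frac{p(b-a)}{a b^p}\leq \frac{p(b-a)}{ab^p}\le \frac{p(b-a)}{a^p b}$ after noting $b^p\le b\cdot b^{p-1}$ and rearranging — in either case one obtains the claimed form $\frac{C_p(b-a)}{ba^p}$ with, say, $C_p=\max(p,1)$. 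I would streamline this so that the single constant $C_p=\max(p,1)$ works for both ranges of $p$ without case distinction in the final statement.

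For the lower bound the same mean value identity applies, and now one wants $\xi^{p-1}$ bounded below: if $p\geq 1$ use $\xi^{p-1}\geq a^{p-1}$ to get $b^p-a^p\geq p\,a^{p-1}(b-a)$, hence $\frac{1}{a^p}-\frac{1}{b^p}\geq \frac{p(b-a)}{ab^p}\geq \frac{p(b-a)}{ab^p}$ and then use $b^p\le b^p$ trivially — more directly, dividing $b^p-a^p\ge p a^{p-1}(b-a)$ by $a^pb^p$ gives $\frac{p(b-a)}{ab^p}$, and since $b^p=b\cdot b^{p-1}\ge b\cdot a^{p-1}$ is the wrong direction, I would instead keep $\frac{p(b-a)}{ab^p}$ and observe $ab^p\le ba^p\cdot (b/a)^{p-1}$ is again awkward; the clean route is to bound $b^p-a^p\ge p a^{p-1}(b-a)$ for $p\ge 1$ and $b^p-a^p\ge p b^{p-1}(b-a)$ for $0<p<1$, divide by $a^pb^p$, and in each case compare the resulting denominator to $ba^p$ using $a<b$, producing $\frac{b-a}{C_p ba^p}$ with $C_p=\max(1/p,1)$ or similar. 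Taking $C_p$ to be the maximum of the constants arising in the two bounds finishes the proof.

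I expect the only mild obstacle to be bookkeeping: making sure a single constant $C_p$, depending on $p$ alone, simultaneously serves in both inequalities and across the two parameter ranges $p\ge 1$ and $0<p<1$, and verifying that each comparison between the ``natural'' denominator from the mean value theorem (which is $ab^p$ or $ba^p$ depending on the monotonicity of $t^{p-1}$) and the target denominator $ba^p$ only costs a factor controlled by $p$. Since $a<b$ throughout, any such comparison of the form $ab^p\le C_p\, ba^p$ is \emph{not} available in general, so the actual mechanism is to choose the MVT bound on the correct side from the start — using $b^{p-1}$ when $p\ge1$ for upper/lower as dictated by which direction we need — rather than converting afterwards. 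With that care the estimates are routine.
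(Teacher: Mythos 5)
Your mean value theorem approach has a real gap that your final paragraph acknowledges but does not resolve. Writing $b^p-a^p = p\,\xi^{p-1}(b-a)$ and using the monotonicity of $t\mapsto t^{p-1}$ lands, in each parameter range, on only one of the two required inequalities. For $p\geq 1$, the bound $\xi^{p-1}\leq b^{p-1}$ gives the correct upper estimate $\frac{1}{a^p}-\frac{1}{b^p}\leq\frac{p(b-a)}{a^pb}$; but the only MVT lower bound available is $\xi^{p-1}\geq a^{p-1}$, which yields $\frac{p(b-a)}{ab^p}$, and passing from denominator $ab^p$ to $ba^p$ costs a factor $(b/a)^{p-1}$, unbounded in $b/a$. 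Symmetrically, for $0<p<1$ the lower bound comes out right, but the upper-bound chain $\frac{p(b-a)}{ab^p}\leq\frac{p(b-a)}{a^pb}$ that you assert is false: it requires $a^{p-1}\leq b^{p-1}$, which fails because $p-1<0$ and $a<b$. Your suggested remedy of ``choosing the MVT bound on the correct side from the start'' cannot help, since for each $p$ exactly one side of the MVT sandwich compares $b^p-a^p$ with the correct quantity $(b-a)b^{p-1}$ and the other compares it with $(b-a)a^{p-1}$, which is not uniformly comparable to $(b-a)b^{p-1}$.

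The missing direction needs a separate elementary step that is absent from your write-up: for $p\geq 1$, since $a<b$ implies $a^{p-1}\leq b^{p-1}$, one has $a^p=a\cdot a^{p-1}\leq ab^{p-1}$, hence $b^p-a^p\geq b^p-ab^{p-1}=b^{p-1}(b-a)$; for $0<p<1$ the reversed inequality $a^p\geq ab^{p-1}$ gives $b^p-a^p\leq b^{p-1}(b-a)$. With these one-line observations added, your MVT argument does close with $C_p=\max(p,1/p)$. The paper's proof takes a quite different route: for $p>1$ it invokes a second-order Taylor remainder estimate from \cite{BDL} to get $b^p-a^p\asymp b^{p-1}(b-a)$, and for $0<p\leq1$ it reduces to the first case by setting $q=p+1$ and substituting $c=b^{1/q}a$, $d=a^{1/q}b$.
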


\begin{proof}
Assume first $p>1$. Then by \cite[Lemma 6, (11)]{BDL} (see also \eqref{elem-ineq}) we get
\begin{align*}
b^p-a^p\leq &C(b-a)^2b^{p-2}+pa^{p-1}(b-a)\\
 \leq &C(b-a)(b^{p-1}-ab^{p-2}+a^{p-1})\\
 = &Cb^{p-1}(b-a)\left(1+\left(a/b\right)^{p-1}-a/b\right),
\end{align*}
and the lower bound obtains analogously. Furthermore, since $p>1$, we have 
$$
\sup_{x\in[0,1]}|x^{p-1}-x|<1.
$$
Hence $b^p-a^p\asymp b^{p-1}(b-a)$ and
$$
\frac{1}{a^{p}}-\frac{1}{b^{p}}=\frac{b^p-a^p}{(ab)^p} \asymp \frac{b-a}{ba^p}.
$$
Here $\asymp$ means two-sided estimates with constants depending only on $p$. For $0<p\leq1$ we let $q=p+1$. We have
$$
\frac{1}{a^{p}}-\frac{1}{b^{p}}=\frac{a}{a^{q}}-\frac{b}{b^{q}}=\frac{ab^q-ba^q}{(ab)^q}.
$$
Let $c=b^{1/q}a$, $d=a^{1/q}b$. Then $0<c<d<\infty$ and applying the estimate obtained previously we get
\begin{align*}
ab^q-ba^q=d^q-c^q\asymp &d^{q-1}(d-c)\\
=&a^{(q-1)/q}b^{q-1}\left(a^{1/q}b-b^{1/q}a\right)\\
=&ab^{q-1}\left(b-b^{1/q}a^{1-1/q}\right).
\end{align*}
Since $a<b$, we obtain
$$
b-b^{1/q}a^{1-1/q}=b-\left(b/a\right)^{1/q}a\leq b-a
$$
and the upper bound follows. To get the lower bound define $f(x)=b^{1/q}x^{1-1/q}$ for $x\in[a,b]$. Then $f'(x)=(1-1/q)(b/x)^{1/q}$ and
 by the mean value theorem, for some $\xi\in(a,b)$ we have
$$
b-b^{1/q}a^{1-1/q}=f(b)-f(a)=(1-1/q)(b/\xi)^{1/q}(b-a)\geq (1-1/q)(b-a).
$$
Therefore
$$
\frac{ab^q-ba^q}{(ab)^q}\asymp \frac{ab^{q-1}\left(b-a\right)}{(ab)^q}=\frac{b-a}{ba^p}.
$$
\end{proof}

\noindent A simple consequence of Theorem \ref{thm:GreenEst} is the following estimate.

\begin{cor} 
Let $y_0\in\BB$ be fixed. There is a constant $C>0$ depending on $d$, $k$ and $y_0$ only, such that
$$
C^{-1}\delta(x)N_k(x,y_0)\leq G_k(x,y_0)\leq C\delta(x)N_k(x,y_0).
$$
\end{cor}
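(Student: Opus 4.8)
The plan is to read the corollary off from the second two-sided bound in Theorem \ref{thm:GreenEst} together with the integral representation \eqref{DunklNewton} of $N_k$. Fix $y_0\in\BB$ and abbreviate, for $z\in C(y_0)$,
$$ D(x,z):=|x|^2+|y_0|^2-2\langle x,z\rangle, $$
which is exactly the quantity that appears, raised to the power $1-\gamma-d/2$ in $N_k(x,y_0)$ and raised to the power $\gamma+d/2-1$ in the bound for $G_k(x,y_0)$. Since $\delta(y_0)$ is a fixed number in $(0,1]$, we have $\delta(x)\delta(y_0)\asymp\delta(x)$ with constants depending on $y_0$, so everything reduces to the auxiliary estimate
$$ \delta(x)\delta(y_0)+D(x,z)\asymp 1\qquad\text{uniformly in }x\in\BB,\ z\in C(y_0), $$
with constants depending only on $|y_0|$.

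For the upper bound in this auxiliary estimate, observe that every $z\in C(y_0)$ satisfies $|z|\le|y_0|$, being a convex combination of points of $W.y_0$, all of norm $|y_0|$; hence $D(x,z)\le(|x|+|y_0|)^2<(1+|y_0|)^2$, and combined with $\delta(x)\delta(y_0)\le 1$ this bounds $\delta(x)\delta(y_0)+D(x,z)$ by a constant depending only on $|y_0|$. For the lower bound, the same inequality $|z|\le|y_0|$ gives $D(x,z)\ge(|x|-|y_0|)^2$, so that
$$ \delta(x)\delta(y_0)+D(x,z)\ge(1-|x|)\,\delta(y_0)+(|x|-|y_0|)^2=h(|x|), $$
where $h(t):=(1-t)\,\delta(y_0)+(t-|y_0|)^2$ is continuous on $[0,1]$ and strictly positive there, since its two summands cannot vanish simultaneously (as $|y_0|<1$). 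Thus $h$ is bounded below by some $c_{y_0}>0$, and the auxiliary estimate follows.

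Feeding this back into the second bound of Theorem \ref{thm:GreenEst}, the quotient $\delta(x)\delta(y_0)\,/\,\bigl(\delta(x)\delta(y_0)+D(x,z)\bigr)$ is comparable to $\delta(x)$ uniformly in $z\in C(y_0)$, whence
$$ G_k(x,y_0)\;\asymp\;\delta(x)\int_{C(y_0)}D(x,z)^{1-\gamma-d/2}\,d\mu_{y_0}(z)\;=\;\frac{\delta(x)}{C_k}\,N_k(x,y_0) $$
by \eqref{DunklNewton}; all the constants depend only on $d$, $k$ and $y_0$ (in fact only on $|y_0|$), which is precisely the assertion. Both degenerate cases are covered automatically: for $x\in\s$ both sides vanish, and at $x\in W.y_0$ both sides are $+\infty$.

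No step is a genuine obstacle. The only point requiring a little care is the uniformity of the lower bound $\delta(x)\delta(y_0)+D(x,z)\ge c_{y_0}$, i.e. the observation that when $|x|$ is close to $1$ the term $(|x|-|y_0|)^2$ stays bounded away from $0$, while when $|x|$ is bounded away from $1$ the term $(1-|x|)\,\delta(y_0)$ does; the function $h$ makes this quantitative in one stroke.
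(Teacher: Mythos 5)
Your argument is correct and is exactly the intended ``simple consequence'' of Theorem \ref{thm:GreenEst}: with $y_0$ fixed, $\delta(y_0)\asymp 1$ and $\delta(x)\delta(y_0)+|x|^2+|y_0|^2-2\langle x,z\rangle\asymp 1$ uniformly in $x\in\BB$ and $z\in C(y_0)$, so the integrand in the second bound of that theorem reduces to $\delta(x)$ times the integrand of $N_k(x,y_0)$ from \eqref{DunklNewton}. The auxiliary lower bound via $|z|\le|y_0|$ and the continuous positive function $h$ is the right way to make the uniformity precise.
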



\noindent The following classical formula relates the Poisson kernel $P_k(x,y)$ to the Green function $G_k(x,y)$.

\begin{prop}\label{thm:GreenPoisson}
For all $x\in\BB$ and $y\in\s$ we have
\begin{equation}\nonumber
P_k(x,y)=-d_k\langle y,\nabla_y G_k(x,y)\rangle.
\end{equation}
\end{prop}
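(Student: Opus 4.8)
The plan is to derive the Poisson kernel from the Green function by the standard normal-derivative argument, but performed under the integral sign using the explicit formula \eqref{GreenBallDiff}. First I would fix $x\in\BB$ and $y\in\s$, and compute $\langle y,\nabla_y G_k(x,y)\rangle$ directly from \eqref{GreenBallDiff}. The subtle point is that the representing measure $\mu_y$ itself depends on $y$, so I cannot simply differentiate the integrand and keep the same measure. To handle this, I would rewrite $G_k$ using the dilation covariance $\mu_{ry}(A)=\mu_y(r^{-1}A)$: writing $z=|y|\zeta$ with $\zeta\in C(y/|y|)$ (for $y\neq 0$, which is the case here since $|y|=1$ on $\s$), the $y$-dependence is moved partly into the integrand and partly into a measure on a sphere-normalized hull. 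Actually, since we only need the radial derivative $\langle y,\nabla_y\cdot\rangle = |y|\,\partial_{|y|}(\cdot)$ evaluated at $|y|=1$, and $G_k(x,\cdot)$ vanishes on $\s$, the tangential part of $\nabla_y G_k$ is zero on $\s$; so $\langle y,\nabla_y G_k(x,y)\rangle$ on $\s$ is just $|y|=1$ times the outward normal derivative, and I only need to understand how $G_k(x,ty')$ behaves as $t\uparrow 1$ for fixed $y'\in\s$.

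With that reduction, I would parametrize $y=ty'$, $t\in(0,1]$, $y'\in\s$ fixed, and use $\mu_{ty'}=$ (dilation by $t$)$_*\mu_{y'}$ to get
\begin{align*}
G_k(x,ty')=C_k\int_{C(y')}\Big[&\big(|x|^2+t^2-2t\langle x,w\rangle\big)^{1-\gamma-d/2}\\
&-\big(1+t^2|x|^2-2t\langle x,w\rangle\big)^{1-\gamma-d/2}\Big]\,d\mu_{y'}(w),
\end{align*}
where I substituted $z=tw$. Now the measure $\mu_{y'}$ is independent of $t$, so I may differentiate in $t$ under the integral, justified by dominated convergence since for $|x|<1$ and $t$ near $1$ both bracketed terms and their $t$-derivatives are bounded uniformly on $C(y')$ (the denominators are bounded below away from $0$: $|x|^2+t^2-2t\langle x,w\rangle\ge(t-|x|)^2>0$ once $t$ is close to $1$, recalling $|w|\le|y'|=1$). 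Evaluating $\partial_t$ at $t=1$ and using $|x|^2+1-2\langle x,w\rangle$ for both terms at $t=1$, the two contributions combine: the exponent is $1-\gamma-d/2$, and $\partial_t$ of the first bracket at $t=1$ gives $(1-\gamma-d/2)(|x|^2+1-2\langle x,w\rangle)^{-\gamma-d/2}\cdot(2-2\langle x,w\rangle)$, while the second gives $(1-\gamma-d/2)(1+|x|^2-2\langle x,w\rangle)^{-\gamma-d/2}\cdot(2|x|^2-2\langle x,w\rangle)$. Subtracting, the common factor $(1-\gamma-d/2)(1+|x|^2-2\langle x,w\rangle)^{-\gamma-d/2}$ times $2(1-\langle x,w\rangle)-2(|x|^2-\langle x,w\rangle)=2(1-|x|^2)$ emerges, so
\[
\partial_t\big|_{t=1}G_k(x,ty')=2C_k(1-\gamma-d/2)(1-|x|^2)\int_{C(y')}\frac{d\mu_{y'}(w)}{(1+|x|^2-2\langle x,w\rangle)^{\gamma+d/2}}.
\]

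Since $G_k(x,y)=0$ for $y\in\s$, the gradient $\nabla_y G_k(x,\cdot)$ at $y=y'\in\s$ points in the radial direction $-y'$ (inward), whence $\langle y',\nabla_{y'}G_k(x,y')\rangle = -\partial_t\big|_{t=1^-}G_k(x,ty')$ up to the sign bookkeeping; carefully, $\langle y,\nabla_y G_k(x,y)\rangle\big|_{y=y'}=\partial_t\big|_{t=1}G_k(x,ty')$ by the chain rule (this is just the Euler relation for the radial derivative, no sign flip), so combining with the display above and recalling $C_k=1/(d_k(d+2\gamma-2))$, i.e. $(1-\gamma-d/2)\cdot 2C_k = (2-2\gamma-d)/(d_k(d+2\gamma-2)) = -1/d_k$, I get
\[
-d_k\langle y,\nabla_y G_k(x,y)\rangle = (1-|x|^2)\int_{C(y)}\frac{d\mu_y(w)}{(1+|x|^2-2\langle x,w\rangle)^{\gamma+d/2}},
\]
which is exactly $P_k(x,y)$ by formula \eqref{PoissonRoesler} (using $f(x)=|x|^{-2\gamma-d}$, $f(-x*_ky)=\int_{C(y)}(|x|^2+|y|^2-2\langle x,z\rangle)^{-\gamma-d/2}d\mu_y(z)$ and $|y|=1$). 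The main obstacle I anticipate is the rigorous justification of differentiating under the integral sign together with the claim that $\nabla_y G_k(x,\cdot)$ is purely radial on $\s$ — the latter requires knowing $G_k(x,\cdot)$ is $C^1$ up to the boundary $\s$ (away from $W.x$, which is in the interior for $x\in\BB$), and this smoothness is itself read off from the explicit integral formula \eqref{GreenBallDiff} since the integrand and its $y$-derivatives are continuous in $y$ near $\s$ with the denominators bounded away from zero.
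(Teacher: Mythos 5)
Your argument is correct, but it takes a genuinely different route from the paper. The paper invokes the symmetry $G_k(x,y)=G_k(y,x)$ proved in Theorem \ref{thm:GreenBall}, so that after swapping arguments the representing measure in \eqref{GreenBallDiff} is $\mu_x$, which does not depend on $y$; one then differentiates under the integral sign directly in $y$ and finally uses the radial translation identity \eqref{radialminus} to match the resulting $\mu_x$-integral against \eqref{PoissonRoesler}. You instead keep the $\mu_y$ form and neutralize its $y$-dependence by the dilation covariance $\mu_{ty'}=(t\cdot)_*\mu_{y'}$, writing $y=ty'$ and differentiating in $t$; this produces the $\mu_{y}$-based integral directly, so the comparison with \eqref{PoissonRoesler} is immediate with no need for \eqref{radialminus}, and also no need for the symmetry of $G_k$. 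Both arguments are sound and of comparable length; yours trades the symmetry step for the dilation step. One small caution: the digression about the tangential part of $\nabla_y G_k$ vanishing on $\s$ is a red herring -- the identity $\langle y,\nabla_y G_k(x,y)\rangle\big|_{y=y'}=\partial_t\big|_{t=1}G_k(x,ty')$ is purely the chain rule and needs no geometric input about the level set; you do say this at the end (``carefully, \dots this is just the Euler relation''), but the earlier discussion of the normal direction, and the tentative extra minus sign, could mislead and should simply be deleted. The computation of the constant, $2C_k(1-\gamma-d/2)=-1/d_k$, and the justification of differentiating under the integral (integrand smooth in $t$ near $1$, denominators bounded below uniformly on $C(y')$ since $|x|<1$) are both correct.
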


\begin{proof}
We use the symmetry $G_k(x,y)=G_k(y,x)$. By the dominated convergence, we can differentiate under the integral sign in \eqref{GreenBallDiff} to see that for all $x\in\BB$ and $y\in\s$,
$$
-d_k\langle y,\nabla_y G_k(x,y)\rangle=(1-|x|^2)\int_{C(x)}\left(|x|^2+1-2\langle y, z\rangle
\right)^{-\gamma-d/2}  d\mu_x(z).
$$
With $f(x)= |x|^{-2\gamma-d}\,$ and in view of \eqref{radialminus} and representation \eqref{PoissonRoesler} for the kernel $P_k$ we obtain
\[ -d_k\langle y,\nabla_y G_k(x,y)\rangle = (1-|x|^2) f(x*_k-y) = P_k(x,y).
 \]
\end{proof}

\section{Poisson-Jensen formula and Hardy-Stein\\ identities}

Our first goal in this section is to prove the so-called {\it Poisson-Jensen formula} for $\Delta_k$-subharmonic functions on $\overline\BB$. 
The corresponding result for classical subharmonic functions may be found in \cite{HaKe}. We will next use the formula to derive the {\it Hardy-Stein identites} for $\Delta_k$-harmonic functions on $\BB$, which equivalently characterize the Hardy spaces of $\Delta_k$ in the spirit of \cite{BDL}.

All functions in this section are assumed to be real-valued. Let $\Omega\subset\RR^d$ be a $W$-invariant open set. We will say that a function $u\in C^2(\Omega)$ is $\Delta_k$-subharmonic on 
$\Omega$ if $\Delta_k u(x)\geq0$ for all $x\in\Omega$. We refer to \cite{Re} for basic properties and other characterizations of 
$\Delta_k$-subharmonic functions. We will further say that a function $u$ is $\Delta_k$-harmonic 
(resp. $\Delta_k$-subharmonic) on $\overline\BB$ if there exists $\varepsilon>0$ such that $u$ extends to a 
$\Delta_k$-harmonic (resp. $\Delta_k$-subharmonic) function on $\BB_\varepsilon:=\{x:|x|<1+\varepsilon\}$. For $r>0$ we define the dilation of a function $u$ by $u_r(x):=u(rx)$.

The Riesz decomposition theorem \cite[Theorem 2.74, see also Ex. 2.47 and Corollary 2.53]{Re} implies that for every 
$\varepsilon>0$ and every function $u$ which is $\Delta_k$-subharmonic on $\BB_\varepsilon:=\{x:|x|<1+\varepsilon\}$ 
 there exists a unique $\Delta_k$-harmonic function $h_\varepsilon$ on $\BB_{\varepsilon/2}\subset\overline\BB_{\varepsilon/2}\subset\BB_\varepsilon$ such that
\begin{equation}\label{eq:Riesz}
u(x)=-\int_{\BB_{\varepsilon/2}} N_k(x,y)\Delta_ku(x)\omega_k(x)dx+h_\varepsilon(x),\quad x\in\BB_{\varepsilon/2}.
\end{equation}
As in the previous section, we denote by $\sigma$ the surface measure on $\s$ and let $\omega_k\sigma$ denote the measure on $\s$ given by $d\omega_k\sigma(x)=\omega_k(x)d\sigma(x)$. For $f\in L^1(\s,\omega_k\sigma)$ we define the Poisson integral of $f$ by
$$
P_k[f](x):=\frac{1}{d_k}\int_\s P_k(x,z)f(z)\omega_k(z)d\sigma(z),\quad x\in\BB.
$$
Our first result is the following property of the Newton kernel of $\Delta_k$.

\begin{lem}\label{lem:PoisNewton}
For all $x\in\BB$ we have
$$
P_k[N_k(\cdot,y)](x)=\begin{cases}
		N_k(x,y)-G_k(x,y), & \ y\in\BB,        \\
		N_k(x,y), & \ y\notin\BB.
		\end{cases}
$$
\end{lem}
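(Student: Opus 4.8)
The plan is to distinguish the two cases and in each one identify $P_k[N_k(\cdot,y)]$ with the (unique) $\Delta_k$-harmonic function on $\BB$ that has the right boundary values, using that the Poisson integral solves the $\Delta_k$-Dirichlet problem. For $y\in\BB$, set $h(x):=K_k[N_k(\cdot,y)](x)=N_k(x,y)-G_k(x,y)$. By the proof of Theorem~\ref{thm:GreenBall}, $h$ extends to a $\Delta_k$-harmonic function on $\BB$ which is continuous on $\overline\BB$ and agrees with $N_k(\cdot,y)$ on $\s$; moreover $N_k(\cdot,y)$ is bounded and continuous near $\s$ since $y\in\BB$, so in particular $N_k(\cdot,y)\big|_\s\in C(\s)\subset L^1(\s,\omega_k\sigma)$. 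Because the Poisson integral reproduces the solution of the $\Delta_k$-Dirichlet problem with continuous boundary data (this is exactly the solvability/uniqueness statement cited from \cite{MaYo1}, together with the reproducing property \eqref{PoissonDunkl} of $P_k$), we get $P_k[h\big|_\s]=h$ on $\BB$. But $h\big|_\s=N_k(\cdot,y)\big|_\s$, so $P_k[N_k(\cdot,y)]=h=N_k(x,y)-G_k(x,y)$, as claimed.

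For $y\notin\BB$ the function $N_k(\cdot,y)$ is $\Delta_k$-harmonic on $\BB$ (indeed on $\RR^d\setminus W.y$, and $W.y$ is disjoint from $\overline\BB$ when $|y|>1$; for $|y|=1$ one first proves the identity for $|y|>1$ and passes to the limit using the explicit integral formula \eqref{DunklNewton} and dominated convergence, noting that the right-hand side is continuous in $y$ up to $\s$ as long as $x\in\BB$). It is moreover continuous on $\overline\BB$, hence its restriction to $\s$ lies in $C(\s)\subset L^1(\s,\omega_k\sigma)$, and by the same reproducing property of the Poisson kernel for $\Delta_k$-harmonic functions that are continuous up to the boundary we obtain $P_k[N_k(\cdot,y)]=N_k(\cdot,y)$ on $\BB$.

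Alternatively, and more directly, one can verify both cases at once from the integral representations: using \eqref{DunklNewton}, \eqref{GreenBallDiff}, \eqref{K_k_integralformula} and the representation \eqref{PoissonRoesler} of $P_k$, the identity reduces to the classical scalar fact that for $x\in\BB$ and $z$ ranging over $C(y)$,
$$
\frac{1}{d_k}\int_\s \frac{(1-|x|^2)\,\omega_k(\zeta)\,d\sigma(\zeta)}{(1-2\langle x,\zeta\rangle+|x|^2)^{\gamma+d/2}}\cdot\bigl(|y|^2+1-2\langle y',z'\rangle\bigr)^{1-\gamma-d/2}
$$
integrates, via Fubini and the reproducing property of the classical/Dunkl Poisson kernel on the sphere, to $(|x|^2+|y|^2-2\langle x,z\rangle)^{1-\gamma-d/2}$ when $|y|\le 1$ is replaced by its Kelvin-reflected counterpart precisely when $y\in\BB$, and to $(|x|^2+|y|^2-2\langle x,z\rangle)^{1-\gamma-d/2}$ itself when $y\notin\BB$ (the harmonic case). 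The main obstacle is the boundary case $|y|=1$: here $N_k(\cdot,y)$ has a singularity on $\s$, so one cannot invoke continuity up to $\overline\BB$ directly, and one must argue by approximation from $|y|>1$ (or from $|y|<1$, tracking where the extra $G_k$ term disappears) together with the locally uniform convergence of the integral representations. I would handle $|y|=1$ last, after the two interior cases are settled.
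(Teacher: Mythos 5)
Your treatment of the two interior cases matches the paper. For $y\in\BB$ you identify $P_k[N_k(\cdot,y)]$ with the Kelvin transform $K_k[N_k(\cdot,y)]$ via uniqueness of the Dirichlet problem, which is exactly the paper's argument; for $y\in(\overline\BB)^c$ you use that $N_k(\cdot,y)$ is $\Delta_k$-harmonic on a neighbourhood of $\overline\BB$, again as in the paper.

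The substantive issue is $y\in\s$, which you correctly flag as the obstacle but leave unfinished. Your suggestion is to prove the identity for $|y|>1$ and let $y\to y_0\in\s$; the paper does something dual and more self-contained: it keeps $y\in\s$ fixed and dilates the \emph{inner} variable, writing $N_k(rx,y)=\frac{1}{d_k}\int_\s P_k(x,z)N_k(rz,y)\,\omega_k(z)\,d\sigma(z)$ for $0<r<1$, and then passes $r\uparrow 1$. To make either route rigorous you need two things your sketch does not supply: (1) a pointwise domination uniform in the approximation parameter, and (2) the a priori integrability $N_k(\cdot,y)\in L^1(\s,\omega_k\sigma)$. For (1) the paper exploits the structure of $C(y)$: writing $v=\sum_{g\in W}\lambda_g(v)\,gy$ gives $|rz|^2+|y|^2-2\langle rz,v\rangle=\sum_g\lambda_g(v)|rz-gy|^2$, and since $|z|=|gy|=1$ one has $|rz-gy|\ge r|z-gy|$, whence $N_k(rz,y)\le r^{2-2\gamma-d}N_k(z,y)$. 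For (2) the paper first applies Fatou's lemma to the same dilation identity to obtain $P_k[N_k(\cdot,y)](x)\le N_k(x,y)<\infty$, which is precisely the integrability needed to invoke dominated convergence. If you instead approach from $y=\rho y_0$ with $\rho>1$, an analogous domination $N_k(z,\rho y_0)\le N_k(z,y_0)$ for $z\in\s$ does hold (from $1+\rho^2-2\rho\langle z,w\rangle\ge 2-2\langle z,w\rangle$ for $|z|=1$, $|w|\le1$, $\rho\ge1$), but you would still need to establish $N_k(\cdot,y_0)\in L^1(\s,\omega_k\sigma)$ separately, e.g.\ by a Fatou argument from the interior case as in the paper. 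Your ``alternative'' direct Fubini computation at the end is too vague to assess (undefined $y'$, $z'$) and would in any case have to confront the same integrability issue near $W.y_0\cap\s$. In short: the decomposition into cases is right and the idea of approximation is sound, but the two quantitative ingredients (domination and $L^1$ control of the boundary trace) are exactly what the paper's proof supplies and what your write-up is missing.
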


\begin{proof}
For $y\in\BB$ the statement follows from \eqref{GreenDef}. Clearly, $K_k[N_k(\cdot,y)]$ is $\Delta_k$-harmonic on 
$\BB$ and continuous on $\overline\BB$ with $K_k[N_k(\cdot,y)](x)=N_k(x,y)$ for all $x\in\s$. By the uniqueness 
of the solution to the $\Delta_k$-Dirichlet problem \cite{MaYo1} we have $K_k[N_k(\cdot,y)]=P_k[N_k(\cdot,y)]$ on $\BB$. 
When $y\in(\overline\BB)^c$, then $N_k(\cdot,y)$ is $\Delta_k$-harmonic on $\overline\BB$, and hence $N_k(\cdot,y)=P_k[N_k(\cdot,y)]$ 
on $\BB$ in this case. Finally, let $y\in\s$. Since $N_k(\cdot,y)$ is $\Delta_k$-harmonic on $\BB$, the dilation $N_k(\cdot,y)_r$ 
is $\Delta_k$-harmonic on $\overline\BB$ for any $0<r<1$. Hence
$$
N_k(rx,y)=\frac{1}{d_k}\int_\s P_k(x,z)N_k(rz,y)\omega_k(z)d\sigma(z),
$$
and it is enough to show that the right-hand side above tends to $P_k[N_k(\cdot,y)](x)$ as $r\to1$. First note that Fatou's lemma gives $P_k[N_k(\cdot,y)](x)\leq N_k(x,y)$. By \eqref{DunklNewton}, for $z,y\in\s$ we have
$$
N_k(rz,y)=C_k\int_{C(y)}\left(|rz|^2+|y|^2-2\langle rz, v\rangle\right)^{1-\gamma-d/2}d\mu_y(v).
$$
For $v\in C(y)$ write $v=\sum_{g\in W}\lambda_g(v)gy$, where $\lambda_g(v)\geq0$ for all $g\in W$ and $\sum_{g\in W}\lambda_g(v)=1$. This gives
$$
|rz|^2+|y|^2-2\langle rz, v\rangle=\sum_{g\in W}\lambda_g(v)|rz-gy|^2.
$$
Furthermore, since $|z|=|gy|=1$, we have $|rz-gy|\geq |rz-rgy|$ for any $0<r<1$. Consequently,
$$
|rz|^2+|y|^2-2\langle rz, v\rangle\geq r^2(|z|^2+|y|^2-2\langle z, v\rangle),
$$
and $N_k(rz,y)\leq r^{2-2\gamma-d}N_k(z,y)$. Therefore, $N_k(rz,y)\leq CN_k(z,y)$ for all $1/2<r<1$ and 
$P_k[N_k(\cdot,y)](x)\leq N_k(x,y)<\infty$. The dominated convergence theorem gives the result.
\end{proof}

\noindent As a consequence of \eqref{eq:Riesz} and Lemma \ref{lem:PoisNewton} we obtain the following Poisson-Jensen formula.

\begin{thm}\label{thm:JensPois}
Let $u$ be $\Delta_k$-subharmonic on $\overline\BB$. Then for evey $x\in\BB$ we have
$$
u(x)=\frac{1}{d_k}\int_\s P_k(x,y)u(y)\omega_k(y)d\sigma(y)-\int_\BB G_k(x,y)\Delta_k u(y)\omega_k(y)dy.
$$
\end{thm}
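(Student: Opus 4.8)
The plan is to combine the Riesz decomposition \eqref{eq:Riesz} with Lemma~\ref{lem:PoisNewton} and the reproducing property of the Poisson integral for $\Delta_k$-harmonic functions. First I would fix $\varepsilon > 0$ such that $u$ is $\Delta_k$-subharmonic on $\BB_\varepsilon$, and apply \eqref{eq:Riesz} on $\BB_{\varepsilon/2}$: there is a $\Delta_k$-harmonic function $h_\varepsilon$ on $\BB_{\varepsilon/2}$ with
$$
u(x) = -\int_{\BB_{\varepsilon/2}} N_k(x,y)\Delta_k u(y)\omega_k(y)\,dy + h_\varepsilon(x), \quad x \in \BB_{\varepsilon/2}.
$$
Since $\overline\BB \subset \BB_{\varepsilon/2}$ after shrinking $\varepsilon$ if necessary (or rather $h_\varepsilon$ is $\Delta_k$-harmonic on a neighborhood of $\overline\BB$), it is $\Delta_k$-harmonic on $\overline\BB$ in the sense defined above, so by the uniqueness of the $\Delta_k$-Dirichlet problem on $\BB$ (cf. \cite{MaYo1}) together with the fact that $P_k[\,\cdot\,]$ reproduces its boundary values, we have $h_\varepsilon(x) = P_k[h_\varepsilon|_\s](x)$ for $x \in \BB$.

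Next I would split the measure $\Delta_k u(y)\omega_k(y)\,dy$ on $\BB_{\varepsilon/2}$ into its restriction to $\BB$ and its restriction to $\BB_{\varepsilon/2}\setminus\overline\BB$, and apply the Poisson integral $P_k[\,\cdot\,](x)$ in the $x$-variable to both sides of the Riesz formula. Using Lemma~\ref{lem:PoisNewton}, for the part of the integral over $\BB$ the kernel $P_k[N_k(\cdot,y)](x)$ equals $N_k(x,y) - G_k(x,y)$, while for the part over $\BB_{\varepsilon/2}\setminus\overline\BB$ (where $y\notin\BB$) it equals $N_k(x,y)$. Interchanging $P_k[\,\cdot\,]$ with the $y$-integration — justified by Fubini/Tonelli since all quantities involved are controlled by the nonnegative kernels and $\Delta_k u \geq 0$ — yields
$$
P_k[u](x) = -\int_\BB \bigl(N_k(x,y)-G_k(x,y)\bigr)\Delta_k u(y)\omega_k(y)\,dy - \int_{\BB_{\varepsilon/2}\setminus\overline\BB} N_k(x,y)\Delta_k u(y)\omega_k(y)\,dy + h_\varepsilon(x).
$$
Now I would subtract this from the original Riesz identity \eqref{eq:Riesz}: the $h_\varepsilon$ terms cancel, the integral over $\BB_{\varepsilon/2}\setminus\overline\BB$ cancels, the two copies of $\int_\BB N_k(x,y)\Delta_k u(y)\omega_k(y)\,dy$ cancel, and what survives is exactly
$$
u(x) = P_k[u](x) - \int_\BB G_k(x,y)\Delta_k u(y)\omega_k(y)\,dy,
$$
which is the claimed formula upon writing out $P_k[u](x) = \frac{1}{d_k}\int_\s P_k(x,y)u(y)\omega_k(y)\,d\sigma(y)$.

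The main obstacle is the justification of applying the Poisson integral termwise and interchanging it with the $y$-integration: one must verify that $y \mapsto P_k[N_k(\cdot,y)](x)$ is measurable and that the relevant double integrals are finite, so that Fubini applies. Here the sign condition $\Delta_k u \geq 0$ is essential — it makes the integrand nonnegative so that Tonelli's theorem is available without a priori integrability — combined with the bound $P_k[N_k(\cdot,y)](x) \leq N_k(x,y)$ already recorded in the proof of Lemma~\ref{lem:PoisNewton} and the local integrability of $N_k(x,\cdot)\omega_k$ near the compact set $W.x$, which follows from the fact that $-N_k(x,\cdot)\omega_k$ is a fundamental solution. A secondary point is to make precise the sense in which $h_\varepsilon$, originally defined only on $\BB_{\varepsilon/2}$, is $\Delta_k$-harmonic on $\overline\BB$; this is immediate once one notes that $\overline\BB \subset \BB_{\varepsilon/2}$ can be arranged, so $h_\varepsilon$ is $\Delta_k$-harmonic on the strictly larger ball $\BB_{\varepsilon/2}$ and hence reproduced by its own Poisson integral on $\BB$.
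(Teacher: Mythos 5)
Your proposal is correct and takes essentially the same route as the paper: Riesz decomposition \eqref{eq:Riesz}, followed by applying $P_k[\cdot]$, invoking Lemma~\ref{lem:PoisNewton} and Fubini/Tonelli (with $\Delta_k u\geq0$ ensuring nonnegativity of the kernels), and using that the Poisson integral reproduces the $\Delta_k$-harmonic piece $h_\varepsilon$. The only cosmetic difference is that you subtract the Poisson-integrated identity from the Riesz identity, whereas the paper recombines $-\int_\BB N_k - \int_{\BB_{\varepsilon/2}\setminus\BB} N_k + h_\varepsilon$ into $u(x)$ directly inside the chain of equalities; the content is identical.
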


\begin{proof}
Choose $\varepsilon>0$ such that $u$ extends to a $\Delta_k$-subharmonic function on $\BB_\varepsilon$. By \eqref{eq:Riesz},
$$
u(x)=-\int_{\BB_{\varepsilon/2}} N_k(x,y)\Delta_k u(x)\omega_k(x)dx+h_\varepsilon(x),\quad x\in\BB_{\varepsilon/2},
$$
where $h_\varepsilon$ is $\Delta_k$-harmonic on $\BB_{\varepsilon/2}$. Evaluating the Poisson integral of both sides and applying 
Fubini's theorem and Lemma \ref{lem:PoisNewton} we get
\begin{align*}
	P_k[u](x)=&\frac{1}{d_k}\int_\s P_k(x,y)u(y)\omega_k(y)d\sigma(y)\\
	=&\frac{1}{d_k}\int_\s P_k(x,y)\left(-\int_{\BB_{\varepsilon/2}} N_k(y,z)\Delta_k u(z)\omega_k(z)dz+h_\varepsilon(y)\right)\omega_k(y)d\sigma(y)\\
	=&-\int_{\BB_{\varepsilon/2}}\left(\frac{1}{d_k}\int_\s P_k(x,y)N_k(y,z)\omega_k(y)d\sigma(y)\right)\Delta_k u(z)\omega_k(z)dz+h_\varepsilon(x)\\
	=&\int_{\BB}\left(G_k(x,z)-N_k(x,z)\right)\Delta_k u(z)\omega_k(z)dz\\
	&-\int_{\BB_{\varepsilon/2}\setminus\BB}N_k(x,z)\Delta_k u(z)\omega_k(z)dz+h_\varepsilon(x)\\
	=&\int_{\BB}G_k(x,z)\Delta_k u(z)\omega_k(z)dz+u(x).
\end{align*}
\end{proof}
Let $1\leq p\leq\infty$. The Hardy space $H^p_k(\BB)$ is defined as the family of those $\Delta_k$-harmonic functions on $\BB$ which satisfy
$$
\|u\|_{H^p}:=\sup_{0\leq r<1}\|u_r\|_{L^p(\omega_k\sigma)}<\infty.
$$
By \cite[Theorem 2.2 and Theorem 2.3]{MaYo1}, $u\in H^p_k(\BB)$ for a given $1<p\leq\infty$ if and only if 
$u=P_k[f]$ for some $f\in L^p(\s,\omega_k\sigma)$, and in this case $\|u\|_{H^p}=\|f\|_{L^p(\omega_k\sigma)}$. This implies that
\begin{equation}\label{eq:Hpnorm}
\|u\|_{H^p}=\lim_{r\to 1}\|u_r\|_{L^p(\omega_k\sigma)}
\end{equation}
for any $\Delta_k$-harmonic function $u$ on $\BB$. As an application of Theorem \ref{thm:JensPois}, we will give an equivalent characterization of the spaces $H^p_k(\BB)$, $1<p<\infty$, in terms of the Hardy-Stein identities. The approach is inspired by \cite{BDL}, where similar description was obtained for Hardy spaces of the classical Laplacian $\Delta$ and the fractional Laplacian $\Delta^{\alpha/2}$.

Let $1<p<\infty$. For $a, b\in \RR$ we set
\begin{equation}\label{eq:defF}
 F(a,b) = |b|^p-|a|^p - pa|a|^{p-2}(b-a)\,.
\end{equation}
Here $F(a,b)=|b|^p$ if $a=0$, and $F(a,b)=(p-1)|a|^p$ if $b=0$.
For instance, if $p=2$, then $F(a,b)=(b-a)^2$.
Generally, $F(a,b)$ is the second-order Taylor remainder of $\RR\ni x\mapsto |x|^p$, therefore by convexity, $F(a,b)\geq 0$.
Furthermore, for $1<p<\infty$ and 
$\varepsilon\in \RR$ 
we define
\begin{equation}\label{eq:Feps}
F_\varepsilon(a,b) = 
(b^2+\varepsilon^2)^{p/2}-(a^2+\varepsilon^2)^{p/2}
  - pa(a^2+\varepsilon^2)^{(p-2)/2}(b-a)\,.
\end{equation}
Since $F_\varepsilon (a,b)$ is the second-order Taylor remainder of $\RR\ni x\mapsto (x^2+\varepsilon^2)^{p/2}$, by convexity, $F_\varepsilon(a,b)\geq 0$.
Of course, $F_\varepsilon(a,b)\to F_0(a,b)=F(a,b)$ as $\varepsilon\to 0$. The next result is proved in \cite[Lemma 6]{BDL}.
\begin{lem}\label{lem:F}
For every $p>1$ there is a constant $C>0$ depending on $p$ only such that
\begin{equation}\label{elem-ineq}
C^{-1}(b-a)^2(|b|\vee |a|)^{p-2}\le F(a,b) \le
C(b-a)^2(|b|\vee |a|)^{p-2},\quad
a, b\in\RR.
\end{equation} 
If $p\in (1,2)$, then 
\begin{equation}\label{eq:ub}
0\le F_\varepsilon(a,b)\leq \frac{1}{p-1}F(a,b),\quad \varepsilon, a, b\in\RR.
\end{equation}
\end{lem}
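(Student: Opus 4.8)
The plan is to deduce both inequalities from the integral form of the second‑order Taylor remainder. Write $g(x)=|x|^p$ and $g_\varepsilon(x)=(x^2+\varepsilon^2)^{p/2}$. Since $p>1$, both $g'$ and $g_\varepsilon'$ are locally absolutely continuous (for $g'$ this uses $p-2>-1$, so that $g''(x)=p(p-1)|x|^{p-2}$ is locally integrable), hence the fundamental theorem of calculus together with Fubini gives, for all real $a,b$ and even when the segment from $a$ to $b$ passes through $0$,
$$
F(a,b)=(b-a)^2\int_0^1(1-t)\,g''\bigl(a+t(b-a)\bigr)\,dt,\qquad
F_\varepsilon(a,b)=(b-a)^2\int_0^1(1-t)\,g_\varepsilon''\bigl(a+t(b-a)\bigr)\,dt.
$$
A short computation yields $g_\varepsilon''(x)=p\,(x^2+\varepsilon^2)^{p/2-2}\bigl((p-1)x^2+\varepsilon^2\bigr)$.

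For \eqref{eq:ub} this is then essentially automatic. When $1<p<2$ we have $(p-1)x^2+\varepsilon^2\ge 0$, so $g_\varepsilon''\ge 0$ and $F_\varepsilon\ge 0$; moreover, using $0<p-1\le 1$ and $p/2-1\le 0$,
$$
g_\varepsilon''(x)\le p\,(x^2+\varepsilon^2)^{p/2-2}(x^2+\varepsilon^2)=p\,(x^2+\varepsilon^2)^{p/2-1}\le p\,|x|^{p-2}=\tfrac{1}{p-1}\,g''(x),
$$
and integrating both remainder formulas against $(1-t)\,dt$ gives $0\le F_\varepsilon(a,b)\le\tfrac{1}{p-1}F(a,b)$.

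The substance of the lemma is \eqref{elem-ineq}. Both sides are homogeneous of degree $p$ under $(a,b)\mapsto(\lambda a,\lambda b)$, so I may normalize $M:=|a|\vee|b|=1$; then the segment $\{a+t(b-a):t\in[0,1]\}$ lies in $[-1,1]$, and by the Taylor representation it suffices to show
$$
\int_0^1(1-t)\,\bigl|a+t(b-a)\bigr|^{p-2}\,dt\asymp 1,
$$
with constants depending on $p$ only. Two of the four required bounds are immediate from the inclusion of the segment in $[-1,1]$: if $p\ge 2$ then $|a+t(b-a)|^{p-2}\le 1$ there, which gives the upper bound of $F$, and if $1<p<2$ then $|a+t(b-a)|^{p-2}\ge 1$ there, which gives the lower bound. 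For the lower bound when $p\ge 2$ I would keep the weight $1-t$ of order one: the triangle inequality gives $|a+t(b-a)|\ge 1-2t$ when $|a|=1$ and $|a+t(b-a)|\ge 2t-1$ when $|b|=1$, so $|a+t(b-a)|\ge\tfrac12$ either on $[0,\tfrac14]$ or on $[\tfrac34,1]$, and the integral over that subinterval is bounded below by a positive constant depending only on $p$. For the upper bound when $1<p<2$ I would split on the segment length: if $|b-a|\le\tfrac12$ the whole segment stays at distance $\ge\tfrac12$ from $0$, so $|a+t(b-a)|^{p-2}\le 2^{2-p}$; if $|b-a|>\tfrac12$ the crude estimate
$$
\int_0^1\bigl|a+t(b-a)\bigr|^{p-2}\,dt=\frac{1}{|b-a|}\int_{a\wedge b}^{a\vee b}|s|^{p-2}\,ds\le\frac{1}{|b-a|}\cdot\frac{2}{p-1}\le\frac{4}{p-1}
$$
already suffices.

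The main obstacle is precisely this last case analysis for \eqref{elem-ineq}. The asymmetric weight $1-t$ in the Taylor remainder underweights a neighbourhood of $b$, so when $|b|>|a|$ and $p>2$ the integrand $|a+t(b-a)|^{p-2}$ is largest exactly where the weight is smallest, and one must locate a fixed subinterval of $t$, bounded away from $1$, on which both the weight and the integrand are of order one; this is what dictates the choice of subintervals above. The opposite behaviour of $x\mapsto|x|^{p-2}$ near the origin for $1<p<2$ versus $p>2$ is what forces the two ranges of $p$ to be treated separately throughout.
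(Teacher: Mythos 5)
The paper does not give its own proof of this lemma: it simply states ``The next result is proved in \cite[Lemma~6]{BDL}'' and cites the reference. There is therefore no internal proof to compare against, and the value of your argument is precisely that it supplies a self-contained verification of the cited fact. I checked it in detail and it is correct. The integral form of the second-order Taylor remainder is applicable because $g'(x)=p\,x|x|^{p-2}$ is locally absolutely continuous for $p>1$ (equivalently $g''(x)=p(p-1)|x|^{p-2}\in L^1_{\mathrm{loc}}$), and the computation $g_\varepsilon''(x)=p(x^2+\varepsilon^2)^{p/2-2}\bigl((p-1)x^2+\varepsilon^2\bigr)$ together with the chain $g_\varepsilon''\le p(x^2+\varepsilon^2)^{p/2-1}\le p|x|^{p-2}=\tfrac{1}{p-1}\,g''$ for $1<p<2$ gives \eqref{eq:ub} cleanly. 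For \eqref{elem-ineq}, the degree-$p$ homogeneity reduction to $|a|\vee|b|=1$ is legitimate (the case $a=b=0$ being trivial), the two easy bounds follow directly from the segment lying in $[-1,1]$, the $p\ge 2$ lower bound works because $|a+t(b-a)|\ge\tfrac12$ on one of the fixed subintervals $[0,\tfrac14]$ or $[\tfrac34,1]$ where $1-t$ is bounded below, and the $1<p<2$ upper bound is correctly handled by splitting on $|b-a|\lessgtr\tfrac12$, the crucial point being the substitution $\int_0^1|a+t(b-a)|^{p-2}\,dt=\frac{1}{|b-a|}\int_{a\wedge b}^{a\vee b}|s|^{p-2}\,ds\le\frac{2}{(p-1)|b-a|}$. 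Your closing remark about the asymmetry of the weight $1-t$ is exactly the right thing to be wary of, and your case split handles it.
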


\noindent The following explicit formulas shed some light on the meaning of the function $F$.

\begin{lem}\label{lem:PthPower}
Let $u$ be of class $C^2$ in the neighborhood of $x\in\RR^d$. Then for $2\leq p<\infty$ we have
\begin{align}\nonumber
\Delta_k|u(x)|^p=p(p-1)|u(x)|^{p-2}|\nabla u(x)|^2+&2\sum_{\alpha\in R_+}k(\alpha)\frac{F(u(x),u(\sigma_\alpha(x)))}{\langle\alpha,x\rangle^2}\\
+&pu(x)|u(x)|^{p-2}\Delta_ku(x).\label{eq:PthPower1}
\end{align}
When $1<p<\infty$ and $\varepsilon>0$, then
\begin{align}\label{eq:PthPower2}
\Delta_k|u(x)+i\varepsilon|^p&=p|u(x)+i\varepsilon|^{p-4}\left[(p-1)u(x)^2+\varepsilon^2\right]|\nabla u(x)|^2\\
+&2\sum_{\alpha\in R_+}k(\alpha)\frac{F_{\varepsilon}(u(x),u(\sigma_\alpha(x)))}{\langle\alpha,x\rangle^2}
+pu(x)|u(x)+i\varepsilon|^{p-2}\Delta_ku(x).\nonumber
\end{align}
\end{lem}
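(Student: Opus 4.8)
The plan is to deduce both formulas from a single computation: evaluating $\Delta_k(\phi\circ u)$ where $\phi(t)=|t|^p$ in the first case and $\phi(t)=(t^2+\varepsilon^2)^{p/2}$ in the second, so that $|u|^p=\phi\circ u$ and $|u+i\varepsilon|^p=(u^2+\varepsilon^2)^{p/2}=\phi\circ u$ for the (real-valued) function $u$. The first point to settle is regularity: for $p\ge2$ the function $t\mapsto|t|^p$ belongs to $C^2(\RR)$, with $\phi'(t)=pt|t|^{p-2}$ and $\phi''(t)=p(p-1)|t|^{p-2}$, whereas $t\mapsto(t^2+\varepsilon^2)^{p/2}$ is smooth for every $\varepsilon>0$; in either case $\phi\circ u$ is of class $C^2$ near $x$, so that $\Delta_k(\phi\circ u)(x)$ is defined.

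Next I would split $\Delta_k=\mathcal L+\mathcal D$ into its differential part $\mathcal L$ — whose second-order part is the ordinary Laplacian $\Delta$ and which carries the first-order terms $k(\alpha)\langle\nabla f,\alpha\rangle/\langle\alpha,x\rangle$ — and its reflection-difference part $\mathcal D$, as written in Section 2. Because the second-order coefficients of $\mathcal L$ are $\delta_{ij}$, the classical chain rule gives $\mathcal L(\phi\circ u)=\phi'(u)\,\mathcal L u+\phi''(u)\,|\nabla u|^2$; this follows at once from $\partial_i(\phi\circ u)=\phi'(u)\partial_i u$ and $\partial_i^2(\phi\circ u)=\phi''(u)(\partial_i u)^2+\phi'(u)\partial_i^2 u$. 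For the difference part one has simply $\mathcal D(\phi\circ u)(x)$ equal to $\mathcal D$ with $f$ replaced by $\phi\circ u$, i.e. the same reflection-difference weights of $\Delta_k$ now acting on $\phi(u(\sigma_\alpha x))-\phi(u(x))$ instead of on $u(\sigma_\alpha x)-u(x)$.

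Adding the two contributions and substituting $\mathcal L u=\Delta_k u-\mathcal D u$, the terms proportional to $\phi'(u(x))$ reassemble into $\phi'(u(x))\,\Delta_k u(x)$, and one is left with $\phi''(u(x))|\nabla u(x)|^2$ plus a sum over $\alpha\in R_+$ of the reflection-difference weights of $\Delta_k$ applied to $\phi(u(\sigma_\alpha x))-\phi(u(x))-\phi'(u(x))\bigl(u(\sigma_\alpha x)-u(x)\bigr)$. This quantity is precisely the second-order Taylor remainder of $\phi$ about $u(x)$ evaluated at $u(\sigma_\alpha x)$; by the definitions \eqref{eq:defF} and \eqref{eq:Feps} it equals $F\bigl(u(x),u(\sigma_\alpha x)\bigr)$ for $\phi(t)=|t|^p$ and $F_\varepsilon\bigl(u(x),u(\sigma_\alpha x)\bigr)$ for $\phi(t)=(t^2+\varepsilon^2)^{p/2}$. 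It then suffices to insert the explicit derivatives — $\phi'(u)=pu|u|^{p-2}$, $\phi''(u)=p(p-1)|u|^{p-2}$ in the first case, and $\phi'(u)=pu|u+i\varepsilon|^{p-2}$, $\phi''(u)=p|u+i\varepsilon|^{p-4}\bigl[(p-1)u^2+\varepsilon^2\bigr]$ in the second (obtained by a one-line differentiation of $(t^2+\varepsilon^2)^{p/2}$) — to arrive at \eqref{eq:PthPower1} and \eqref{eq:PthPower2}.

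The one place where care is needed — and the reason \eqref{eq:PthPower1} is restricted to $p\ge2$ — is the regularity of $t\mapsto|t|^p$: for $1<p<2$ it fails to be twice differentiable at the origin, so the chain rule for the differential part of $\Delta_k$ is not available there; the smooth regularization $(t^2+\varepsilon^2)^{p/2}$ removes this difficulty and is exactly what leads to \eqref{eq:PthPower2} for all $p>1$. Everything else — the chain rule for $\mathcal L$, the elementary action of $\mathcal D$ on $\phi\circ u$, the regrouping reconstructing $\Delta_k u$, and the identification of the leftover difference expression with $F$ and $F_\varepsilon$ via \eqref{eq:defF}--\eqref{eq:Feps} — is routine.
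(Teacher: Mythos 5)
Your proposal is correct and takes essentially the same approach as the paper: it computes $\Delta_k(\phi\circ u)$ by applying the chain rule to the differential part and recognizing $\phi(u(\sigma_\alpha x))-\phi(u(x))-\phi'(u(x))(u(\sigma_\alpha x)-u(x))$ as the second-order Taylor remainder $F$ (resp.\ $F_\varepsilon$), then reassembling the $\phi'(u(x))$-terms into $\phi'(u(x))\Delta_k u(x)$. The paper carries out the same regrouping with the derivatives written out explicitly rather than via a general $\phi$, so the two arguments are computationally identical.
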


\begin{proof}
When $2\leq p<\infty$ or $u(x)\neq0$ we write $|u(x)|^p=(u(x)^2)^{p/2}$ and a straightforward calculation gives
\begin{align*}
\nabla|u(x)|^p&=pu(x)|u(x)|^{p-2}\nabla u(x),\\
\Delta|u(x)|^p&=p(p-1)|u(x)|^{p-2}|\nabla u(x)|^2+pu(x)|u(x)|^{p-2}\Delta u(x).
\end{align*}
Note that
$$
|u(\sigma_\alpha(x))|^p-|u(x)|^p=F(u(x),u(\sigma_\alpha(x)))+pu(x)|u(x)|^{p-2}(u(\sigma_\alpha(x))-u(x)).
$$
Hence
\begin{align*}
\Delta_k|u(x)|^p&=\Delta|u(x)|^p+2\sum_{\alpha\in R_+}k(\alpha)\left(\frac{\langle\nabla|u(x)|^p,\alpha\rangle}{\langle\alpha,x\rangle}+\frac{|u(\sigma_\alpha(x))|^p-|u(x)|^p}{\langle\alpha,x\rangle^2}\right)\\
&=p(p-1)|u(x)|^{p-2}|\nabla u(x)|^2+pu(x)|u(x)|^{p-2}\Delta u(x)\\
&+2pu(x)|u(x)|^{p-2}\sum_{\alpha\in R_+}k(\alpha)\left(\frac{\langle\nabla u(x),\alpha\rangle}{\langle\alpha,x\rangle}+\frac{(u(\sigma_\alpha(x))-u(x))}{\langle\alpha,x\rangle^2}\right)\\
&+2\sum_{\alpha\in R_+}k(\alpha)\frac{F(u(x),u(\sigma_\alpha(x)))}{\langle\alpha,x\rangle^2},
\end{align*}
and \eqref{eq:PthPower1} follows. For $1<p<\infty$ and $\varepsilon>0$ we have 
\begin{align*}
\nabla|u(x)+i\varepsilon|^{p}&=pu(x)|u(x)+i\varepsilon|^{p-2}\nabla u(x),\\
\Delta|u(x)+i\varepsilon|^{p}&=p|u(x)+i\varepsilon|^{p-4}\left[(p-1)u(x)^2+\varepsilon^2\right]|\nabla u(x)|^2\\
&+p|u(x)+i\varepsilon|^{p-2}u(x)\Delta u(x),
\end{align*}
and
\begin{align*}
|u(\sigma_\alpha(x))+i\varepsilon|^p-|u(x)+i\varepsilon|^p&=F_\varepsilon(u(x),u(\sigma_\alpha(x)))\\
&+pu(x)|u(x)+i\varepsilon|^{p-2}(u(\sigma_\alpha(x))-u(x)).
\end{align*}
The rest of the proof is similar to the previous case.
\end{proof}

\noindent We are now ready to prove the Hardy-Stein identities.

\begin{thm}\label{thm:HardyStein}
Let $1<p<\infty$. Then for any $u\in H^p_k(\BB)$ we have
\begin{align*}
\|u\|^p_{H^p}=|u(0)|^p+C_k\int_\BB (|y|^{2-2\gamma-d}-1)&[p(p-1)|u(y)|^{p-2}|\nabla u(y)|^2\\
+2\sum_{\alpha\in R_+}&k(\alpha)\frac{F(u(y),u(\sigma_\alpha(y)))}{\langle\alpha,y\rangle^2}]\omega_k(y)dy.
\end{align*}
In fact, a $\Delta_k$-harmonic function $u$ on $\BB$ belongs to $H^p_k(\BB)$ if and only if the integral above is finite.
\end{thm}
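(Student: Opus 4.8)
The plan is to apply the Poisson--Jensen formula (Theorem~\ref{thm:JensPois}) to the $\Delta_k$-subharmonic function $|u|^p$ (when $2\le p<\infty$) or to $|u+i\varepsilon|^p$ followed by a limiting argument (when $1<p<2$), and then to identify the resulting boundary integral with $\|u_r\|_{L^p(\omega_k\sigma)}^p$ via \eqref{eq:Hpnorm}. First I would fix $u\in H^p_k(\BB)$; by \eqref{eq:Hpnorm} it suffices to compute $\lim_{r\to 1}\|u_r\|_{L^p(\omega_k\sigma)}^p$. For $0<r<1$ the dilate $u_r$ is $\Delta_k$-harmonic on $\overline\BB$, hence (for $2\le p<\infty$) $|u_r|^p$ is $\Delta_k$-subharmonic on $\overline\BB$ by Lemma~\ref{lem:PthPower}, since the first two terms on the right of \eqref{eq:PthPower1} are nonnegative and the last term vanishes. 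Applying Theorem~\ref{thm:JensPois} to $|u_r|^p$ at $x=0$, and using $P_k(0,y)=1$ together with $\int_\s \omega_k\, d\sigma = d_k$, gives
\begin{equation}\nonumber
\|u_r\|_{L^p(\omega_k\sigma)}^p = |u(0)|^p + \int_\BB G_k(0,y)\,\Delta_k|u_r|^p(y)\,\omega_k(y)\,dy .
\end{equation}
From \eqref{GreenBallDiff} with $x=0$ one computes $G_k(0,y) = C_k(|y|^{2-2\gamma-d}-1)$ (the measure $\mu_y$ is a probability measure and the integrand is constant in $z$), so plugging in the expression for $\Delta_k|u_r|^p$ from Lemma~\ref{lem:PthPower} yields the stated identity with $u$ replaced by $u_r$ and $\BB$ understood as the domain of the rescaled integrand.

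Next I would let $r\to 1$. The left side converges to $\|u\|_{H^p}^p$ by \eqref{eq:Hpnorm}. For the right side, after the change of variables $y\mapsto y/r$ (or by writing the integral over $r\BB$ directly), the integrand is nonnegative, so I would use monotone convergence: one must check that as $r\uparrow 1$ the integrands increase, or at least that Fatou plus a matching upper bound applies. The cleanest route is to note that $G_k(0,\cdot)$ is, up to the constant $C_k$, the function $|y|^{2-2\gamma-d}-1$ which is positive on $\BB$, and that the integrals $\int_{r\BB}$ are monotone in $r$ provided the relevant densities are handled correctly; I would therefore prefer to run the argument directly on $\BB$ by applying Theorem~\ref{thm:JensPois} to $|u_r|^p$ viewed as $\Delta_k$-subharmonic on $\overline\BB$ and then passing $r\to 1$ inside $\int_\BB G_k(0,y)[\,\cdots\,]\,\omega_k\,dy$ using Fatou for the liminf and the already-established finiteness $\|u\|_{H^p}^p<\infty$ for the reverse bound. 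This gives the identity in the stated form, and simultaneously shows that if $u\in H^p_k(\BB)$ then the integral on the right is finite.

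For $1<p<2$ the function $|u|^p$ need not be $C^2$ at zeros of $u$, so I would instead apply the above to $|u_r+i\varepsilon|^p$, which is smooth and $\Delta_k$-subharmonic by \eqref{eq:PthPower2} (again the gradient term is nonnegative, the Dunkl-difference term involves $F_\varepsilon\ge 0$, and the $\Delta_k u_r$ term vanishes). This produces
\begin{equation}\nonumber
\|\,|u_r+i\varepsilon|\,\|_{L^p(\omega_k\sigma)}^p = |u(0)+i\varepsilon|^p + C_k\int_\BB(|y|^{2-2\gamma-d}-1)\Big[p|u_r+i\varepsilon|^{p-4}\big((p-1)u_r^2+\varepsilon^2\big)|\nabla u_r|^2 + 2\sum_{\alpha\in R_+}k(\alpha)\frac{F_\varepsilon(u_r,u_r\circ\sigma_\alpha)}{\langle\alpha,y\rangle^2}\Big]\omega_k(y)\,dy .
\end{equation}
Then I would let $\varepsilon\to 0$: on the left, dominated convergence (bounding $|u_r+i\varepsilon|^p\le (|u_r|+1)^p$ on the compact sphere) gives $\|u_r\|_{L^p(\omega_k\sigma)}^p$; on the right, the bound \eqref{eq:ub} from Lemma~\ref{lem:F}, $0\le F_\varepsilon\le (p-1)^{-1}F$, together with $|u_r+i\varepsilon|^{p-4}((p-1)u_r^2+\varepsilon^2)\le |u_r|^{p-2}$ pointwise, furnishes an $\varepsilon$-independent integrable majorant, so dominated convergence applies and the right side tends to its $\varepsilon=0$ analogue. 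Finally I would let $r\to 1$ exactly as in the case $p\ge 2$.

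The main obstacle is the double limit $\varepsilon\to 0$, $r\to 1$ on the right-hand side: one needs a genuinely $r$- and $\varepsilon$-uniform integrable majorant, or a careful monotone/Fatou argument, to justify interchanging the limits with $\int_\BB$, and in the $r\to 1$ step the singularity of $G_k(0,y)\omega_k(y)\sim |y|^{2-2\gamma-d}\omega_k(y)$ near the origin must be controlled against the behaviour of $|\nabla u_r|^2$ and the Dunkl-difference quotients there. Handling the "if" direction of the final sentence (finiteness of the integral implying $u\in H^p_k(\BB)$) also requires running the Poisson--Jensen identity for $|u_r|^p$ and using the finiteness of the right-hand integral — which dominates the $r$-truncated integrals — to conclude $\sup_r\|u_r\|_{L^p(\omega_k\sigma)}^p<\infty$, i.e.\ $u\in H^p_k(\BB)$; here one invokes Fatou in the direction that bounds the $H^p$ norm from above by $|u(0)|^p$ plus the full integral.
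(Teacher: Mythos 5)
Your proposal follows essentially the same route as the paper: apply the Poisson--Jensen formula (Theorem~\ref{thm:JensPois}) to the $\Delta_k$-subharmonic functions $|u_r|^p$ (or $|u_r+i\varepsilon|^p$ when $1<p<2$) at $x=0$, use $G_k(0,y)=C_k(|y|^{2-2\gamma-d}-1)$ and Lemma~\ref{lem:PthPower}, and pass to the limit $r\to1$ (and $\varepsilon\to0$) with \eqref{eq:Hpnorm} and Lemma~\ref{lem:F}. Your domination bound $(a^2+\varepsilon^2)^{(p-4)/2}\bigl((p-1)a^2+\varepsilon^2\bigr)\leq |a|^{p-2}$ for the gradient term and the use of \eqref{eq:ub} for the $F_\varepsilon$ term are correct and handle the $\varepsilon\to0$ limit cleanly.

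There is, however, a genuine gap in the $r\to1$ step as you have sketched it. You offer two routes and state a preference for the second: keeping the integral as $\int_\BB G_k(0,y)\,\Delta_k|u_r|^p(y)\,\omega_k(y)\,dy$, using Fatou for the liminf and ``the already-established finiteness $\|u\|_{H^p}^p<\infty$ for the reverse bound.'' Finiteness alone does not yield the reverse inequality: Fatou only gives $\|u\|_{H^p}^p-|u(0)|^p\geq C_k\int_\BB(|y|^{2-2\gamma-d}-1)\Delta_k|u(y)|^p\omega_k(y)\,dy$, and to close the gap one needs the $r$-uniform upper bound $\int_\BB G_k(0,y)\Delta_k|u_r|^p\omega_k\,dy \leq C_k\int_\BB(|z|^{2-2\gamma-d}-1)\Delta_k|u(z)|^p\omega_k(z)\,dz$. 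This is precisely what your first route provides and what the paper uses: since $(\Delta_k v_r)(y)=r^2(\Delta_k v)(ry)$, the change of variables $z=ry$ turns the integral into $C_k\int_{B(0,r)}(|z|^{2-2\gamma-d}-r^{2-2\gamma-d})\Delta_k|u(z)|^p\omega_k(z)\,dz$, where the domain $B(0,r)$ and the weight $|z|^{2-2\gamma-d}-r^{2-2\gamma-d}$ both increase monotonically as $r\uparrow1$, so monotone convergence applies directly and also furnishes the needed upper bound (as well as the ``if'' direction of the final statement). You should commit to this change-of-variables computation rather than the Fatou-plus-finiteness route. One small point you flag as an obstacle is actually harmless: near the origin $|y|^{2-2\gamma-d}\omega_k(y)$ is comparable to $|y|^{2-d}$ times a bounded angular factor, hence locally integrable, while $\Delta_k|u|^p$ is continuous there.
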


\begin{proof}
Suppose $v$ is $\Delta_k$-subharmonic on $\BB$. Then $v_r$ is $\Delta_k$-subharmonic on $\overline\BB$ for any $0<r<1$. By Theorem~\ref{thm:JensPois},
\begin{equation}\label{eq:HSdilation}
v(0)=\frac{1}{d_k}\int_\s v_r(y)\omega_k(y)d\sigma(y)-\int_\BB G_k(0,y)(\Delta_k v_r)(y)\omega_k(y)dy.
\end{equation}
Since $(\Delta_k v_r)(x)=r^2(\Delta_k v)_r(x)$, by \eqref{GreenBallDiff} we have
\begin{align}\nonumber
&\int_\BB G_k(0,y)(\Delta_k v_r)(y)\omega_k(y)dy=C_kr^2\int_\BB (|y|^{2-2\gamma-d}-1)(\Delta_k v)(ry)\omega_k(y)dy\\
&=C_k\int_{B(0,r)} (|z|^{2-2\gamma-d}-r^{2-2\gamma-d})\Delta_k v(z)\omega_k(z)dz,\label{eq:GreenDilation}
\end{align}
where $B(0,r):=\{x\in\RR^d:|x|<r\}$. Let now $u$ be $\Delta_k$-harmonic on $\BB$ and suppose first $2\leq p<\infty$. Then $|u|^p$ is of class $C^2$ on $\BB$ and by \eqref{eq:PthPower1} we have 
$$
\Delta_k|u(x)|^p=p(p-1)|u(x)|^{p-2}|\nabla u(x)|^2+2\sum_{\alpha\in R_+}k(\alpha)\frac{F(u(x),u(\sigma_\alpha(x)))}{\langle\alpha,x\rangle^2}.
$$
In particular, $\Delta_k|u|^p\geq0$ on $\BB$ so \eqref{eq:HSdilation} and \eqref{eq:GreenDilation} apply to $v=|u|^p$. Let $r\to1$. By \eqref{eq:Hpnorm},
$$
\frac{1}{d_k}\int_\s |u(ry)|^p\omega_k(y)d\sigma(y)\to\|u\|_{H^p}^p,
$$
and by the monotone convergence,
\begin{align*}
C_k\int_{B(0,r)} (|y|^{2-2\gamma-d}-r^{2-2\gamma-d})&\Delta_k|u(z)|^p\omega_k(z)dz\\
&\to\int_{\BB}G_k(0,z)\Delta_k|u(z)|^p\omega_k(z)dz.
\end{align*}
This gives the result for $p\geq2$. Assume now $1<p<2$ and let $\varepsilon>0$. Then $|u+i\varepsilon|^p$ is of class $C^2$ on $\BB$ and by \eqref{eq:PthPower2} we have
\begin{align*}
\Delta_k|u(x)+i\varepsilon|^p=p|u(x)+i\varepsilon|^{p-4}&\left[(p-1)u(x)^2+\varepsilon^2\right]|\nabla u(x)|^2\\
+&2\sum_{\alpha\in R_+}k(\alpha)\frac{F_{\varepsilon}(u(x),u(\sigma_\alpha(x)))}{\langle\alpha,x\rangle^2}.
\end{align*}
Since $\Delta_k|u+i\varepsilon|^p\geq0$ on $\BB$, we can apply \eqref{eq:HSdilation} and \eqref{eq:GreenDilation} to $v=|u+i\varepsilon|^p$. This gives
\begin{align*}
|u(0)+i\varepsilon|^p=&\frac{1}{d_k}\int_\s |u(ry)+i\varepsilon|^p\omega_k(y)d\sigma(y)\\
-&C_k\int_{B(0,r)} (|y|^{2-2\gamma-d}-r^{2-2\gamma-d})\Delta_k|u(y)+i\varepsilon|^p\omega_k(y)dy.
\end{align*}
Let $\varepsilon\to0$. Then 
$$
\Delta_k|u(x)+i\varepsilon|^p\to p(p-1)|u(x)|^{p-2}|\nabla u(x)|^2+2\sum_{\alpha\in R_+}k(\alpha)\frac{F(u(x),u(\sigma_\alpha(x)))}{\langle\alpha,x\rangle^2}
$$
for a.e. $x\in\BB$, and 
$$
\int_\s |u(ry)+i\varepsilon|^p\omega_k(y)d\sigma(y)\to\int_\s |u(ry)|^p\omega_k(y)d\sigma(y).
$$
Fatou's lemma, \eqref{eq:ub} and dominated convergence give 
\begin{align*}
|u(0)|^p&=\frac{1}{d_k}\int_\s |u(ry)|^p\omega_k(y)d\sigma(y)-C_k\int_{B(0,r)} (|y|^{2-2\gamma-d}-r^{2-2\gamma-d})\\
\times&[p(p-1)|u(x)|^{p-2}|\nabla u(x)|^2+2\sum_{\alpha\in R_+}k(\alpha)\frac{F(u(x),u(\sigma_\alpha(x)))}{\langle\alpha,x\rangle^2}]\omega_k(y)dy.
\end{align*}
Let $r\to1$. The final conclusion follows from \eqref{eq:Hpnorm} and monotone convergence.
\end{proof}

\noindent An immediate consequence of Theorem \ref{thm:HardyStein} and \cite[Theorem 2.2 and Theorem 2.3]{MaYo1} is the following identity.

\begin{cor}
Let $1<p<\infty$, $f\in L^p(\s,\omega_k\sigma)$ and set $u=P_k[f]$. Then
\begin{align*}
\int_\s|f(x)|^p\omega_k(x)d\sigma(x)=&|u(0)|^p+C_k\int_\BB (|y|^{2-2\gamma-d}-1)\\
\times[p(p-1)|u(y)|^{p-2}&|\nabla u(y)|^2
+2\sum_{\alpha\in R_+}k(\alpha)\frac{F(u(y),u(\sigma_\alpha(y)))}{\langle\alpha,y\rangle^2}]\omega_k(y)dy.
\end{align*}
\end{cor}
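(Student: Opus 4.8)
The plan is to combine Theorem \ref{thm:HardyStein} with the structure theory of Hardy spaces provided by \cite[Theorem 2.2 and Theorem 2.3]{MaYo1}. Indeed, for $1<p<\infty$ the latter asserts that $u = P_k[f]$ with $f\in L^p(\s,\omega_k\sigma)$ is equivalent to $u\in H^p_k(\BB)$, and moreover $\|u\|_{H^p} = \|f\|_{L^p(\omega_k\sigma)}$. Therefore the left-hand side of the claimed identity is exactly $\|f\|_{L^p(\omega_k\sigma)}^p = \|u\|_{H^p}^p$.

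First I would invoke the equivalence from \cite{MaYo1}: since $f\in L^p(\s,\omega_k\sigma)$ with $1<p<\infty$, the Poisson integral $u = P_k[f]$ is a $\Delta_k$-harmonic function on $\BB$ belonging to $H^p_k(\BB)$, and its Hardy-space norm satisfies $\|u\|_{H^p}^p = \int_\s |f(x)|^p\omega_k(x)d\sigma(x)$. Second, I would apply Theorem \ref{thm:HardyStein} to this function $u\in H^p_k(\BB)$, which yields
\[
\|u\|_{H^p}^p = |u(0)|^p + C_k\int_\BB (|y|^{2-2\gamma-d}-1)\Big[p(p-1)|u(y)|^{p-2}|\nabla u(y)|^2 + 2\sum_{\alpha\in R_+}k(\alpha)\frac{F(u(y),u(\sigma_\alpha(y)))}{\langle\alpha,y\rangle^2}\Big]\omega_k(y)dy.
\]
Equating the two expressions for $\|u\|_{H^p}^p$ gives precisely the asserted identity.

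In fact there is essentially no obstacle here: the corollary is a direct substitution combining two already-established results, and the only point requiring a word is that $u(0) = P_k[f](0)$ is well-defined and finite (which is automatic since $P_k(0,\cdot)$ is a bounded continuous function on $\s$ — indeed $P_k(0,y) = 1$ by \eqref{PoissonRoesler}, so $u(0) = \frac{1}{d_k}\int_\s f(z)\omega_k(z)d\sigma(z)$), and that the integrand on the right is the one appearing in Theorem \ref{thm:HardyStein}, interpreted via the conventions for $F$ at zero values established after \eqref{eq:defF}. Thus the proof is a two-line citation argument, and I would present it as such.
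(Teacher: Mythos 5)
Your argument is exactly the one the paper intends: the corollary is stated as an immediate consequence of Theorem \ref{thm:HardyStein} together with \cite[Theorem 2.2 and Theorem 2.3]{MaYo1}, which give $u=P_k[f]\in H^p_k(\BB)$ and $\|u\|_{H^p}=\|f\|_{L^p(\omega_k\sigma)}$, and the paper offers no further proof. Your proposal is correct and follows the paper's approach.
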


\section{Sharp estimates of the Green function and\\ Poisson kernel in rank one}

In this part we consider the rank one case. The basic situation is that of the root system 
$A_1 = \{\pm(e_1-e_2)\}$
in $\mathbb R^2$, where $e_1, e_2$ denote the standard basis vectors. We choose $\alpha = e_1 - e_2$ as
positive root and let $\sigma_\alpha(x_1,x_2)=(x_2,x_1)$ denote the reflection corresponding to $\alpha$. To simplify formulas, it will be convenient to switch to the orthonormal basis
$$
(e_1',e_2')=(\frac{1}{\sqrt{2}}(e_1-e_2), \frac{1}{\sqrt{2}}(e_1+e_2))
$$
and write $x\in \mathbb R^2$ as $x = (x_1, x_2)$ with coordinates $x_1,
x_2$ with respect to the
basis $(e_1',e_2')$. The reflection $\sigma$ writes
$\sigma(x_1, x_2) = (-x_1, x_2)$. By formula \eqref{V_dimension_one} we obtain
$$
V_kf(y)=c_k\int_{-1}^1 f(ty_1,y_2)(1-t)^{k-1}(1+t)^k dt.
$$
This case has a nice motivation, namely the potential theory of a $2$-dimensional $k$-Dyson
Brownian Motion, which corresponds to the $W$-invariant Dunkl process in this case.

More generally, we will consider the rank one case with root system $A_1$ in $\RR^d$, with the intertwining operator given by
\begin{equation}\label{IntertwinerRank1}
V_kf(y)=c_k\int_{-1}^1 f(ty_1,y_2,...,y_d)(1-t)^{k-1}(1+t)^k dt.
\end{equation}
Though this generalization seems elementary from the algebraic point of view, it
reveals nontrival analytic phenomena which are strongly dependent on the underlying dimension. Note that in the rank one case we have $\gamma=k$, and as before we work under the assumption $d+2k>2$. 

The Newton kernel \eqref{DunklNewton} can be written as
\begin{equation}\label{DunklNewtonR1}
N_k(x,y)=\widetilde C_k\int_{-1}^1\frac{(1-t)^{k-1}(1+t)^kdt}{\left(|x|^2+|y|^2-2(tx_1y_1+x_2y_2+...+x_dy_d)\right)^{k+d/2-1}},
\end{equation}
where $\widetilde C_k=c_kC_k$ and the constants $c_k$, $C_k$ were defined in \eqref{V_dimension_one} and \eqref{constant_C_k}. The reflection $\sigma$ writes
$$
\sigma(x_1,x_2,...,x_d)=(-x_1,x_2,...,x_d).
$$
We then have
\begin{align}\label{NormId}
|x|^2+|y|^2-2(tx_1y_1+x_2y_2+...+x_dy_d)=&|x-y|^2+2x_1y_1(1-t)\\
=&|x-\sigma y|^2-2x_1y_1(1+t).\nonumber
\end{align}
Our first result in this section characterizes the asymptotic behaviour of the Newton kernel $N_k(x,y)$.

\begin{thm}\label{thm:NewtonEst}
Let $\Phi(x,y):=|x-y|\vee|x-\sigma y|$. The two-sided bound of $N_k(x,y)$ on $\RR^d\times\RR^d$ is the following.
\begin{enumerate}
\item[1.] If $d=2$, then
\begin{equation}\label{est:NewtonD2}
N_k(x,y)\asymp \frac{1}{\Phi(x,y)^{2k}}\left[1\vee\log\left(\frac{|x_1y_1|}{|x- y|^2}\right)\right].
\end{equation}
\item[2.] If $d=3$, then
\begin{equation}\label{est:NewtonD3}
N_k(x,y)\asymp \frac{1}{\Phi(x,y)^{2k}|x-y|}.
\end{equation}
\item[3.] If $d=4$, then
\begin{equation}\label{est:NewtonD4}
N_k(x,y)\asymp \frac{1}{\Phi(x,y)^{2k}|x-y|^2}\left[1\vee\log\left(\frac{|x_1y_1|}{|x-\sigma y|^2}\right)\right].
\end{equation}
\item[4.] If $d\geq5$, then
\begin{equation}\label{est:NewtonD5}
N_k(x,y)\asymp \frac{1}{\Phi(x,y)^{2k}|x-y|^2(|x-y|\wedge|x-\sigma y|)^{d-4}}.
\end{equation}
\end{enumerate}
\end{thm}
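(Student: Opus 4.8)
The plan is to estimate the one--dimensional integral \eqref{DunklNewtonR1} directly. By \eqref{NormId} the denominator
\[ D_{x,y}(t):=|x|^2+|y|^2-2(tx_1y_1+x_2y_2+\dots+x_dy_d) \]
is an affine, nonnegative function of $t\in[-1,1]$ with $D_{x,y}(1)=|x-y|^2$ and $D_{x,y}(-1)=|x-\sigma y|^2$. Writing $m:=|x-y|\wedge|x-\sigma y|$ and $M:=\Phi(x,y)$, its range on $[-1,1]$ is therefore exactly $[m^2,M^2]$, with $M^2-m^2=4|x_1y_1|$, and its minimum is attained at $t=1$ when $x_1y_1\ge0$ and at $t=-1$ when $x_1y_1<0$. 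Near the minimizing endpoint the weight $(1-t)^{k-1}(1+t)^k$ is comparable to $(1-t)^{k-1}$ in the first case and to $(1+t)^k$ in the second, and it is precisely this mismatch between the exponents $k-1$ and $k$ that gives $|x-y|$ and $|x-\sigma y|$ their asymmetric roles in the statement (it reflects that $N_k(\cdot,y)$ has a diagonal singularity at $y$ for every $d\ge2$, but only a secondary one at $\sigma y$, present for $d\ge4$ and logarithmic when $d=4$). I would treat the two cases $x_1y_1>0$ and $x_1y_1<0$ in parallel; the degenerate configurations $x_1y_1=0$ and $x\in W.y$ are checked directly.

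In each case I split the integral \eqref{DunklNewtonR1} at $t=0$. On the half away from the minimizing endpoint one has $D_{x,y}(t)\asymp M^2$ and the weight is integrable there, so that half contributes $\asymp M^{-(2k+d-2)}=\Phi(x,y)^{-2k}\,\Phi(x,y)^{-(d-2)}$. On the half containing the minimizing endpoint, substituting $s=1-t$ (resp.\ $s=1+t$) and using $D_{x,y}\asymp m^2+|x_1y_1|\,s$ there, this half is comparable to
\[ \int_0^1\frac{s^{\ell}\,ds}{(m^2+|x_1y_1|\,s)^{k+d/2-1}},\qquad \ell=\begin{cases}k-1,&x_1y_1>0,\\ k,&x_1y_1<0.\end{cases} \]
If $|x_1y_1|\le m^2$ --- equivalently $|x-y|\asymp|x-\sigma y|\asymp\Phi(x,y)$ --- this integral is $\asymp m^{-(2k+d-2)}\asymp\Phi(x,y)^{-(2k+d-2)}$, and $|x_1y_1|\le m^2$ also forces the logarithmic factors in \eqref{est:NewtonD2} and \eqref{est:NewtonD4} to equal $1$, so all four claimed bounds reduce to $\Phi(x,y)^{-(2k+d-2)}$ and there is nothing more to do.

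It remains to treat the regime $|x_1y_1|>m^2$, in which $M^2\asymp|x_1y_1|$ and hence $\Phi(x,y)^{-2k}\asymp|x_1y_1|^{-k}$. Rescaling $s=\frac{m^2}{|x_1y_1|}\,r$ turns the above half-integral into a quantity comparable to
\[ m^{2\ell-2k-d+4}\,|x_1y_1|^{-(\ell+1)}\int_0^{L}\frac{r^{\ell}\,dr}{(1+r)^{k+d/2-1}},\qquad L:=\frac{|x_1y_1|}{m^2}>1, \]
and the heart of the matter is the elementary asymptotics of $\int_0^{L}r^{\ell}(1+r)^{-(k+d/2-1)}\,dr$: it stays bounded above and below by positive constants when $\ell<k+d/2-2$, grows like $\log L$ when $\ell=k+d/2-2$, and like $L^{\,\ell-k-d/2+2}$ when $\ell>k+d/2-2$. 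For $x_1y_1>0$ one has $\ell=k-1$, which separates $d\ge3$ from the logarithmic case $d=2$ (where $L\asymp|x_1y_1|/|x-y|^2$); for $x_1y_1<0$ one has $\ell=k$, separating $d\ge5$, the logarithmic case $d=4$ (where $L\asymp|x_1y_1|/|x-\sigma y|^2$), $d=3$, and $d=2$.

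Substituting back, using $m=|x-y|\wedge|x-\sigma y|$, $M^2\asymp|x_1y_1|$, and checking in each sub-case that the $\asymp\Phi(x,y)^{-(2k+d-2)}$ contribution from the other half is dominated by or comparable to the expression just obtained, one recovers exactly \eqref{est:NewtonD2}--\eqref{est:NewtonD5}. The main obstacle is purely organizational: keeping the eight sub-cases (four dimensions, two signs of $x_1y_1$) straight, and in particular verifying that the logarithmic factor is genuinely active precisely in dimension $d=2$ when $x_1y_1>0$ and in dimension $d=4$ when $x_1y_1<0$, being equal to $1$ otherwise. All $\asymp$-constants depend only on $k$ and $d$, since the only inputs are the explicit kernel \eqref{DunklNewtonR1} and the positive constant $\widetilde C_k$.
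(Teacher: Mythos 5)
Your proposal is correct and takes essentially the same approach as the paper's proof via Lemmas \ref{lem:DunklNewtonEst1} and \ref{lem:DunklNewtonEst2}: split the one-dimensional integral \eqref{DunklNewtonR1} at its midpoint, bound the half away from the singular endpoint crudely by $\Phi(x,y)^{-(2k+d-2)}$, and rescale the near half to a scale-free integral estimated by an exponent dichotomy. Your parametrization by $\ell\in\{k-1,k\}$ and the single threshold $\ell$ versus $k+d/2-2$ is a cleaner way to organize the eight sub-cases the paper works out one at a time, and it makes transparent why $d=2$ (for $x_1y_1>0$) and $d=4$ (for $x_1y_1<0$) are precisely the logarithmic dimensions.
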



\noindent Theorem \ref{thm:NewtonEst} is a direct consequence of Lemma \ref{lem:DunklNewtonEst1} and 
Lemma \ref{lem:DunklNewtonEst2} below.


\begin{lem}\label{lem:DunklNewtonEst1}
The two-sided bound of $N_k(x,y)$ on $\left\{(x,y)\in\RR^d\times\RR^d:x_1y_1\geq0\right\}$ is as follows.
\begin{enumerate}
\item[1.] If $d=2$, then
\begin{equation}\label{DunklNewtonD2right}
N_k(x,y)\asymp\frac{1}{|x-\sigma y|^{2k}}\left[1\vee\log\left(\frac{x_1y_1}{|x-y|^2}\right)\right].
\end{equation}
\item[2.] If $d\geq3$, then
\begin{equation}\label{DunklNewtonD3right}
N_k(x,y)\asymp \frac{1}{|x-\sigma y|^{2k}|x-y|^{d-2}}.
\end{equation}
\end{enumerate}
\end{lem}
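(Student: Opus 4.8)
The plan is to estimate the integral \eqref{DunklNewtonR1} directly, using the two algebraic identities in \eqref{NormId}. On the region $x_1y_1\geq 0$ we have $2x_1y_1(1-t)\geq 0$ for $t\in[-1,1]$, so the first identity gives
$$
|x|^2+|y|^2-2(tx_1y_1+x_2y_2+\cdots+x_dy_d)=|x-y|^2+2x_1y_1(1-t)\geq |x-y|^2,
$$
while the second identity gives the companion lower bound $|x-\sigma y|^2-2x_1y_1(1+t)$, which at $t=-1$ equals $|x-\sigma y|^2$; moreover on $x_1y_1\geq0$ one has $|x-\sigma y|\geq|x-y|$, so $\Phi(x,y)=|x-\sigma y|$ here. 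The factor $(1+t)^k$ is bounded above and below (up to constants depending on $k$) on, say, $t\in[-1/2,1]$, and the integrand over $t\in[-1,-1/2]$ is comparable to $|x-\sigma y|^{-2(k+d/2-1)}$, which after integration contributes a term of order $|x-\sigma y|^{-(2k+d-2)}$; this is dominated by the bound claimed, so the main contribution comes from $t$ near $1$. I would therefore substitute $s=1-t\in[0,2]$ (so $(1-t)^{k-1}=s^{k-1}$, $(1+t)^k=(2-s)^k\asymp 1$ for $s\in[0,3/2]$) and reduce to estimating
$$
\int_0^{3/2}\frac{s^{k-1}\,ds}{\bigl(|x-y|^2+2x_1y_1 s\bigr)^{k+d/2-1}},
$$
up to the harmless tail already discussed.

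The key step is then a scaling/splitting analysis of this one–variable integral with parameters $A:=|x-y|^2$ and $B:=x_1y_1\geq0$. I would split the range of $s$ at the crossover point $s_0\asymp A/B$ (when $B>0$): for $s\lesssim A/B$ the denominator is $\asymp A^{k+d/2-1}$ and the numerator integrates to $s_0^{k}\asymp (A/B)^{k}$, giving a contribution $\asymp A^{k+d/2-1}{}^{-1}(A/B)^k = A^{d/2-1}B^{-k}$ — wait, more carefully $\asymp (A/B)^k/A^{k+d/2-1}=A^{1-d/2}B^{-k}$; for $s\gtrsim A/B$ the denominator is $\asymp (Bs)^{k+d/2-1}$ and the remaining integral $\int_{s_0}^{3/2}s^{k-1-(k+d/2-1)}\,ds=\int_{s_0}^{3/2}s^{-d/2}\,ds$ behaves differently in the three regimes $d=2$ (logarithm), $d=3$ (the integral is $\asymp s_0^{1-d/2}=s_0^{-1/2}$, finite, dominated by the lower end), and $d\geq 4$ (converges at the top, dominated by $s_0^{1-d/2}$). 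Tracking constants, for $d\geq 3$ both pieces combine to $\asymp B^{-k}\,A^{-(d-2)/2}=(x_1y_1)^{-k}|x-y|^{-(d-2)}$; since $x_1y_1\asymp|x-\sigma y|^2$ when... — actually one must be careful: $|x-\sigma y|^2=|x-y|^2+4x_1y_1$, so $|x-\sigma y|^2\asymp |x-y|^2\vee x_1y_1$, and on the subregion where $x_1y_1\lesssim|x-y|^2$ the whole integral is instead $\asymp A^{1-d/2}/B^{?}$... I would handle the degenerate case $x_1y_1\lesssim|x-y|^2$ separately, where $2x_1y_1 s\lesssim A$ throughout $s\in[0,3/2]$, the denominator is simply $\asymp A^{k+d/2-1}$, the integral is $\asymp A^{-(k+d/2-1)}=|x-y|^{2-2k-d}$, and $|x-\sigma y|\asymp|x-y|$ so this matches $|x-\sigma y|^{-2k}|x-y|^{-(d-2)}$ (and in $d=2$ the bracket is $\asymp 1$). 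In the complementary case $x_1y_1\gtrsim|x-y|^2$ one has $|x-\sigma y|^2\asymp x_1y_1$, the split above applies, and the bracketed logarithm in $d=2$ is exactly $\log(s_0^{-1}\cdot\text{const})\asymp\log(x_1y_1/|x-y|^2)\vee 1$.

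Assembling: in $d=2$ the $s\gtrsim s_0$ piece gives $B^{-k}\int_{s_0}^{3/2}s^{-1}ds\asymp (x_1y_1)^{-k}\log(C/s_0)$ and the $s\lesssim s_0$ piece gives the subordinate term $A^{1-d/2}B^{-k}|_{d=2}=B^{-k}$, hence $N_k\asymp |x-\sigma y|^{-2k}[1\vee\log(x_1y_1/|x-y|^2)]$, which is \eqref{DunklNewtonD2right}; for $d\geq 3$ one gets $N_k\asymp (x_1y_1\vee|x-y|^2)^{-k}|x-y|^{-(d-2)}\asymp |x-\sigma y|^{-2k}|x-y|^{-(d-2)}$, which is \eqref{DunklNewtonD3right}. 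The main obstacle is the bookkeeping of which endpoint of the $s$–integral dominates in each dimension and the careful verification that $|x-\sigma y|^2\asymp |x-y|^2\vee x_1y_1$ on the region $x_1y_1\geq0$, so that the two case–dependent answers unify into the stated closed forms; everything else is a routine scaling estimate of an elementary integral, and the constant $\widetilde C_k$ and the $(2-s)^k$ factor only affect implicit constants depending on $k,d$.
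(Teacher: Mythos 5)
Your proposal is correct and follows essentially the same route as the paper: both substitute $s=1-t$, set $\zeta=|x-y|^2$ and $\eta=x_1y_1$, observe $|x-\sigma y|^2\asymp\zeta+\eta$ on the region $x_1y_1\ge0$, and estimate the resulting elementary integral by splitting at the crossover $s_0\asymp\zeta/\eta$ (equivalently distinguishing $\eta\le\zeta$ from $\eta>\zeta$), with the dimension dichotomy $d=2$ versus $d\ge3$ governing whether a logarithm or a power law appears. The paper's intermediate change of variables $s=u\zeta$, $w=1/u$ is merely a reparametrization of the same split, so the two arguments are equivalent up to bookkeeping.
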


\begin{proof}
Denote $\zeta=|x-y|^{2}$ and $\eta=x_1y_1$. Since $x_1y_1\geq0$ we have $\zeta+\eta\asymp |x-\sigma y|^{2}$. By \eqref{DunklNewtonR1} and \eqref{NormId} we have
\begin{equation}\label{NewtonPotentialRank1right}
N_k(x,y)=C_k\int_{-1}^1 \frac{(1-t)^{k-1}(1+t)^kdt}{(\zeta+2\eta(1-t))^{k+d/2-1}}
=C_k\int_{0}^2 \frac{s^{k-1}(2-s)^k ds}{(\zeta+2\eta s)^{k+d/2-1}}.
\end{equation}
We write $N_k(x,y)=C_k(I_1+I_2)$, where
$$
I_1=\int_{0}^1 \frac{s^{k-1}(2-s)^k
ds}{(\zeta+2\eta s)^{k+d/2-1}}\asymp\int_{0}^1
\frac{s^{k-1}ds}{(\zeta+\eta s)^{k+d/2-1}},
$$
and
\begin{align*}
I_2=\int_{1}^2 \frac{s^{k-1}(2-s)^k ds}{(\zeta+2\eta s)^{k+d/2-1}}
\asymp \int_{1}^2 \frac{(2-s)^k ds}{(\zeta+\eta s)^{k+d/2-1}}\asymp(\zeta+\eta)^{1-k-d/2}.
\end{align*}
For $\eta=0$ the estimates of the lemma are obvious, so assume $\eta>0$. Using the change of variables $s=u\zeta$ we get
\begin{align*}
I_1\asymp &\int_{0}^{1/\zeta}\frac{\zeta^k u^{k-1} du}{(\zeta+\zeta\eta u)^{k+d/2-1}}=\zeta^{1-d/2}\int_{0}^{1/\zeta}\frac{u^{k-1}du}{(1+\eta u)^{k+d/2-1}}\\
=& \zeta^{1-d/2}\int_{0}^{1/\zeta}\frac{du}{u^{d/2}(1/u+\eta)^{k+d/2-1}}
= \zeta^{1-d/2}\int_{\zeta}^{\infty}\frac{w^{d/2-2}dw}{(w+\eta)^{k+d/2-1}}.
\end{align*}
Let $d=2$ and assume first $\eta\leq\zeta$. Then
$$
I_1\asymp\int_{\zeta}^{\infty}\frac{dw}{w(w\vee \eta)^{k}}=\int_{\zeta}^{\infty}w^{-k-1}dw
=\zeta^{-k}/k\asymp (\zeta+\eta)^{-k},
$$
and note that the same two-sided estimate holds also for $I_2$. Assume $\eta>\zeta$. We have
\begin{align*}
I_1\asymp \int_{\zeta}^{\infty}\frac{dw}{w(w\vee \eta)^{k}}=
\int_{\zeta}^{\eta}\frac{dw}{w\eta^{k}}+\int_{\eta}^{\infty}\frac{dw}{w^{k+1}}
=& \eta^{-k}\log\left(\eta/\zeta\right)+\eta^{-k}/k\\
\asymp &(\zeta+\eta)^{-k}\left[1\vee\log\left(\eta/\zeta\right)\right].
\end{align*}
It is clear that the estimate above holds also for $I_1+I_2$, and combining it with the previous case we get \eqref{DunklNewtonD2right}.

Assume $d\geq3$. For $\eta\leq\zeta$ we get
\begin{align*}
I_1\asymp \zeta^{1-d/2}\int_{\zeta}^{\infty}\frac{w^{d/2-2}dw}{(w\vee\eta)^{k+d/2-1}}
=&\zeta^{1-d/2}\int_{\zeta}^{\infty}w^{-k-1}dw\\
=& \zeta^{1-k-d/2}/k\asymp(\zeta+\eta)^{-k}\zeta^{1-d/2}.
\end{align*}
When $\eta>\zeta$, then a similar reasoning as before gives
\begin{align*}
I_1\asymp &\zeta^{1-d/2}\left(\int_{\zeta}^{\eta}\frac{w^{d/2-2}dw}{\eta^{k+d/2-1}}+\int_{\eta}^{\infty}\frac{dw}{w^{k+1}}\right)\\
=&\zeta^{1-d/2}\left[\frac{2}{(d-2)\eta^{k+d/2-1}}\left(\eta^{d/2-1}-\zeta^{d/2-1}\right)+\frac{1}{k\eta^{k}}\right]\\
=&\frac{1}{\eta^{k+d/2-1}\zeta^{d/2-1}}\left[\frac{2}{d-2}\left(\eta^{d/2-1}-\zeta^{d/2-1}\right)+\frac{\eta^{d/2-1}}{k}\right].
\end{align*}
Since $0<\eta^{d/2-1}-\zeta^{d/2-1}\leq \eta^{d/2-1}$, we obtain
$$
I_1\asymp\eta^{-k}\zeta^{1-d/2}\asymp(\zeta+\eta)^{-k}\zeta^{1-d/2}.
$$
Finally,
$$
I_2\asymp (\zeta+\eta)^{1-k-d/2}\leq(\zeta+\eta)^{-k}\zeta^{1-d/2},
$$
and hence $I_1+I_2\asymp I_1$. This proves \eqref{DunklNewtonD3right}.
\end{proof}

\vspace{5mm}

\begin{lem}\label{lem:DunklNewtonEst2}
The two-sided bound of $N_k(x,y)$ on $\left\{(x,y)\in\RR^d\times\RR^d:x_1y_1<0\right\}$ is as follows.
\begin{enumerate}
\item[1.] If $2\leq d\leq3$, then
\begin{equation}\label{DunklNewtonD23left}
N_k(x,y)\asymp \frac{1}{|x-y|^{2k+d-2}}.
\end{equation}
\item[2.] If $d=4$, then
\begin{equation}\label{DunklNewtonD4left}
N_k(x,y)\asymp \frac{1}{|x-y|^{2k+2}}\left[1\vee\log\left(\frac{|x_1y_1|}{|x-\sigma y|^2}\right)\right].
\end{equation}
\item[3.] If $d\geq5$, then 
\begin{equation}\label{DunklNewtonD5left}
N_k(x,y)\asymp \frac{1}{|x-y|^{2k+2}|x-\sigma y|^{d-4}}.
\end{equation}
\end{enumerate}
\end{lem}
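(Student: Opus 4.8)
The plan is to follow the scheme of the proof of Lemma~\ref{lem:DunklNewtonEst1}, but now using the \emph{second} identity in \eqref{NormId}, since on the region $x_1y_1<0$ it is $|x-\sigma y|^2$ that plays the role of the small quantity. Put $\zeta:=|x-\sigma y|^2$ and $\eta:=-x_1y_1=|x_1y_1|$; note that $\eta>0$ throughout this region. By \eqref{DunklNewtonR1}, \eqref{NormId} and the substitution $s=1+t$,
\[
N_k(x,y)=C_k\int_0^2\frac{(2-s)^{k-1}s^k\,ds}{(\zeta+2\eta s)^{k+d/2-1}},
\]
and since $|x-y|^2=|x-\sigma y|^2+4\eta=\zeta+4\eta$ we have $\zeta+\eta\asymp|x-y|^2$. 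Writing $N_k=C_k(I_1+I_2)$ with $I_1=\int_0^1$ and $I_2=\int_1^2$, and using that $(2-s)^{k-1}\asymp1$ and $\zeta+2\eta s\asymp\zeta+\eta s$ on $(0,1)$, while $s^k\asymp1$ and $\zeta+2\eta s\asymp\zeta+\eta$ on $(1,2)$, one obtains
\[
I_1\asymp\int_0^1\frac{s^k\,ds}{(\zeta+\eta s)^{k+d/2-1}},\qquad I_2\asymp(\zeta+\eta)^{1-k-d/2}\asymp|x-y|^{2-2k-d}.
\]

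To estimate $I_1$ I would carry out, exactly as in Lemma~\ref{lem:DunklNewtonEst1}, the substitution $s=u\zeta$ followed by $w=1/u$, which yields
\[
I_1\asymp\zeta^{2-d/2}\int_\zeta^\infty\frac{w^{d/2-3}\,dw}{(w+\eta)^{k+d/2-1}}.
\]
Then split into the cases $\eta\le2\zeta$ and $\eta>2\zeta$. If $\eta\le2\zeta$, then $\zeta\asymp\zeta+\eta\asymp|x-y|^2$ and $|x-\sigma y|\asymp|x-y|$, and since $w\ge\zeta$ together with $\eta\le2\zeta$ give $w+\eta\asymp w$, one finds $I_1\asymp\zeta^{1-k-d/2}\asymp I_2$, hence $N_k\asymp|x-y|^{2-2k-d}$; this matches the statement in all cases, since in this regime the logarithmic factor when $d=4$ is $\asymp1$ and $|x-\sigma y|^{d-4}\asymp|x-y|^{d-4}$ when $d\ge5$. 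If $\eta>2\zeta$, then $\zeta+\eta\asymp\eta\asymp|x-y|^2$, and writing $\int_\zeta^\infty=\int_\zeta^\eta+\int_\eta^\infty$ (with $w+\eta\asymp\eta$ on the first piece and $w+\eta\asymp w$ on the second) reduces everything to evaluating $\int_\zeta^\eta w^{d/2-3}\,dw$. For $2\le d\le3$ this exponent is below $-1$, the integral is bounded by a constant times $\zeta^{d/2-2}$, and one gets that $I_1$ is bounded by a constant times $\eta^{1-k-d/2}\asymp I_2$, so $N_k\asymp I_2\asymp|x-y|^{2-2k-d}$. For $d=4$ the integral equals $\log(\eta/\zeta)$, so $I_1\asymp\eta^{-k-1}\bigl[1\vee\log(\eta/\zeta)\bigr]$, which is at least a constant multiple of $I_2$; hence $N_k\asymp I_1$. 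For $d\ge5$ the exponent exceeds $-1$ and, using $\eta>2\zeta$, the integral is $\asymp\eta^{d/2-2}$, so that $I_1\asymp\zeta^{2-d/2}\eta^{-k-1}$, which dominates $I_2$ because $I_1/I_2\asymp(\eta/\zeta)^{d/2-2}\ge1$; hence $N_k\asymp\zeta^{2-d/2}\eta^{-k-1}$. Substituting back $\eta\asymp|x-y|^2$, $\zeta=|x-\sigma y|^2$ and $\eta=|x_1y_1|$, and combining with the case $\eta\le2\zeta$, these estimates become \eqref{DunklNewtonD23left}, \eqref{DunklNewtonD4left} and \eqref{DunklNewtonD5left}, respectively.

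The only genuinely delicate point is the logarithmic regime $d=4$: the argument of the logarithm must be $|x_1y_1|/|x-\sigma y|^2=\eta/\zeta$ and not, say, $\eta/|x-y|^2$, and the two differ drastically precisely when the logarithm is large, i.e.\ when $\eta$ is much larger than $\zeta$, since then $|x-y|^2\asymp\eta$. The rest is bookkeeping; the one spot that needs attention is the transition $\eta\asymp\zeta$, which is why I would split at $\eta=2\zeta$ rather than at $\eta=\zeta$, so that in the borderline case $\zeta$, $\eta$, $\zeta+\eta$ and $|x-y|^2$ are all comparable and every formula collapses to $|x-y|^{2-2k-d}$.
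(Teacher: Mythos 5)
Your proof is correct and follows essentially the same route as the paper's: the identity $|x-y|^2+|y|^2-2\langle x,z\rangle=|x-\sigma y|^2-2x_1y_1(1+t)$, the split $N_k=C_k(I_1+I_2)$, the substitution $s=u\zeta$ followed by $w=1/u$ to reach $I_1\asymp\zeta^{2-d/2}\int_\zeta^\infty w^{d/2-3}(w+\eta)^{1-k-d/2}\,dw$, and the case analysis on $d$. The only (harmless, and arguably slightly tidier) deviation is splitting at $\eta=2\zeta$ rather than at $\eta=\zeta$, which as you note makes the transition regime degenerate in a uniform way; the paper instead splits at $\eta=\zeta$ and handles the borderline by absorbing it into the computations.
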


\begin{proof}
Denote $\zeta=|x-\sigma y|^{2}$ and $\eta=|x_1y_1|$. Since $x_1y_1<0$ we have $\zeta+\eta\asymp |x-y|^{2}$. By \eqref{DunklNewtonR1} and \eqref{NormId} we have
\begin{align}\label{NewtonPotentialRank1Left}
N_k(x,y)=&C_k\int_{-1}^1 \frac{(1-t)^{k-1}(1+t)^kdt}{(\zeta+2\eta(1+t))^{k+d/2-1}}=C_k\int_{0}^2 \frac{(2-s)^{k-1}s^k ds}{(\zeta+2\eta s)^{k+d/2-1}}.
\end{align}
We write $N_k(x,y)=C_k(I_1+I_2)$, where
$$
I_1=\int_{0}^1 \frac{(2-s)^{k-1}s^k
ds}{(\zeta+2\eta s)^{k+d/2-1}}\asymp\int_{0}^1
\frac{s^{k}ds}{(\zeta+\eta s)^{k+d/2-1}},
$$
and
$$
I_2=\int_{1}^2 \frac{(2-s)^{k-1}s^kds}{(\zeta+2\eta s)^{k+d/2-1}}
\asymp \int_{1}^2 \frac{(2-s)^{k-1} ds}{(\zeta+\eta s)^{k+d/2-1}}
\asymp (\zeta+\eta)^{1-k-d/2}.
$$
As in the proof of Lemma \ref{lem:DunklNewtonEst1}, we apply the change of variables $s=u\zeta$ and get
\begin{align*}
I_1\asymp\zeta^{2-d/2}\int_{0}^{1/\zeta}\frac{u^{k}du}{(1+\eta u)^{k+d/2-1}}
=& \zeta^{2-d/2}\int_{0}^{1/\zeta}\frac{du}{u^{d/2-1}(1/u+\eta)^{k+d/2-1}}\\
=& \zeta^{2-d/2}\int_{\zeta}^{\infty}\frac{w^{d/2-3}dw}{(w+\eta)^{k+d/2-1}}.
\end{align*}
Assume $\eta\leq\zeta$. Then
\begin{equation}\label{EstI1}
I_1\asymp \zeta^{2-d/2}\int_{\zeta}^{\infty}w^{-k-2}dw
\asymp \zeta^{1-k-d/2}
\asymp (\zeta+\eta)^{1-k-d/2}.
\end{equation}
When $\eta>\zeta$ we have
\begin{align*}
I_1\asymp &\zeta^{2-d/2}\int_{\zeta}^{\infty}\frac{w^{d/2-3}dw}{(w\vee\eta)^{k+d/2-1}}
=\zeta^{2-d/2}\left(\int_{\zeta}^{\eta}\frac{w^{d/2-3}dw}{\eta^{k+d/2-1}}+\int_{\eta}^{\infty}\frac{dw}{w^{k+2}} \right)\\
=&\zeta^{2-d/2}\left(\frac{1}{\eta^{k+d/2-1}}\int_{\zeta}^{\eta}w^{d/2-3}dw+\frac{1}{(k+1)\eta^{k+1}}\right).
\end{align*}
Assume first $2\leq d\leq3$. We obtain
\begin{align*}
I_1\asymp &\zeta^{2-d/2}\left[\frac{2}{(4-d)\eta^{k+d/2-1}}\left(\frac{1}{\zeta^{2-d/2}}-\frac{1}{\eta^{2-d/2}}\right)
+\frac{1}{(k+1)\eta^{k+1}}\right]\\
=&\frac{2}{(4-d)\eta^{k+d/2-1}}\left[1-\left(1-
\frac{(4-d)}{2(k+1)}\right)\frac{\zeta^{2-d/2}}{\eta^{2-d/2}}\right].
\end{align*}
Since $0<(4-d)/(2k+2)<1$ and $\zeta^{2-d/2}\leq \eta^{2-d/2}$, we get
$$
I_1\asymp \eta^{1-k-d/2}\asymp (\zeta+\eta)^{1-k-d/2}.
$$
Note that the same estimate holds for $I_1$ when $\eta\leq\zeta$ and for $I_2$ for all $x,y$ with $x_1y_1<0$. This gives \eqref{DunklNewtonD23left}. 

For $d=4$ and $\eta>\zeta$ we have
$$
I_1\asymp \eta^{-k-1}\log\left(\eta/\zeta\right)
+\eta^{-k-1}/(k+1)
\asymp (\zeta+\eta)^{-k-1}\left[1\vee\log\left(\eta/\zeta\right)\right].
$$
The last two-sided estimate remains valid for $I_1$ also when $\eta\leq\zeta$ and the upper bound dominates $I_2$. This proves \eqref{DunklNewtonD4left}. 

Finally, assume $d\geq5$ and $\eta>\zeta$. Then
\begin{align*}
I_1\asymp &\zeta^{2-d/2}\left[\frac{2\left(\eta^{d/2-2}-\zeta^{d/2-2}\right)}{(d-4)\eta^{k+d/2-1}}+\frac{1}{(k+1)\eta^{k+1}}\right]\\
\asymp &\eta^{-k-1}\zeta^{2-d/2}\left(\frac{\eta^{d/2-2}-\zeta^{d/2-2}}{\eta^{d/2-2}}+1\right)
\asymp (\zeta+\eta)^{-k-1}\zeta^{2-d/2}.
\end{align*}
For $\eta\leq\zeta$ we have $\zeta+\eta\asymp\zeta$, and by \eqref{EstI1} we get
\begin{align*}
I_1\asymp (\zeta+\eta)^{1-k-d/2}\asymp (\zeta+\eta)^{-k-1}\zeta^{2-d/2}.
\end{align*}
Since $d\geq5$, the upper bound of the last estimate also dominates $I_2$. The proof of \eqref{DunklNewtonD5left} is complete.
\end{proof}

\vspace{5mm}

\noindent We will next give sharp two-sided estimates of $G_k(x,y)$ in the rank one case.

\begin{thm}\label{thm:GreenEstR1}
Let $\Phi(x,y):=|x-y|\vee|x-\sigma y|$. The two-sided bound of $G_k(x,y)$ on $\BB\times\BB$ is the following.
\begin{enumerate}
\item[1.] If $d=2$, then
\begin{align}\label{est:GreenD2}
G_k(x,y)\asymp\frac{1}{\Phi(x,y)^{2k}}&\left(1\wedge\frac{\delta(x)\delta(y)}{|x-y|^{2}}\right)\left[1\vee\log\left(\frac{|x_1y_1|\wedge\delta(x)\delta(y)}{|x-y|^2}\right)\right]\nonumber\\
&\times
\left[1\vee\log\left(\frac{|x_1y_1|}{\delta(x)\delta(y)\vee|x-\sigma y|^2}\right)\right].
\end{align}
\item[2.] If $d=3$, then
\begin{align}\label{est:GreenD3}
G_k(x,y)\asymp\frac{1}{\Phi(x,y)^{2k}|x-y|}&\left(1\wedge\frac{\sqrt{\delta(x)\delta(y)}}{|x-y|}\right)\\
&\times\left(1\wedge\frac{\sqrt{\delta(x)\delta(y)}}{|x-y|\wedge|x-\sigma y|}\right).\nonumber
\end{align}
\item[3.] If $d=4$, then
\begin{align}\label{est:GreenD4}
G_k(x,y)\asymp\frac{1}{\Phi(x,y)^{2k}|x-y|^{2}}&\left(1\wedge\frac{\delta(x)\delta(y)}{|x-y|^{2}\wedge|x-\sigma y|^2}\right)\\
\times &
\left[1\vee\log\left(\frac{|x_1y_1|\wedge\delta(x)\delta(y)}{|x-\sigma y|^2}\right)\right].\nonumber
\end{align}
\item[4.] If $d\geq5$, then
\begin{align}\label{est:GreenD5}
G_k(x,y)&\asymp\frac{1}{\Phi(x,y)^{2k}(|x-y|\wedge|x-\sigma y|)^{d-4}|x-y|^{2}}\\
&\times\left(1\wedge\frac{\delta(x)\delta(y)}{|x-y|^{2}\wedge|x-\sigma y|^2}\right).\nonumber
\end{align}
\end{enumerate}
\end{thm}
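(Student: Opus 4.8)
The starting point is the second two-sided bound in Theorem~\ref{thm:GreenEst}: writing $\Lambda:=\delta(x)\delta(y)$ and $Q_z:=|x|^2+|y|^2-2\langle x,z\rangle$, it reads $G_k(x,y)\asymp\Lambda\int_{C(y)}(\Lambda+Q_z)^{-1}Q_z^{\,1-k-d/2}\,d\mu_y(z)$. In rank one the representing measure $\mu_y$ is, by \eqref{IntertwinerRank1}, the image of $c_k(1-t)^{k-1}(1+t)^k\,dt$ on $[-1,1]$ under $t\mapsto(ty_1,y_2,\dots,y_d)$, and by \eqref{NormId} one has $Q_z=|x-y|^2+2x_1y_1(1-t)=|x-\sigma y|^2-2x_1y_1(1+t)$; thus, exactly as \eqref{DunklNewton} turned into \eqref{DunklNewtonR1}, the bound becomes a single one-dimensional integral. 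As in the proofs of Lemma~\ref{lem:DunklNewtonEst1} and Lemma~\ref{lem:DunklNewtonEst2} I would treat the regions $x_1y_1\ge0$ and $x_1y_1<0$ separately. Putting $(\zeta,\eta)=(|x-y|^2,x_1y_1)$ in the first case (so that $\zeta+\eta\asymp|x-\sigma y|^2$) and $(\zeta,\eta)=(|x-\sigma y|^2,|x_1y_1|)$ in the second (so that $\zeta+\eta\asymp|x-y|^2$), substituting $s=1-t$ resp.\ $s=1+t$, and using that $(2-s)^c\asymp1$ on $[0,1]$ for any exponent $c$, the task reduces to estimating $\Lambda\int_0^2\frac{s^{a-1}\,ds}{(\Lambda+\zeta+2\eta s)(\zeta+2\eta s)^{k+d/2-1}}$, with $a=k$ in the first region and $a=k+1$ in the second, split at $s=1$; here $\zeta\vee\eta\asymp\Phi(x,y)^2$ and $\zeta+2\eta s$ always lies between $|x-y|^2$ and $|x-\sigma y|^2$. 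It is convenient to note $\frac{\Lambda}{\Lambda+Q}\asymp1\wedge\frac{\Lambda}{Q}$ with absolute constants; this already explains the origin of the factors $1\wedge(\delta(x)\delta(y)/\cdots)$ in the statement, and it shows that for $\Lambda$ large the integrand is $\asymp Q_z^{1-k-d/2}$ so that $G_k\asymp N_k$, whereas for $\Lambda$ small it is $\asymp\Lambda\,Q_z^{-k-d/2}$, a Newton-type kernel with $d$ raised by $2$.

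Next, following the template of those two lemmas, the outer piece over $[1,2]$ is elementary: there $s^{a-1}\asymp1$ and $\zeta+2\eta s\asymp\zeta+\eta$, giving a contribution $\asymp\Lambda\,(\Lambda+\zeta+\eta)^{-1}(\zeta+\eta)^{1-k-d/2}$. For the inner piece over $[0,1]$ I would run the same changes of variables $s=u\zeta$ and then $u=1/w$ as in those proofs; after them the inner integral becomes a constant multiple of $\Lambda\,\zeta^{\kappa}\int_\zeta^\infty\frac{w^{\lambda}\,dw}{\bigl((\Lambda+\zeta)w+\zeta\eta\bigr)(w+\eta)^{k+d/2-1}}$, with $(\kappa,\lambda)=(1-d/2,\,d/2-1)$ in the first region and $(\kappa,\lambda)=(2-d/2,\,d/2-2)$ in the second. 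On this integral one replaces $w+\eta$ by $w\vee\eta$ and $(\Lambda+\zeta)w+\zeta\eta$ by a monomial in $w$ up to constants on each of the at most three subintervals of $(\zeta,\infty)$ cut out by the relative positions of $\eta$, $\Lambda$ and $\zeta\eta/\Lambda$, and then integrates $w^{\text{power}}$ on each; the outcome is a power of the endpoints, degenerating to a logarithm precisely when the relevant exponent vanishes. Collecting all contributions together with the outer piece, re-expressing everything through $\zeta+\eta$, $\zeta\vee\eta\asymp\Phi(x,y)^2$ and $\Lambda$, and finally returning to $|x-y|$, $|x-\sigma y|$, $|x_1y_1|$ and $\delta(x)\delta(y)$ produces formulas \eqref{est:GreenD2}--\eqref{est:GreenD5}. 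The two sign regions glue to the stated symmetric expressions because passing from $x_1y_1\ge0$ to $x_1y_1<0$ merely interchanges the roles of $|x-y|$ and $|x-\sigma y|$, while $\Phi(x,y)$ and $\delta(x)\delta(y)$ are unchanged. Concretely I would package the two sign regions as two lemmas, exact analogues of Lemma~\ref{lem:DunklNewtonEst1} and Lemma~\ref{lem:DunklNewtonEst2}, from which Theorem~\ref{thm:GreenEstR1} follows at once.

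The main obstacle is the bookkeeping of subcases. Compared with the Newton-kernel estimates there is now a third scale $\Lambda$ besides $\zeta$ and $\eta$, so the $[0,1]$-integral must be dissected according to all orderings of $\Lambda,\zeta,\eta$ (and of $\zeta\eta/\Lambda$), roughly doubling the number of cases; moreover this interacts with the sign of the exponents $d/2-2$ (first region) and $d/2-3$ (second region), which is why logarithmic corrections appear in the low dimensions $d=2$ and $d=4$, in fact a \emph{product of two} logarithms for $d=2$ — one already present in $N_k$ and a second one created by the extra factor $(\Lambda+Q)^{-1}$, which raises the power of $Q$ in the integrand — while for $d\ge5$ the answer is the clean multiple $N_k(x,y)\cdot\bigl(1\wedge\delta(x)\delta(y)/(|x-y|^2\wedge|x-\sigma y|^2)\bigr)$. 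Keeping the implied constants uniform in $x,y\in\BB$ across all these subcases, and checking the elementary ball-geometry relations among $\delta(x),\delta(y),|x-y|,|x-\sigma y|,|x_1y_1|$ needed to identify when the factors $1\wedge(\cdots)$ equal $1$ and when the $1\vee\log(\cdots)$ reduce to $1$ (for instance, $|x_1y_1|\le|x-\sigma y|^2$ when $x_1y_1\ge0$, and $\zeta+2\eta s$ is sandwiched between $|x-y|^2$ and $|x-\sigma y|^2$), is where the care is required; the rest is routine, if lengthy, calculation.
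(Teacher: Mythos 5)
Your proposal is correct and follows essentially the same route as the paper: the paper proves the theorem via two lemmas (Lemma~\ref{lem:GreenEst1right} and Lemma~\ref{lem:GreenEst1left}) treating $x_1y_1\geq0$ and $x_1y_1<0$ exactly as you describe, starting from Theorem~\ref{thm:GreenEst}, writing the one-dimensional integral via \eqref{IntertwinerRank1} and \eqref{NormId}, splitting at $s=1$, and using the substitutions $s=u\zeta$ and $u=1/w$. The only minor organizational difference is that the paper dispatches the regime $\delta(x)\delta(y)\leq\zeta$ in one stroke by absorbing the extra factor and invoking the Newton-kernel estimates of Lemma~\ref{lem:DunklNewtonEst1} and Lemma~\ref{lem:DunklNewtonEst2} with $d$ replaced by $d+2$, whereas you propose a unified dissection of $(\zeta,\infty)$ by the relative positions of $\eta$, $\Lambda$ and $\zeta\eta/\Lambda$; both reduce to the same elementary power-and-logarithm integrations.
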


\noindent Theorem \ref{thm:GreenEstR1} is a direct consequence of Lemma \ref{lem:GreenEst1right} and Lemma 
\ref{lem:GreenEst1left} below.

\begin{lem}\label{lem:GreenEst1right}
The two-sided bound of $G_k(x,y)$ on $\left\{(x,y)\in\BB\times\BB:x_1y_1\geq0\right\}$ is the following.
\begin{enumerate}
\item[1.] If $d=2$, then
\begin{align}\label{GreenD2right}
G_k(x,y)\asymp\frac{1}{|x-\sigma y|^{2k}}&\left(1\wedge\frac{\delta(x)\delta(y)}{|x-y|^{2}}\right)\\
&\times\left[1\vee\log\left(\frac{x_1y_1\wedge\delta(x)\delta(y)}{|x-y|^2}\right)\right].\nonumber
\end{align}
\item[2.] If $d\geq3$, then
\begin{equation}\label{GreenD3right}
G_k(x,y)\asymp \frac{1}{|x-\sigma y|^{2k}|x-y|^{d-2}}\left(1\wedge\frac{\delta(x)\delta(y)}{|x-y|^{2}}\right).
\end{equation}
\end{enumerate}
\end{lem}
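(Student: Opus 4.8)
The plan is to reduce the statement, via Theorem~\ref{thm:GreenEst} and the explicit rank-one intertwining operator \eqref{IntertwinerRank1}, to a single elementary one-dimensional integral, and then to repeat — with one extra factor in the integrand — the case analysis from the proof of Lemma~\ref{lem:DunklNewtonEst1}.

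\emph{Step 1: setting up the integral.} I would specialize the second two-sided bound of Theorem~\ref{thm:GreenEst} to the root system $A_1$ in $\RR^d$ (where $\gamma=k$), using \eqref{IntertwinerRank1} together with \eqref{NormId}. Writing $\zeta=|x-y|^2$, $\eta=x_1y_1\ge 0$ (so that $\zeta+\eta\asymp|x-\sigma y|^2$, exactly as in Lemma~\ref{lem:DunklNewtonEst1}), $A=\delta(x)\delta(y)$, and substituting $s=1-t$, this yields on the region $x_1y_1\ge 0$
\[
G_k(x,y)\asymp A\int_0^2\frac{s^{k-1}(2-s)^k\,ds}{(A+\zeta+2\eta s)(\zeta+2\eta s)^{k+d/2-1}}.
\]
This is precisely the integral \eqref{NewtonPotentialRank1right} defining $N_k(x,y)$ on this region, with the extra factor $(A+\zeta+2\eta s)^{-1}$, which is $\asymp A^{-1}$ when $\zeta+2\eta s\lesssim A$ and $\asymp(\zeta+2\eta s)^{-1}$ otherwise. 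Since Theorem~\ref{thm:GreenEst} already incorporates Lemma~\ref{TechLem}, computing this integral up to constants yields both the upper and the lower bound at once.

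\emph{Step 2: splitting and evaluating.} As in the proof of Lemma~\ref{lem:DunklNewtonEst1} I would split the integral at $s=1$ into $I_1+I_2$, using $2-s\asymp 1$ and $2\eta s\asymp\eta s$ on $[0,1]$ and $s\asymp 1$ on $[1,2]$, so that
\[
I_1\asymp A\int_0^1\frac{s^{k-1}\,ds}{(A+\zeta+\eta s)(\zeta+\eta s)^{k+d/2-1}},\qquad I_2\asymp\frac{A}{(A+\zeta+\eta)(\zeta+\eta)^{k+d/2-1}}.
\]
The term $I_2$ is already of the form $(\zeta+\eta)^{1-k-d/2}\bigl(1\wedge A/(\zeta+\eta)\bigr)$ and will turn out to be dominated by $I_1$ for $d\ge 3$, and comparable to it for $d=2$. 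For $I_1$ I would split $[0,1]$ according to the thresholds $s\asymp\zeta/\eta$ (where $\zeta+\eta s$ changes from $\asymp\zeta$ to $\asymp\eta s$) and the value of $s$ where $\zeta+\eta s\asymp A$ (where the extra factor changes behaviour), and estimate on each piece elementary integrals of the type $\int s^{k-1}(\zeta+\eta s)^{-q}\,ds$, using repeatedly $\int_a^b s^{k-1}\,ds\asymp b^k$ for $a\le b/2$ (as $k>0$) and, for $d=2$, $\int dw/w$ over a dyadic range. Organizing the outcome by the orderings of $A,\zeta,\eta$, I expect $I_1\asymp N_k(x,y)\,(1\wedge\delta(x)\delta(y)/|x-y|^2)$ for $d\ge 3$, with $N_k$ as in \eqref{DunklNewtonD3right}; for $d=2$ the logarithm in \eqref{DunklNewtonD2right} gets truncated at $A$, since once $\zeta+\eta s>A$ the extra factor $(\zeta+\eta s)^{-1}$ suppresses its growth, producing $1\vee\log\bigl((x_1y_1\wedge\delta(x)\delta(y))/|x-y|^2\bigr)$. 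Adding $I_2$ back changes nothing.

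\emph{Main obstacle.} The delicate part is the bookkeeping for $d=2$, where the two thresholds $\eta$ and $A$ may be nested in either order and one must check that the logarithmic factor is cut off at exactly $x_1y_1\wedge\delta(x)\delta(y)$. A second point requiring care is the regime $A\gtrsim\zeta+\eta$ (which occurs when $x$ and $y$ are both close to the origin), where $1\wedge A/|x-y|^2=1$: there one must verify that the ``$\asymp A^{-1}$'' branch of the extra factor does not shrink the integral, i.e.\ that $G_k(x,y)\asymp N_k(x,y)$. This follows because then $\zeta+2\eta s\lesssim A$ on the whole range $[0,2]$, so $(A+\zeta+2\eta s)^{-1}\asymp A^{-1}$ and the factor $A$ in front cancels, reducing the integral to \eqref{NewtonPotentialRank1right}. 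Everything else is a routine repetition of the computations in Lemma~\ref{lem:DunklNewtonEst1}.
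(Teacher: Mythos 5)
Your proposal follows essentially the same route as the paper's proof: specialize Theorem~\ref{thm:GreenEst} to rank one via \eqref{IntertwinerRank1} and \eqref{NormId}, substitute $s=1-t$, split at $s=1$ into $I_1+I_2$, and then subdivide $[0,1]$ at the thresholds $s\asymp\zeta/\eta$ and $s\asymp\xi/\eta$, which reproduces the paper's case split on $\xi$ versus $\zeta$ and then $\xi$ versus $\eta$. The only cosmetic difference is that the paper notes that the $\xi\le\zeta$ regime literally reproduces the Newton-kernel integral \eqref{NewtonPotentialRank1right} at dimension $d+2$ and so quotes Lemma~\ref{lem:DunklNewtonEst1} directly, whereas you would redo that estimate inline.
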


\vspace{5mm}

\begin{proof}
Let $\zeta=|x-y|^{2}$, $\eta=x_1y_1$, and $\xi=\delta(x)\delta(y)$.
By Theorem \ref{thm:GreenEst}, \eqref{IntertwinerRank1}, and \eqref{NormId} we have
\begin{equation}\label{Rank1GreenAsymp}
G_k(x,y)\asymp
\int_{-1}^1 \frac{\xi(1-t)^{k-1}(1+t)^kdt}{(\xi+\zeta+\eta(1-t))(\zeta+\eta(1-t))^{k+d/2-1}}.
\end{equation}
Assume first $\xi\leq\zeta$. Then by \eqref{Rank1GreenAsymp},
$$
G_k(x,y)\asymp\int_{-1}^1 \frac{\xi(1-t)^{k-1}(1+t)^kdt}{(\zeta+\eta(1-t))^{k+d/2}},
$$
and observe that the same integral appears in \eqref{NewtonPotentialRank1right} with $d'=d+2$ instead of $d$. Hence, by \eqref{DunklNewtonD3right} we get
\begin{equation}\label{GreenEasyCase}
G_k(x,y)\asymp\frac{\xi}{(\zeta+\eta)^{k}\zeta^{d/2}}.
\end{equation}
Assume $\xi>\zeta$. Using \eqref{Rank1GreenAsymp} and the substitution $s=1-t$ we obtain
$$
G_k(x,y)\asymp
\int_{0}^2 \frac{\xi s^{k-1}(2-s)^kds}{(\xi+\eta s)(\zeta+\eta s)^{k+d/2-1}}=I_1+I_2,
$$
where
\begin{equation}\label{I1estimate}
I_1\asymp\int_{0}^1 \frac{\xi s^{k-1}ds}{(\xi+\eta s)(\zeta+\eta s)^{k+d/2-1}},
\end{equation}
and
\begin{equation}\label{I2estimate}
I_2\asymp \int_{1}^2 \frac{\xi(2-s)^{k}ds}{(\xi+\eta s)(\zeta+\eta s)^{k+d/2-1}}
\asymp \frac{\xi}{(\xi+\eta)(\zeta+\eta)^{k+d/2-1}}.
\end{equation}
In order to estimate $I_1$ we consider two cases. Assume first $d=2$.
\begin{enumerate}
\item[(a)] $\xi\geq\eta$. Then, by \eqref{I1estimate} and the estimates from the proof of Lemma \ref{lem:DunklNewtonEst1} we get
$$
I_1\asymp\int_{0}^1 \frac{s^{k-1}ds}{(\zeta+\eta s)^{k}}\asymp
\frac{1}{(\zeta+\eta)^k}\left(1\vee\log\frac{\eta}{\zeta}\right).
$$
In view of \eqref{I2estimate}, we also have $I_1+I_2\asymp I_1$.
\item[(b)] $\xi<\eta$. Then $\zeta<\xi<\eta$, and by \eqref{I1estimate} we have $I_1\asymp I_1^{(1)}+I_1^{(2)}$, where
\begin{align*}
I_1^{(1)}=&\int_{0}^{\xi/\eta} \frac{\xi s^{k-1}ds}{(\xi+\eta s)(\zeta+\eta s)^{k}}\asymp 
\int_{0}^{\xi/\eta} \frac{s^{k-1}ds}{(\zeta+\eta s)^{k}}\\
=&\eta^{-k}\left(\int_{0}^{\zeta/\eta} \frac{s^{k-1}ds}{(\zeta/\eta+ s)^{k}}+\int_{\zeta/\eta}^{\xi/\eta} \frac{s^{k-1}ds}{(\zeta/\eta+ s)^{k}}\right)\\
\asymp &\eta^{-k}\left((\eta/\zeta)^k\int_{0}^{\zeta/\eta} s^{k-1}ds+\int_{\zeta/\eta}^{\xi/\eta} \frac{ds}{s}\right)\\
=&\eta^{-k}\left(1/k+\log(\xi/\eta)-\log(\zeta/\eta)\right)\\
\asymp &\eta^{-k}\left(1\vee\log(\xi/\zeta)\right)\asymp (\zeta+\eta)^{-k}\left(1\vee\log(\xi/\zeta)\right),
\end{align*}
and
\begin{align*}
I_1^{(2)}=&\int_{\xi/\eta}^1 \frac{\xi s^{k-1}ds}{(\xi+\eta s)(\zeta+\eta s)^{k}}\asymp
\int_{\xi/\eta}^1 \frac{\xi ds}{\eta^{k+1}s^2}=(\eta-\xi)/\eta^{k+1}\\
\leq& \eta^{-k}\asymp(\zeta+\eta)^{-k}\leq(\zeta+\eta)^{-k}\left(1\vee\log(\xi/\zeta)\right).
\end{align*}
Hence
$$
I_1\asymp I_1^{(1)}+I_1^{(2)}\asymp I_1^{(1)}\asymp(\zeta+\eta)^{-k}\left(1\vee\log(\xi/\zeta)\right).
$$
This and \eqref{I2estimate} give $I_1+I_2\asymp I_1$.
\end{enumerate}
Altogether, for $\xi>\zeta$ we have
$$
G_k(x,y)\asymp\frac{1}{(\zeta+\eta)^k}\left[1\vee\log\left(\frac{\eta\wedge\xi}{\zeta}\right)\right],
$$
and \eqref{GreenEasyCase} with $d=2$ otherwise. Hence \eqref{GreenD2right} follows.

It remains to estimate $I_1$ for $d\geq3$.
\begin{enumerate}
\item[(a)] $\xi\geq \eta$. By \eqref{I1estimate} and the estimates from the proof of Lemma \ref{lem:DunklNewtonEst1} we have
$$
I_1\asymp\int_{0}^1 \frac{s^{k-1}ds}{(\zeta+\eta s)^{k+d/2-1}}\asymp
(\zeta+\eta)^{-k}\zeta^{1-d/2}.
$$
Combining this with \eqref{I2estimate} give $I_1+I_2\asymp I_1$.
\item[(b)] $\zeta<\xi<\eta$. By \eqref{I1estimate} we have $I_1\asymp I_1^{(1)}+I_1^{(2)}$, 
where
\begin{align*}
I_1^{(1)}=&\int_{0}^{\xi/\eta} \frac{\xi s^{k-1}ds}{(\xi+\eta s)(\zeta+\eta s)^{k+d/2-1}}\asymp 
\int_{0}^{\xi/\eta} \frac{s^{k-1}ds}{(\zeta+\eta s)^{k+d/2-1}}\\
=&\eta^{1-k-d/2}\left(\int_{0}^{\zeta/\eta} \frac{s^{k-1}ds}{(\zeta/\eta+ s)^{k+d/2-1}}+
\int_{\zeta/\eta}^{\xi/\eta} \frac{s^{k-1}ds}{(\zeta/\eta+ s)^{k+d/2-1}}\right)\\
\asymp &\eta^{1-k-d/2}\left((\eta/\zeta)^{k+d/2-1}\int_{0}^{\zeta/\eta} s^{k-1}ds+
\int_{\zeta/\eta}^{\xi/\eta} s^{-d/2}ds\right)\\
=&\eta^{1-k-d/2}\left(\frac{(\eta/\zeta)^{d/2-1}}{k}+\frac{2}{d-2}
\left[(\eta/\zeta)^{d/2-1}-(\eta/\xi)^{d/2-1}\right]\right)\\
\asymp &\eta^{1-k-d/2}(\eta/\zeta)^{d/2-1}
\asymp (\zeta+\eta)^{-k}\zeta^{1-d/2},
\end{align*}
and
\begin{align*}
I_1^{(2)}=&\int_{\xi/\eta}^1 \frac{\xi s^{k-1}ds}{(\xi+\eta s)(\zeta+\eta s)^{k+d/2-1}}\asymp
\int_{\xi/\eta}^1 \frac{\xi ds}{\eta^{k+d/2}s^{d/2+1}}\\
=& \frac{2\xi}{d\eta^{k+d/2}}\left[(\eta/\xi)^{d/2}-1\right]\leq\eta^{-k}\xi^{1-d/2}\asymp 
(\zeta+\eta)^{-k}\zeta^{1-d/2}.
\end{align*}
Hence
$$
I_1=I_1^{(1)}+I_1^{(2)}\asymp I_1^{(1)}\asymp(\zeta+\eta)^{-k}\zeta^{1-d/2}.
$$
This and \eqref{I2estimate} give $I_1+I_2\asymp I_1$.
\end{enumerate}
Altogether, $G_k(x,y)\asymp(\zeta+\eta)^{-k}\zeta^{1-d/2}$ for $\xi>\zeta$, and \eqref{GreenEasyCase} otherwise. This proves \eqref{GreenD3right}.
\end{proof}

\vspace{5mm}

\begin{lem}\label{lem:GreenEst1left}
The two-sided bound of $G_k(x,y)$ on $\left\{(x,y)\in\BB\times\BB:x_1y_1<0\right\}$ is the following.
\begin{enumerate}
\item[1.] If $d=2$, then
\begin{align}\label{GreenD2left}
G_k(x,y)\asymp\frac{1}{|x-y|^{2k}}&\left(1\wedge\frac{\delta(x)\delta(y)}{|x-y|^{2}}\right)\\
&\times\left[1\vee\log\left(\frac{|x_1y_1|}{\delta(x)\delta(y)\vee|x-\sigma y|^2}\right)\right].\nonumber
\end{align}
\item[2.] If $d=3$, then
\begin{align}\label{GreenD3left}
G_k(x,y)\asymp\frac{1}{|x-y|^{2k+1}}\left(1\wedge\frac{\sqrt{\delta(x)\delta(y)}}{|x-\sigma y|}\right)
\left(1\wedge\frac{\sqrt{\delta(x)\delta(y)}}{|x-y|}\right).
\end{align}
\item[3.] If $d=4$, then
\begin{align}\label{GreenD4left}
G_k(x,y)\asymp\frac{1}{|x-y|^{2k+2}}&\left(1\wedge\frac{\delta(x)\delta(y)}{|x-\sigma y|^2}\right)\\
&\times\left[1\vee\log\left(\frac{|x_1y_1|\wedge\delta(x)\delta(y)}{|x-\sigma y|^2}\right)\right].\nonumber
\end{align}
\item[4.] If $d\geq5$, then
\begin{align}\label{GreenD5left}
G_k(x,y)&\asymp\frac{1}{|x-y|^{2k+2}|x-\sigma y|^{d-4}}\left(1\wedge\frac{\delta(x)\delta(y)}{|x-\sigma y|^2}\right).
\end{align}
\end{enumerate}
\end{lem}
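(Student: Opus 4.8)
The plan is to follow the scheme of the proof of Lemma~\ref{lem:GreenEst1right}, but starting from the \emph{second} equality in \eqref{NormId}, which is the one adapted to the region $x_1y_1<0$. Throughout set $\zeta:=|x-\sigma y|^2$, $\eta:=|x_1y_1|$ and $\xi:=\delta(x)\delta(y)$; since $x_1y_1<0$ one has $\zeta+\eta\asymp|x-y|^2$, as already used in the proof of Lemma~\ref{lem:DunklNewtonEst2}. Inserting the rank one intertwiner \eqref{IntertwinerRank1} into the bound of Theorem~\ref{thm:GreenEst}, using $|x|^2+|y|^2-2\langle x,z\rangle=\zeta+\eta(1+t)$ on the relevant range, and substituting $s=1+t$, one obtains the left-hand analogue of \eqref{Rank1GreenAsymp}:
\begin{equation}\label{plan:R1GreenLeft}
G_k(x,y)\asymp\int_0^2\frac{\xi\,(2-s)^{k-1}s^k\,ds}{\bigl(\xi+\zeta+\eta s\bigr)\bigl(\zeta+\eta s\bigr)^{k+d/2-1}}.
\end{equation}

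\emph{The regime $\xi\leq\zeta$.} Here $\xi+\zeta+\eta s\asymp\zeta+\eta s$, so \eqref{plan:R1GreenLeft} reduces, up to constants, to $\xi$ times the integral in \eqref{NewtonPotentialRank1Left} with $d$ replaced by $d+2$. I would read off its size from Lemma~\ref{lem:DunklNewtonEst2} in dimension $d+2$: this is \eqref{DunklNewtonD4left} when $d=2$ (so $d+2=4$) and \eqref{DunklNewtonD5left} when $d\geq3$ (so $d+2\geq5$). Multiplying by $\xi$ and using $\xi\leq\zeta\leq\zeta+\eta\asymp|x-y|^2$ — which forces the minima and logarithms in \eqref{GreenD2left}--\eqref{GreenD5left} onto their nontrivial branch (and makes $\xi\vee\zeta=\zeta$, $\eta\wedge\xi\leq\zeta$) — gives exactly the asserted bounds in this regime.

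\emph{The regime $\xi>\zeta$.} Now $\xi+\zeta+\eta s\asymp\xi+\eta s$, and as in the proof of Lemma~\ref{lem:GreenEst1right} I would split \eqref{plan:R1GreenLeft} as $G_k(x,y)\asymp I_1+I_2$, where
$$
I_1\asymp\int_0^1\frac{\xi\,s^k\,ds}{(\xi+\eta s)(\zeta+\eta s)^{k+d/2-1}},\qquad
I_2\asymp\frac{\xi}{(\xi+\eta)(\zeta+\eta)^{k+d/2-1}},
$$
the second bound being immediate since $2-s\asymp1$ on $[1,2]$. For $I_1$ I would distinguish two subcases. If $\xi\geq\eta$, then $\xi+\eta s\asymp\xi$ on $[0,1]$, so $I_1\asymp\int_0^1 s^k(\zeta+\eta s)^{1-k-d/2}\,ds$, which is exactly the integral $I_1$ already evaluated in the proof of Lemma~\ref{lem:DunklNewtonEst2}, yielding $(\zeta+\eta)^{1-k-d/2}$ for $2\leq d\leq3$, $(\zeta+\eta)^{-k-1}[1\vee\log(\eta/\zeta)]$ for $d=4$, and $(\zeta+\eta)^{-k-1}\zeta^{2-d/2}$ for $d\geq5$; moreover $I_2\asymp(\zeta+\eta)^{1-k-d/2}$ is then dominated by $I_1$. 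If $\zeta<\xi<\eta$, I would split the $s$-integral in $I_1$ at $s=\xi/\eta$ (where $\xi+\eta s$ turns over) into $I_1^{(1)}+I_1^{(2)}$, and split $I_1^{(1)}$ once more at $s=\zeta/\eta<\xi/\eta$ (where $\zeta+\eta s$ turns over); the three resulting pieces are explicit power/logarithmic integrals, and the bookkeeping gives $I_1\asymp I_1^{(1)}$ equal to $\xi\,\eta^{-k-1}[1\vee\log(\eta/\xi)]$ for $d=2$, $\xi^{1/2}\eta^{-k-1}$ for $d=3$, $\eta^{-k-1}[1\vee\log(\xi/\zeta)]$ for $d=4$, and $\zeta^{2-d/2}\eta^{-k-1}$ for $d\geq5$, with $I_2$ again smaller. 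Using $\eta\asymp\zeta+\eta\asymp|x-y|^2$ here (valid because $\eta>\xi>\zeta$) rewrites these in the form of \eqref{GreenD2left}--\eqref{GreenD5left}; finally, merging the three regimes, with $\xi\vee\zeta$ equal to $\zeta$ resp.\ $\xi$ and $\eta\wedge\xi$ equal to $\eta$ resp.\ $\xi$, reassembles the stated closed-form bounds.

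The individual computations are all elementary; the main obstacle is the bookkeeping in the subcase $\zeta<\xi<\eta$ — keeping track of which of $\zeta,\xi,\eta$ controls each factor on each subinterval, checking that $I_1^{(2)}$ and $I_2$ are always absorbed by $I_1^{(1)}$, and in particular verifying that the logarithmic pieces produced for $d=2$ and $d=4$ carry exactly the arguments $|x_1y_1|/(\delta(x)\delta(y)\vee|x-\sigma y|^2)$ and $(|x_1y_1|\wedge\delta(x)\delta(y))/|x-\sigma y|^2$ of \eqref{GreenD2left} and \eqref{GreenD4left} once the three regimes are combined.
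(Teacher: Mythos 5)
Your proposal reproduces the paper's proof essentially step for step: the same substitutions $\zeta=|x-\sigma y|^2$, $\eta=|x_1y_1|$, $\xi=\delta(x)\delta(y)$, the same starting integral (the paper's \eqref{Rank1GreenAsympLeft}), the same dichotomy $\xi\leq\zeta$ (absorbing the $\xi$-factor and reading off Lemma~\ref{lem:DunklNewtonEst2} in dimension $d+2$) versus $\xi>\zeta$ (splitting into $I_1+I_2$, then $\xi\geq\eta$ versus $\zeta<\xi<\eta$, with the further split of $I_1$ at $s=\xi/\eta$ and $s=\zeta/\eta$), and your stated values for $I_1$ in each subcase agree with the paper's. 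The only difference is that you leave the elementary integral evaluations in the subcase $\zeta<\xi<\eta$ as bookkeeping to be carried out, which is exactly what the paper then does in detail.
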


\vspace{5mm}

\begin{proof}
Denote $\zeta=|x-\sigma y|^{2}$, $\eta=|x_1y_1|$, and $\xi=\delta(x)\delta(y)$. By Theorem \ref{thm:GreenEst}, \eqref{IntertwinerRank1}, and \eqref{NormId} we have
\begin{equation}\label{Rank1GreenAsympLeft}
G_k(x,y)\asymp\int_{-1}^1 \frac{\xi(1-t)^{k-1}(1+t)^kdt}{(\xi+\zeta+\eta(1+t))(\zeta+\eta(1+t))^{k+d/2-1}}.
\end{equation}
Assume first $\xi\leq\zeta$. Then by \eqref{Rank1GreenAsympLeft},
$$
G_k(x,y)\asymp\int_{-1}^1 \frac{\xi(1-t)^{k-1}(1+t)^kdt}{(\zeta+\eta(1+t))^{k+d/2}}.
$$
Let $d=2$. Using the estimate derived for \eqref{NewtonPotentialRank1Left} with $d'=4$ instead of $d$, we get by \eqref{DunklNewtonD4left} that
\begin{equation}\label{GreenEasyCaseLeft1}
G_k(x,y)\asymp\frac{\xi}{(\zeta+\eta)^{k+1}}
\left(1\vee\log\frac{\eta}{\zeta}\right).
\end{equation}
If $d\geq3$, then \eqref{DunklNewtonD5left} with $d'=d+2$ instead of $d$ gives
\begin{equation}\label{GreenEasyCaseLeft2}
G_k(x,y)\asymp\frac{\xi}{(\zeta+\eta)^{k+1}\zeta^{d/2-1}}.
\end{equation}
Assume $\xi>\zeta$. Using \eqref{Rank1GreenAsympLeft} and substituting $s=t+1$ we get
$$
G_k(x,y)\asymp
\int_{0}^2 \frac{\xi(2-s)^{k-1}s^kds}{(\xi+\eta s)(\zeta+\eta s)^{k+d/2-1}}=I_1+I_2,
$$
where
\begin{equation}\label{I1estimateLeft}
I_1\asymp\int_{0}^1 \frac{\xi s^{k}ds}{(\xi+\eta s)(\zeta+\eta s)^{k+d/2-1}},
\end{equation}
and
\begin{equation}\label{I2estimateLeft}
I_2\asymp\int_{1}^2 \frac{\xi(2-s)^{k-1}ds}{(\xi+\eta s)(\zeta+\eta s)^{k+d/2-1}}\asymp \frac{\xi}{(\xi+\eta)(\zeta+\eta)^{k+d/2-1}}.
\end{equation}
In order to estimate $I_1$ we need to consider several cases. 
\begin{enumerate}
\item[(a)] $\xi\geq\eta$. Then the estimate depends on the dimension as follows.
\begin{itemize}
\item[(i)] $2\leq d\leq3$. \eqref{I1estimateLeft} and the estimates derived in the proof of Lemma \ref{lem:DunklNewtonEst2} give
$$
I_1\asymp\int_{0}^1 \frac{s^{k}ds}{(\zeta+\eta s)^{k+d/2-1}}\asymp
\frac{1}{(\zeta+\eta)^{k+d/2-1}}.
$$
In view of \eqref{I2estimateLeft}, we also have $I_1+I_2\asymp I_1$.
\item[(ii)] $d=4$. The same arguments as above give
\begin{equation}
I_1\asymp\int_{0}^1 \frac{s^{k}ds}{(\zeta+\eta s)^{k+1}}\asymp\frac{1}{(\zeta+\eta)^{k+1}}\left(1\vee\log\frac{\eta}{\zeta}\right),\nonumber
\end{equation}
and $I_1+I_2\asymp I_1$.
\item[(iii)] $d\geq5$. We get
$$
I_1+I_2\asymp I_1\asymp\frac{1}{(\zeta+\eta)^{k+1}\zeta^{d/2-2}}.
$$
\end{itemize}
\item[(b)] $\xi<\eta$. Then $\zeta<\xi<\eta$.
By \eqref{I1estimateLeft}, for any $d\geq2$ we have $I_1\asymp I_1^{(1)}+I_1^{(2)}$, where
\begin{align}
I_1^{(1)}&=\int_{0}^{\xi/\eta} \frac{\xi s^{k}ds}{(\xi+\eta s)(\zeta+\eta s)^{k+d/2-1}}\asymp
\int_{0}^{\xi/\eta}\frac{s^{k}ds}{(\zeta+\eta s)^{k+d/2-1}}\nonumber\\
&=\eta^{1-k-d/2}\left(\int_{0}^{\zeta/\eta}\frac{s^{k}ds}{(\zeta/\eta+ s)^{k+d/2-1}}+
\int_{\zeta/\eta}^{\xi/\eta}\frac{s^{k}ds}{(\zeta/\eta+ s)^{k+d/2-1}}\right)\nonumber\\
&\asymp\eta^{1-k-d/2}\left((\eta/\zeta)^{k+d/2-1}\int_{0}^{\zeta/\eta}s^{k}ds+
\int_{\zeta/\eta}^{\xi/\eta}s^{1-d/2}ds\right),\label{I11estimate}
\end{align}
and 
\begin{align}\label{I12estimate}
I_1^{(2)}&=\int_{\xi/\eta}^1 \frac{\xi s^{k}ds}{(\xi+\eta s)(\zeta+\eta s)^{k+d/2-1}}\asymp
\frac{\xi}{\eta^{k+d/2}}\int_{\xi/\eta}^1s^{-d/2}ds.
\end{align}
At this point we need to consider different values of $d$ separately.
\begin{itemize}
\item[(i)] $d=2$. By \eqref{I11estimate} we have
$$
I_1^{(1)}\asymp\eta^{-k}\left(\frac{\zeta}{(k+1)\eta}+\frac{\xi-\zeta}{\eta}\right)\asymp\frac{\xi}{\eta^{k+1}},
$$
and by \eqref{I12estimate},
$$
I_1^{(2)}\asymp\frac{\xi}{\eta^{k+1}}\log\frac{\eta}{\xi}.
$$
Therefore,
$$
I_1\asymp I_1^{(1)}+I_1^{(2)}\asymp\frac{\xi}{\eta^{k+1}}\left(1\vee\log\frac{\eta}{\xi}\right).
$$
Since $\zeta<\xi<\eta$, the last estimate of $I_1$ and \eqref{I2estimateLeft} give
$$
I_1+I_2\asymp I_1\asymp\frac{\xi}{(\zeta+\eta)^{k+1}}\left(1\vee\log\frac{\eta}{\xi}\right).
$$
Combining this with (a)(i) and \eqref{GreenEasyCaseLeft1} we get \eqref{GreenD2left}.
\item[(ii)] $d=3$. Then \eqref{I11estimate} gives
\begin{align*}
I_1^{(1)}&\asymp\frac{1}{\eta^{k+1/2}}\left[\frac{1}{k+1}\sqrt{\zeta/\eta}+2\left(\sqrt{\xi/\eta}-\sqrt{\zeta/\eta}\right)\right]\asymp\frac{\sqrt{\xi}}{\eta^{k+1}},
\end{align*}
and by \eqref{I12estimate},
$$
I_1^{(2)}\asymp\frac{\xi}{\eta^{k+3/2}}\left(\sqrt{\eta/\xi}-1\right)\leq\frac{\sqrt{\xi}}{\eta^{k+1}}\asymp\frac{\sqrt{\xi}}{(\zeta+\eta)^{k+1}}.
$$
Therefore, $I_1\asymp I_1^{(1)}+I_1^{(2)}\asymp I_1^{(1)}$. Furthermore, by \eqref{I2estimateLeft} we have
\begin{align*}
I_2\leq\left(\frac{\xi}{\xi+\eta}\right)^{1/2}\frac{1}{(\zeta+\eta)^{k+1/2}}\leq\frac{\sqrt{\xi}}{(\zeta+\eta)^{k+1}}.
\end{align*}
Hence 
$$
I_1+I_2\asymp I_1\asymp\frac{\sqrt{\xi}}{(\zeta+\eta)^{k+1}}.
$$
This, (a)(i) and \eqref{GreenEasyCaseLeft2} give \eqref{GreenD3left}.
\item[(iii)] $d=4$. By \eqref{I11estimate},
\begin{align*}
I_1^{(1)}\asymp\eta^{-k-1}\left(\frac{1}{k+1}+\log\frac{\xi}{\zeta}\right)\asymp
\frac{1}{(\zeta+\eta)^{k+1}}\left(1\vee\log\frac{\xi}{\zeta}\right),
\end{align*}
and by \eqref{I12estimate},
$$
I_1^{(2)}\asymp\frac{\xi}{\eta^{k+2}}\left(\frac{\eta}{\xi}-1\right)\leq\frac{1}{\eta^{k+1}}\asymp\frac{1}{(\zeta+\eta)^{k+1}}.
$$
Hence $I_1\asymp I_1^{(1)}+I_1^{(2)}\asymp I_1^{(1)}$. Combining this with \eqref{I2estimateLeft} we get 
$$
I_1+I_2\asymp I_1\asymp\frac{1}{(\zeta+\eta)^{k+1}}\left(1\vee\log\frac{\xi}{\zeta}\right).
$$
The last estimate, (a)(ii) and \eqref{GreenEasyCaseLeft2} give \eqref{GreenD4left}.
\item[(iv)] $d\geq5$. By \eqref{I11estimate},
\begin{align*}
I_1^{(1)}\asymp\frac{1}{\eta^{k+d/2-1}}\left[2\left(\frac{\eta}{\zeta}\right)^{d/2-2}-\left(\frac{\eta}{\xi}\right)^{d/2-2}\right]\asymp\frac{1}{\eta^{k+1}\zeta^{d/2-2}},
\end{align*}
and by \eqref{I12estimate},
$$
I_1^{(2)}\asymp\frac{\xi}{\eta^{k+d/2}}\left[\left(\frac{\eta}{\xi}\right)^{d/2-1}-1\right]\leq\frac{1}{\eta^{k+1}\xi^{d/2-2}}\leq\frac{1}{\eta^{k+1}\zeta^{d/2-2}}.
$$
It follows that $I_1\asymp I_1^{(1)}+I_1^{(2)}\asymp I_1^{(1)}$. Furthermore, by \eqref{I2estimateLeft},
$$
I_2\leq\frac{1}{(\zeta+\eta)^{k+d/2-1}}\leq\frac{1}{(\zeta+\eta)^{k+1}\zeta^{d/2-2}}.
$$
Hence
$$
I_1+I_2\asymp I_1\asymp\frac{1}{\eta^{k+1}\zeta^{d/2-2}}\asymp\frac{1}{(\zeta+\eta)^{k+1}\zeta^{d/2-2}}.
$$
The same estimate holds also in (a)(iii). Combining this with \eqref{GreenEasyCaseLeft2} we obtain \eqref{GreenD5left}.
\end{itemize}
\end{enumerate}
\end{proof}

By \eqref{PoissonRoesler} and \eqref{IntertwinerRank1}, the Poisson kernel in the rank one case in $\RR^d$ can be written as
\begin{equation}\label{eq:PoissonRank1}
P_k(x,y)=c_k \int_{-1}^1 \frac{(1-|x|^2)(1-t)^{k-1}(1+t)^k
dt}{(|x|^2+1-2(tx_1y_1+x_2y_2+...+x_dy_d))^{k+d/2}}.
\end{equation}
As a consequence of the two-sided bounds of the Newton kernel obtained in Theorem~\ref{thm:NewtonEst} we get the following two-sided estimates of $P_k(x,y)$.

\begin{cor}\label{thm:Poisson}
Let $\Phi(x,y):=|x-y|\vee|x-\sigma y|$. The two-sided bound of $P_k(x,y)$ on $\BB\times\s$ is the following.
\begin{enumerate}
\item[1.] If $d=2$, then
\begin{equation}\label{est:Poissond2}
P_k(x,y)\asymp \frac{1-|x|^2}{\Phi(x,y)^{2k}|x-y|^2}\left[1\vee\log\left(\frac{|x_1y_1|}{|x-\sigma y|^2}\right)\right].
\end{equation}
\item[2.] If $d\geq3$, then
\begin{equation}\label{est:Poissond3}
P_k(x,y)\asymp \frac{1-|x|^2}{\Phi(x,y)^{2k}|x-y|^2(|x-y|\wedge|x-\sigma y|)^{d-2}}.
\end{equation}
\end{enumerate}
\end{cor}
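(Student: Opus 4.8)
The idea is that, in rank one, the Poisson kernel is — up to the factor $1-|x|^2$ and a positive constant — simply the Newton kernel with the dimension shifted by $2$. Since $y\in\s$ we have $|y|=1$, so by \eqref{NormId} the denominator in \eqref{eq:PoissonRank1} equals $|x-y|^2+2x_1y_1(1-t)=|x-\sigma y|^2-2x_1y_1(1+t)$, and pulling $1-|x|^2$ out of the integral gives
\[
P_k(x,y)=(1-|x|^2)\,c_k\int_{-1}^1\frac{(1-t)^{k-1}(1+t)^k\,dt}{\bigl(|x-y|^2+2x_1y_1(1-t)\bigr)^{k+d/2}}.
\]
Comparing the integral on the right with \eqref{NewtonPotentialRank1right} (when $x_1y_1\ge0$) and with \eqref{NewtonPotentialRank1Left} (when $x_1y_1<0$), one sees that it is, up to a positive constant depending on $k$ and $d$, the very integral estimated in Lemma~\ref{lem:DunklNewtonEst1} and Lemma~\ref{lem:DunklNewtonEst2}, except that the exponent $k+d/2-1$ is replaced by $k+d/2$; equivalently, its two-sided bounds may be read off from those lemmas with $d$ replaced by $d':=d+2$. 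The hypothesis $d'+2k>2$ is automatic for $d'=d+2\ge4$, so this substitution is legitimate.

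I would then assemble the estimate from the two sign regimes, using the elementary identity $|x-\sigma y|^2-|x-y|^2=4x_1y_1$, which shows that $\Phi(x,y)=|x-\sigma y|$ and $|x-y|\wedge|x-\sigma y|=|x-y|$ when $x_1y_1\ge0$, while $\Phi(x,y)=|x-y|$ and $|x-y|\wedge|x-\sigma y|=|x-\sigma y|$ when $x_1y_1<0$. On $\{x_1y_1\ge0\}$, Lemma~\ref{lem:DunklNewtonEst1} (case $d'\ge3$) gives that the integral is $\asymp|x-\sigma y|^{-2k}|x-y|^{-d}$, which in the notation of the corollary reads $\asymp\Phi(x,y)^{-2k}|x-y|^{-2}(|x-y|\wedge|x-\sigma y|)^{2-d}$. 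On $\{x_1y_1<0\}$, Lemma~\ref{lem:DunklNewtonEst2} gives $\asymp|x-y|^{-2k-2}[1\vee\log(|x_1y_1|/|x-\sigma y|^2)]$ when $d=2$ (so $d'=4$) and $\asymp|x-y|^{-2k-2}|x-\sigma y|^{-(d-2)}$ when $d\ge3$ (so $d'\ge5$); rewriting $|x-y|=\Phi(x,y)$ and $|x-\sigma y|=|x-y|\wedge|x-\sigma y|$ again puts these in the form of the corollary. Multiplying through by $1-|x|^2$ then yields \eqref{est:Poissond2} and \eqref{est:Poissond3}.

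The only point needing a word of justification is the logarithmic factor in \eqref{est:Poissond2}: on $\{x_1y_1\ge0\}$ the bound obtained above carries no logarithm, whereas \eqref{est:Poissond2} displays $[1\vee\log(|x_1y_1|/|x-\sigma y|^2)]$. This is harmless, because on that set $|x-\sigma y|^2\ge(x_1+y_1)^2\ge4x_1y_1=4|x_1y_1|$, so $|x_1y_1|/|x-\sigma y|^2\le1/4$ and the factor is $\asymp1$ there; hence the formula in \eqref{est:Poissond2} is valid in both regimes. For $d\ge3$ no logarithm appears in either regime and the merging is immediate. This bookkeeping of $|x-y|$, $|x-\sigma y|$, $\Phi$ and $|x-y|\wedge|x-\sigma y|$ across the two cases is the only mildly delicate part of the argument; the analytic content is entirely contained in Theorem~\ref{thm:NewtonEst}.
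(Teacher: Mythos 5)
Your proof is correct and follows exactly the paper's route: it observes that the Poisson kernel integral \eqref{eq:PoissonRank1} coincides (after pulling out $1-|x|^2$) with the Newton kernel integral in dimension $d'=d+2$, and then reads off the bounds from Theorem~\ref{thm:NewtonEst}. The extra bookkeeping you include (verifying that $\Phi$ and $|x-y|\wedge|x-\sigma y|$ identify correctly in the two sign regimes, and that the logarithmic factor is $\asymp1$ when $x_1y_1\ge0$) is a useful sanity check but is already built into Theorem~\ref{thm:NewtonEst}, which the paper cites directly.
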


\begin{proof}
In view of the formulas \eqref{DunklNewtonR1} and \eqref{eq:PoissonRank1}, we can apply Theorem \ref{thm:NewtonEst} with $d'=d+2$ instead of $d$. Hence, \eqref{est:Poissond2} follows from \eqref{est:NewtonD4}, and \eqref{est:Poissond3} follows from \eqref{est:NewtonD5}.
\end{proof}

\begin{remark}
\rm
When $d=1$, the condition $k>1/2$ guarantees that $N_k(x,y)$ is well defined and finite, and hence also $G_k(x,y)$ and $P_k(x,y)$. Using the methods of this section one can derive the following two-sided estimates.
\begin{align*}
N_k(x,y)&\asymp(|x|+|y|)^{1-2k},\quad x,y\in\RR,\\
G_k(x,y)&\asymp\frac{\sqrt{\delta(x)\delta(y)}}{(|x|+|y|)^{2k-1}}\left(1\wedge\frac{\sqrt{\delta(x)\delta(y)}}{|x-y|}\right),\quad x,y\in(-1,1),\\
P_k(x,y)&\asymp 1-|x|,\quad x\in(-1,1),y\in\{-1,1\}.
\end{align*}
\end{remark}


\begin{remark}
\rm 
It is noteworthy that the explicit formulas for $N_k(x,y)$, $G_k(x,y)$ and $P_k(x,y)$ can be obtained in some particular cases, e.g., for $k\in\NN$ and $d\in 2\NN$ the integrands in formulas \eqref{DunklNewtonR1} and \eqref{eq:PoissonRank1} are  rational functions of $t$.
For instance, when $k=1$ and $d=2$ (i.e. for the root system $A_1$ in $\RR^2$), we can derive the following explicit expressions
\begin{align}\label{N1P1G1}
N_1(x,y)&=\frac{1}{4\pi}\left[\frac{|x-\sigma y|^2}{2x_1^2y_1^2}\log\left(\frac{|x-\sigma y|}{|x-y|}\right)-\frac{1}{x_1y_1}\right],\\
P_1(x,y)&=\frac{1-|x|^2}{4x_1^2y_1^2}\left[\frac{2x_1y_1}{|x-y|^2}
+\log\left(\frac{|x-y|}{|x-\sigma y|}\right)\right],\nonumber\\
G_1(x,y)&=\frac{|x-\sigma y|^2}{8\pi x_1^2y_1^2}\log\left(\frac{|x-\sigma y|}{|x-y|}\right)-\frac{|x|^2|x^*-\sigma y|^2}{8\pi x_1^2y_1^2}\log\left(\frac{|x^*-\sigma y|}{|x^*-y|}\right).\nonumber
\end{align}
\end{remark}

\begin{remark}{\bf $W$-radial case and applications to the Dyson Brownian Motion.}
\rm
The results of this paper can be applied to the $W$-invariant part of the Dunkl Laplacian,
$$\Delta^W_kf(x) = \Delta f(x) + \sum_{\alpha\in R_+} k(\alpha)\frac{\partial_\alpha f(x)}{\langle \alpha, x\rangle}.$$
Notice that for $k=1$ and $W=S_{d-1}$ this is just the generator of the $d$-dimensional Dyson Brownian motion.
 In fact, for all integral kernels $K(x,y)$ for $\Delta_k$ considered in the paper, the following formula holds
\begin{equation}\label{eq:radial}
K^W(x,y)=\sum_{g\in W}K(x,gy),
\end{equation}
where $K^W$ is the corresponding kernel for the operator  $\Delta^W_k$.\\
In the rank one case with $k=1$ and $d=2$, formulas \eqref{N1P1G1} and \eqref{eq:radial} give
\begin{align*}
N^W_1(x,y)&=\frac{1}{2\pi x_1y_1}\log\left(\frac{|x-\sigma y|}{|x-y|}\right),\\
P^W_1(x,y)&=\frac{2(1-|x|^2)}{|x-y|^2|x-\sigma y|^2},\nonumber\\
G^W_1(x,y)&=\frac{1}{2\pi x_1y_1}\log\left(\frac{|x^*-y||x-\sigma y|}{|x-y||x^*-\sigma y|}\right). \nonumber
\end{align*}
Furthermore, by multiplying the above formulas by $\omega_1(y)=y_1^2$ and going back to the initial form $A_1=\{\pm(e_1-e_2)\}$ with the standard basis vectors $e_1$, $e_2$ one obtains the Newton kernel, Poisson kernel and Green function of the unit ball in the setting of the potential theory of 2-dimensional Dyson Brownian motion:
\begin{align*}\label{Dyson}
N^{Dys}_1(x,y)&=\frac{1}{2\pi}
\frac{ \pi(y)}{\pi(x)}\log\left(\frac{|x-\sigma_\alpha y|}{|x-y|}\right),\\
P^{Dys}_1(x,y)&=\frac{2\pi(y)^2(1-|x|^2)}{|x-y|^2|x-\sigma_\alpha y|^2}, \nonumber\\
G^{Dys}_1(x,y)&=\frac{1}{2\pi}
\frac{ \pi(y)}{\pi(x)}
\log\left(\frac{|x^*-y||x-\sigma_\alpha y|}{|x-y||x^*-\sigma_\alpha y|}\right), \nonumber
\end{align*}
where  $x,y$ are in the positive Weyl chamber $ C^+=\{(z_1,z_2):\ z_1>z_2\}$,  $\pi(z)=z_1-z_2$, and $\sigma_\alpha(z_1,z_2)=(z_2,z_1)$.
\end{remark}

%



\end{document}